\theoremstyle{plain}
\newtheorem{theorem}{Theorem}[section]
\newtheorem{proposition}[theorem]{Proposition} 
\newtheorem{lemma}[theorem]{Lemma} 
\newtheorem{corollary}[theorem]{Corollary} 
\theoremstyle{definition}
\newtheorem{definition}[theorem]{Definition} 
\newtheorem{question}[theorem]{Question} 
\newtheorem{example}[theorem]{Example} 
\theoremstyle{remark}
\newtheorem{remark}[theorem]{Remark} 
\numberwithin{equation}{section}
\DeclareMathOperator{\sgn}{sgn}
\begin{document}

\title[A volume  correspondence]{A volume correspondence between anti-de Sitter space and its boundary}

\author{Lizhao Zhang}
\email{lizhaozhang@alum.mit.edu}

\subjclass[2020]{51M10, 51F99, 51M25, 52B11.}

\keywords{double anti-de Sitter space, double hyperbolic space, boundary at infinity, polytope, inversion.}

\date{}

\begin{abstract}
Let $\mathbb{H}^{n+1}_1$ be the $(n+1)$-dimensional anti-de Sitter space (AdS),
in this paper we propose to extend $\mathbb{H}^{n+1}_1$ conformally to another copy of 
$\mathbb{H}^{n+1}_1$ by gluing them along the boundary at infinity,
and denote the resulting space by \emph{double anti-de Sitter space} $\mathbb{DH}^{n+1}_1$.
We propose to introduce a volume $V_{n+1}(P)$ (possibly complex valued) on polytopes $P$ in $\mathbb{DH}^{n+1}_1$
whose facets all have non-degenerate metrics (called \emph{good} polytopes), 
and show that it is well defined and invariant under isometry,
including the case that $P$ contains a non-trivial portion of $\partial\mathbb{H}^{n+1}_1$.
For $n$ even, $V_{n+1}(P)$ is shown to be completely
determined by the intersection of $P$ and $\partial\mathbb{H}^{n+1}_1$,
which leads to the following important applications:
it induces a new intrinsic (conformal) \emph{volume} on good polytopes in $\partial\mathbb{H}^{n+1}_1$
that is invariant under conformal transformations of $\partial\mathbb{H}^{n+1}_1$,
and establishes an AdS-CFT type correspondence between the volumes on 
$\mathbb{DH}^{n+1}_1$ and $\partial\mathbb{H}^{n+1}_1$.
\end{abstract}

\maketitle

\section{Introduction}
\label{section_intro}

The purpose of this note is twofold. First, it is about connections between two seemingly far-removed subjects, 
the \emph{anti-de Sitter space} and \emph{polytope}. 
While polytope can be defined in the anti-de Sitter space, 
namely a finite intersection of half-spaces and possibly unbounded,
it is rarely the focus in the study of the anti-de Sitter space,
where generally more ``smooth'' tools like differential geometry are used.
However we show that polytopes (especially the unbounded ones)
are essential to study the boundary at infinity,
particularly for the topic that we are concerned with in this paper, the volume.
Second, the study is integrated with a new space that we are about to introduce later,
the \emph{double anti-de Sitter space}, which is constructed by gluing two copies of
the anti-de Sitter space along their boundaries.
We show that this newly introduced space is also of interest in its own right.

\subsection{Background and motivations}
\label{section_background}

We first introduce the necessary background that motivates this paper.
Let $\mathbb{H}^n$ be the $n$-dimensional hyperbolic space,
using the \emph{hyperboloid model},
it was shown in Zhang~\cite{Zhang:double_hyperbolic} that one can extend $\mathbb{H}^n$ conformally
to a \emph{two}-sheeted hyperboloid by identifying their boundaries at infinity projectively,
with the resulting space homeomorphic to $\mathbb{S}^n$ and denoted by 
\emph{double hyperbolic space} $\mathbb{DH}^n$ (Section~\ref{section_polytope_volume_DH}).
As one of the most crucial features for the construction of $\mathbb{DH}^n$,
the lower sheet (denoted by $\mathbb{H}^n_{-}$) is not isometric to $\mathbb{H}^n$,
and the length element $ds$ on $\mathbb{H}^n_{-}$ is the negative of the length element $ds$ on $\mathbb{H}^n$.
To compute the geodesic between two points in $\mathbb{H}^n$ and $\mathbb{H}^n_{-}$ respectively
across $\partial\mathbb{H}^n$,
it is analogous to integrating $1/x$ in $\mathbb{R}$ from the negative to the positive across the origin,
where it is not integrable by the standard Lebesgue integral,
but complex analysis can be brought in to make sense of the integration.
Conversely, this also suggests that the role of $\mathbb{H}^n_{-}$ 
cannot be replaced with an \emph{exact} copy of $\mathbb{H}^n$.

A \emph{half-space} in $\mathbb{DH}^n$ is obtained by gluing a half-space in $\mathbb{H}^n$ 
and its antipodal image in $\mathbb{H}^n_{-}$ along $\partial\mathbb{H}^n$ 
by identifying their \emph{opposite} ends projectively.
Though it seems strange that a half-space in $\mathbb{DH}^n$ appears to be on both sides 
of a hyperplane when using the hyperboloid model,
this apparent paradox can be resolved by the fact that $\mathbb{H}^n$ and $\mathbb{H}^n_{-}$
are embedded in two different Minkowski spaces respectively. 
The construction of both $\mathbb{DH}^n$ and its half-spaces, at first sight, 
perhaps looks somewhat unconventional.
But by using the \emph{upper half-space model} and the \emph{hemisphere model} of $\mathbb{H}^n$,
the construction of $\mathbb{DH}^n$ naturally extends both of them across the boundary at $x_0=0$
conformally to the \emph{lower} half-space and the \emph{lower} hemisphere respectively, 
making them more like a ``full-space model'' and a ``full-sphere model'' in some sense.
In the upper half-space model (for the full $\mathbb{DH}^n$), 
the corresponding half-space of $\mathbb{DH}^n$ is either the inside or the outside of a ball centered on $x_0=0$,
or a Euclidean half-space whose face is vertical to $x_0=0$, which appears much more natural than in the hyperboloid model.

A polytope in $\mathbb{H}^n$, possibly unbounded,
is a finite intersection of half-spaces in $\mathbb{H}^n$.
For an overview of polytopes, see Ziegler~\cite{Ziegler:polytopes}.
Similarly, a \emph{polytope} in $\mathbb{DH}^n$ is a finite intersection of half-spaces in $\mathbb{DH}^n$.
It is always symmetric between $\mathbb{H}^n$ and $\mathbb{H}^n_{-}$ through antipodal points,
but may not be homeomorphic to a ball and may possibly contain more than one connected component.
In fact, because of the inclusion of $\mathbb{H}^n_{-}$, 
there are no such notion of \emph{convex} polytope in $\mathbb{DH}^n$.
For any polytope $P$ in $\mathbb{DH}^n$, a volume $V_n(P)$ is introduced,
including the case that the polytope contains a non-trivial portion of $\partial\mathbb{H}^n$.
We remark that the choice to extend the volume on $\mathbb{H}^n$ to $\mathbb{DH}^n$ is not unique,
just like the integration of $1/x$ in $\mathbb{R}$ from the negative to the positive is not unique,
but with a proper choice, the volume is shown to be well defined and invariant under isometry.
The total volume of $\mathbb{DH}^n$ is
\begin{equation}
\label{equation_DH_total_volume}
V_n(\mathbb{DH}^n)=i^n V_n(\mathbb{S}^n)
\end{equation}
for both even and odd dimensions \cite{Zhang:double_hyperbolic},
where $V_n(\mathbb{S}^n)$ is the $n$-dimensional volume of the standard unit $n$-sphere $\mathbb{S}^n$.

Particularly for $n$ odd, the volume $V_n(P)$ is shown to be completely determined 
by the intersection of $P$ and $\partial\mathbb{H}^n$. As an important application,
it induces a new intrinsic (conformal) \emph{volume} on polytopes in $\partial\mathbb{H}^n$
(or more precisely, a real-valued finitely additive measure on $\partial\mathbb{H}^n$,
but with the values not necessarily non-negative)
that is invariant under M\"{o}bius transformations,
i.e., global conformal transformations of $\partial\mathbb{H}^n$ induced by the isometries of $\mathbb{H}^n$.
This is a very strong property for $\partial\mathbb{H}^n$, as most known volumes 
(e.g., the round metric on a sphere) do not have this conformal invariance property.
The volume on $\partial\mathbb{H}^n$ comes from an entirely different mechanism
than the ``usual'' Riemannian metric, and is not induced by any volume form on a differentiable manifold.
In fact, the discovery of the volume on $\partial\mathbb{H}^n$ for $n$ odd is largely due to the fact
that the volume on $\mathbb{DH}^n$ is also introduced on polytopes in the first place.
Whether the volume on $\partial\mathbb{H}^n$ can be defined on a larger class of regions
remains of interest for future research.
We note that $\partial\mathbb{H}^n$ has a much larger class of polytopes than the sphere with round metric,
and by the volume on polytopes $G$ in $\partial\mathbb{H}^n$ (denoted by $V_{\infty,n-1}(G)$),
it unveils that $\partial\mathbb{H}^n$ has hidden geometric properties of the spherical, 
(double) hyperbolic, and Euclidean spaces at the same time \cite[Theorem~12.9]{Zhang:double_hyperbolic}.
Namely, let $n=2m+1$, then for any polytope $G$ with \emph{finite} volume in $M^{2m}$
($\mathbb{S}^{2m}$, $\mathbb{H}^{2m}$, $\mathbb{DH}^{2m}$, or $\mathbb{R}^{2m}$) 
with constant curvature $\kappa$, 
not only $G$ can be treated as a polytope in $\partial\mathbb{H}^{2m+1}$ 
when $M^{2m}$ is treated conformally as $\partial\mathbb{H}^{2m+1}$
(for $\mathbb{H}^{2m}$, treated as ``half'' of $\partial\mathbb{H}^{2m+1}$;
for $\mathbb{R}^{2m}$, treated as $\partial\mathbb{H}^{2m+1}$ with a point removed),
we also have
\begin{equation}
\label{equation_polytope_volume_infinity}
V_{\infty,2m}(G)=\kappa^m V_{2m}(G).
\end{equation}
But somewhat surprisingly, this identity is not true if $G$ is not a polytope in $M^{2m}$.

\subsection{Double anti-de Sitter space}

In this note we show that a similar theory can be developed 
for the \emph{anti-de Sitter space} and its boundary as well.
By convention, let the anti-de Sitter space be $(n+1)$-dimensional and denoted by $\mathbb{H}^{n+1}_1$.
In the case of $n=0$, $\mathbb{H}^1_1$ is a circle and does not have a boundary, 
which we consider trivial and from now on we assume $n\ge 1$.
If we treat $\mathbb{H}^{n+1}_1$ as isometrically embedded in $\mathbb{R}^{n,2}$,
denote by $\mathbb{H}^{n+1}_{1,-}$ (embedded in a different linear space $\mathbb{R}^{n,2}_{-}$,
see Section~\ref{section_preliminaries}) a copy of $\mathbb{H}^{n+1}_1$.
Similar to the role of $\mathbb{H}^n_{-}$ played in the construction of $\mathbb{DH}^n$,
the length element $ds$ on $\mathbb{H}^{n+1}_{1,-}$ is the negative
of the length element $ds$ on $\mathbb{H}^{n+1}_1$.

\begin{definition}
By gluing $\mathbb{H}^{n+1}_1$ to $\mathbb{H}^{n+1}_{1,-}$ along the boundary at infinity 
$\partial\mathbb{H}^{n+1}_1$ by identifying their \emph{opposite} ends projectively,
we obtain a new space and denote it by \emph{double anti-de Sitter space} $\mathbb{DH}^{n+1}_1$
(see Figure~\ref{figure_double_AdS}).
\end{definition}

\begin{figure}[h]
\centering
\resizebox{.4\textwidth}{!}
  {\input{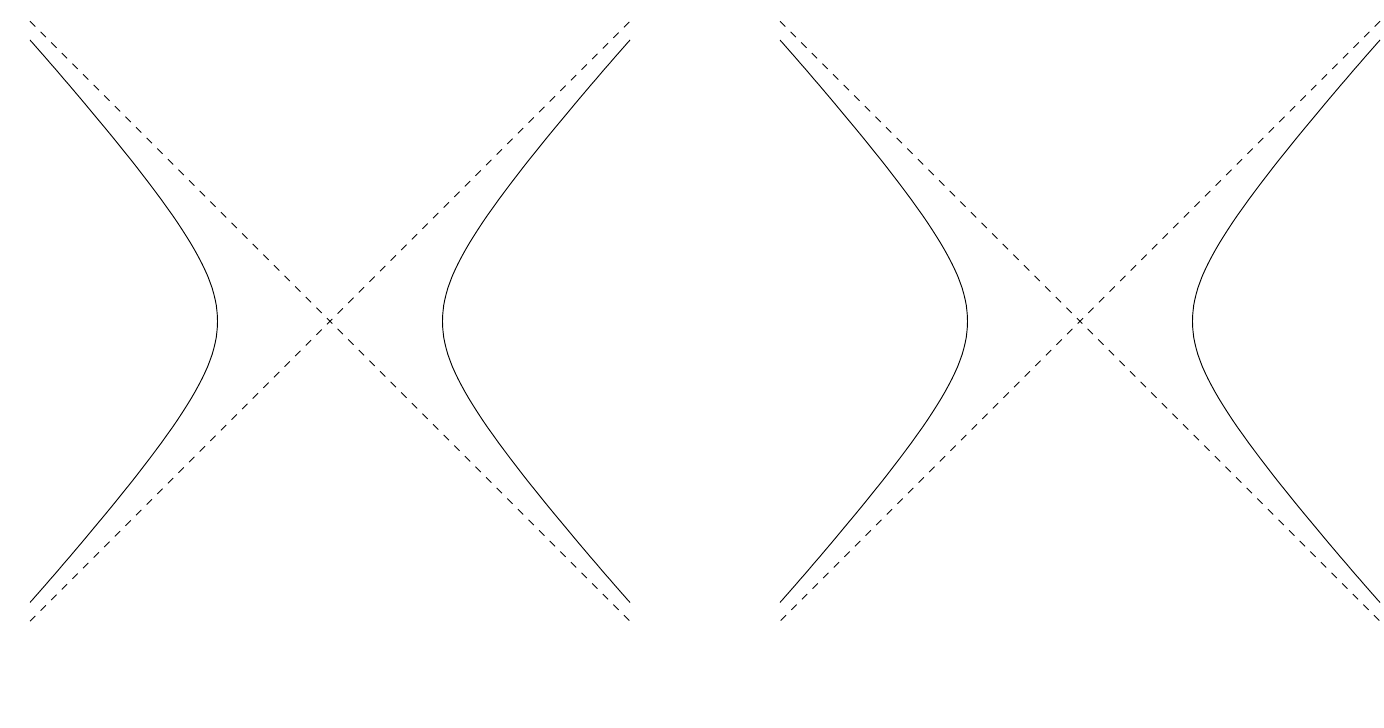_t}}
\caption{The double anti-de Sitter space $\mathbb{DH}^{n+1}_1$ 
is obtained by gluing $\mathbb{H}^{n+1}_1$ to $\mathbb{H}^{n+1}_{1,-}$ 
by identifying their opposite ends projectively, e.g., $A$ with $A'$,  and $B$ with $B'$, etc.}
\label{figure_double_AdS}
\end{figure}

Again, the construction of $\mathbb{DH}^{n+1}_1$ perhaps appears unconventional.
But if the reader can somehow be convinced that the construction of $\mathbb{DH}^n$ 
is the \emph{right} approach to extend $\mathbb{H}^n$ \emph{conformally} across $\partial\mathbb{H}^n$,
then one may in fact find the construction of $\mathbb{DH}^{n+1}_1$ quite natural,
as the two constructions are essentially the same.
We use the notation $\mathbb{DH}^{n+1}_1$
to refer to the double anti-de Sitter space in the general sense,
independent of the model used.
But when the context is clear, without introducing more notations,
for convenience we also use $\mathbb{DH}^{n+1}_1$ to refer to this particular model above,
which is analogous to the hyperboloid model of hyperbolic space.
We introduce some new notions of $\mathbb{DH}^{n+1}_1$ using this model,
but the notions can also be easily extended to other models later.
An \emph{isometry} of $\mathbb{DH}^{n+1}_1$ is an isometry of $\mathbb{H}^{n+1}_1$  
that also preserves the antipodal points in $\mathbb{H}^{n+1}_{1,-}$
(we remark that there are also antipodal points within $\mathbb{H}^{n+1}_1$ itself),
so it is completely determined by the isometry of $\mathbb{H}^{n+1}_1$.
In $\mathbb{R}^{n,2}$, any hyperplane containing the origin cuts $\mathbb{H}^{n+1}_1$ into two half-spaces.

\begin{definition}
\label{definition_half_space}
A \emph{half-space} in $\mathbb{DH}^{n+1}_1$ is obtained by gluing a half-space in $\mathbb{H}^{n+1}_1$ 
and its antipodal image in $\mathbb{H}^{n+1}_{1,-}$ along $\partial\mathbb{H}^{n+1}_1$ 
by identifying their \emph{opposite} ends projectively,
and a \emph{polytope} in $\mathbb{DH}^{n+1}_1$ is a finite intersection of half-spaces in $\mathbb{DH}^{n+1}_1$.
\end{definition}

A polytope in $\mathbb{DH}^{n+1}_1$ is always symmetric between 
$\mathbb{H}^{n+1}_1$ and $\mathbb{H}^{n+1}_{1,-}$ through antipodal points.
We note that while both $\mathbb{H}^{n+1}_1$ and $\mathbb{H}^{n+1}_{1,-}$ are ``half'' of $\mathbb{DH}^{n+1}_1$, 
by definition they are not half-spaces in $\mathbb{DH}^{n+1}_1$.
Similarly, a polytope in $\mathbb{H}^{n+1}_1$ 
(a finite intersection of half-spaces in $\mathbb{H}^{n+1}_1$ and possibly unbounded)
by itself is not a polytope in $\mathbb{DH}^{n+1}_1$.
We remark that a polytope in $\mathbb{DH}^{n+1}_1$ may not be homeomorphic to a ball,
and can also possibly contain more than one connected component,
e.g., one in $\mathbb{H}^{n+1}_1$ and one in $\mathbb{H}^{n+1}_{1,-}$ respectively.

Similar to $\mathbb{DH}^n$, we want to introduce a volume $V_{n+1}(P)$ on polytopes in $\mathbb{DH}^{n+1}_1$.
But unlike $\mathbb{DH}^n$, in $\mathbb{DH}^{n+1}_1$ the face of a half-space may have degenerate metric,
and we will show that if a polytope $P$ in $\mathbb{DH}^{n+1}_1$ 
contains a facet with degenerate metric, then $V_{n+1}(P)$ may not exist.
For this reason, we introduce the following notion.

\begin{definition}
\label{definition_algebra_half_space}
A \emph{good half-space} in $\mathbb{DH}^{n+1}_1$ is a half-space whose face has non-degenerate metric,
and a \emph{good polytope} in $\mathbb{DH}^{n+1}_1$
is a finite intersection of good half-spaces in $\mathbb{DH}^{n+1}_1$.
Let $\mathcal{H}$ (resp. $\mathcal{H}_0$) be the algebra over $\mathbb{DH}^{n+1}_1$ 
generated by half-spaces (resp. good half-spaces) in $\mathbb{DH}^{n+1}_1$.
\end{definition}

We caution that while by definition all facets of a good polytope have non-degenerate metrics,
they may not all be lower $n$-dimensional \emph{good} polytopes.
This is because a codimension 2 face of a good polytope may still have degenerate metric,
and as a result the facet containing this codimension 2 face is not a lower $n$-dimensional good polytope.
However, this degeneracy is not a concern for our results.

\subsection{Main results}

One of the main goals of this paper is to properly
extend the volume on $\mathbb{H}^{n+1}_1$ to $\mathbb{DH}^{n+1}_1$,
such that it is also compatible with the volume elements of both 
$\mathbb{H}^{n+1}_1$ and $\mathbb{H}^{n+1}_{1,-}$.
To build the theory for $\mathbb{DH}^{n+1}_1$,
we follow a similar methodology as in Zhang~\cite{Zhang:double_hyperbolic}
where $\mathbb{DH}^n$ was introduced,
We introduce a volume $V_{n+1}(P)$ on good polytopes in $\mathbb{DH}^{n+1}_1$.

\begin{theorem}
\label{theorem_polytope_volume_finite_invariant}
Let $P\in\mathcal{H}_0$ in $\mathbb{DH}^{n+1}_1$,
then $V_{n+1}(P)$ is well defined and invariant under isometry.
\end{theorem}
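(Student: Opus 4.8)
The plan is to prove two things: first that $V_{n+1}(P)$ is \emph{well defined} (the integral/construction defining it converges and is independent of the auxiliary choices made in its definition), and second that it is \emph{invariant under isometry}. Since the excerpt tells us that $V_{n+1}$ is computed in a Minkowski-space model of $\mathbb{DH}^{n+1}_1$ via a newly-defined notion of inversion, and that the Lorentzian signature creates convergence difficulties, I expect the well-definedness half to be the substantive core, with invariance following more formally once the right framework is set up.

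\textbf{Well-definedness.} First I would fix the computational model on Minkowski space and write down the defining expression for $V_{n+1}(P)$ as an integral (or analytic continuation of one) over the polytope, split into its two symmetric halves in $\mathbb{H}^{n+1}_1$ and $\mathbb{H}^{n+1}_{1,-}$ where the length elements differ by a sign. The key reduction is to use the additivity coming from the algebra structure of $\mathcal{H}_0$: by inclusion–exclusion it suffices to control $V_{n+1}$ on finite intersections of good half-spaces, and ultimately to reduce an arbitrary good polytope to a signed combination of simpler ``cone-like'' or simplicial pieces whose volume can be computed directly. The crucial hypothesis is that $P$ is generated by \emph{good} half-spaces, so every facet carries a non-degenerate (Riemannian or Lorentzian) metric; I would show that near each facet the integrand has only the kind of singularity that is integrable once we pass to the complexified/analytically-continued integral, exactly in the spirit of the $1/x$-across-the-origin heuristic described in the introduction. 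The portion of $P$ meeting $\partial\mathbb{H}^{n+1}_1$ must be handled separately: here I would verify that the contribution from a neighborhood of the boundary at infinity is finite, using that the new Minkowski inversion conformally straightens the boundary and converts the apparent divergence into a principal-value / residue computation that cancels between the two sheets. Independence from the choices (the particular half-space decomposition, the inversion center, simplicial subdivision) would then follow because any two admissible computations differ by terms supported on degenerate lower-dimensional sets, which the good-polytope hypothesis forces to contribute zero.

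\textbf{Invariance under isometry.} Once $V_{n+1}$ is a well-defined function on $\mathcal{H}_0$, I would prove invariance by showing the defining construction is natural with respect to the isometry group. An isometry of $\mathbb{DH}^{n+1}_1$ is, by the definition given in the excerpt, an isometry of $\mathbb{H}^{n+1}_1$ compatible with the antipodal identification into $\mathbb{H}^{n+1}_{1,-}$; such a map sends good half-spaces to good half-spaces and preserves the algebra $\mathcal{H}_0$. Because isometries of $\mathbb{H}^{n+1}_1$ act linearly on the ambient $\mathbb{R}^{n,2}$ (they are the relevant orthogonal group), they preserve the length element $ds$ on each sheet and respect the sign convention between the sheets, hence preserve the volume element appearing in the integrand. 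The only subtlety is that an isometry need not commute with the chosen inversion used to compute $V_{n+1}$; I would resolve this by invoking the independence-from-choices established in the first part, so that applying an isometry and then computing with one inversion gives the same answer as computing with the transported inversion, and these agree by naturality.

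\textbf{Main obstacle.} The hard part will be the convergence and cancellation analysis for a good polytope that intersects $\partial\mathbb{H}^{n+1}_1$ nontrivially. In the Lorentzian setting the integrand can blow up along lightlike directions and across the boundary at infinity, and making sense of the integral requires the analytic-continuation / complex-valued interpretation rather than a naive real integral. Establishing that these singular contributions are genuinely finite, that the two sheets' contributions combine correctly under the $-ds$ convention, and that the answer is independent of the inversion and subdivision chosen, is where I expect the real technical work to lie; by contrast, isometry-invariance should be comparatively formal once the robust, choice-independent definition is in hand.
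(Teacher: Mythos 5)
Your high-level skeleton --- define $V_{n+1}$ by a regularized integral in a Minkowski-space model, use the algebra structure of $\mathcal{H}_0$ to reduce to good polytopes in finite regions, and use the non-degeneracy of the facets to control the singularities --- matches the paper, and you correctly identify that the good-half-space hypothesis is what makes the singular contributions integrable. But there is a genuine circularity in how you split the work. In the paper's setup, $V_{n+1}(P)$ is defined by cutting $P$ into pieces $P_i$ and choosing isometries $g_i$ that move each piece into a finite region of the model $\mathcal{R}$, then summing $\mu(g_i(P_i))$, where $\mu$ is the $\epsilon\to 0^{+}$ limit of the integrals of $dx_0\cdots dx_n/(x_0-\epsilon i)^{n+1}$. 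Independence of the auxiliary choices (the cut and the $g_i$) and invariance under isometry are therefore the \emph{same} statement: both reduce, via a common-refinement argument, to the claim that $\mu(g(Q))=\mu(Q)$ for every good polytope $Q$ in a finite region and every isometry $g$ keeping it in one. You cannot first establish ``independence from choices'' and then deduce isometry-invariance formally from it, as your second part proposes; and your argument for independence itself (``any two admissible computations differ by terms supported on degenerate lower-dimensional sets'') does not apply, since two isometric images $g_i(Q)$ and $g'_j(Q)$ of the same piece are not related by discarding lower-dimensional sets.

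The reason none of this is formal is that the regularization is not isometry-invariant. Isometries do act linearly on $\mathbb{R}^{n,2}$, but in the model $\mathcal{R}$ they become compositions of Minkowski isometries, similarities, and the inversions $j$ and $j_{-}$, and the perturbed volume element $dx_0\cdots dx_n/(x_0-\epsilon i)^{n+1}$ is manifestly preserved only by the first two (which fix the $x_0$-coordinate up to a positive factor). Invariance of the limit $\mu$ under inversion is the hardest step of the paper: it requires first proving that $\mu(P_r)$ (the intersection with $x^2\le r^2$ or $x^2\le -r^2$) is continuous in $r$, which in turn uses slicing $P$ by timelike hyperplanes $x_n=t$, a Schl\"{a}fli-type formula expressing $d\mu(P_t)/dt$ through the lower-dimensional volumes $V_{n-1}(F)$ and radii $r_F$ of the faces of the slice, the uniform boundedness of polytope volumes in $\mathbb{DH}^{n-1}$ imported from the double hyperbolic paper, and a dominated convergence argument with a dominating function built from $1/r_F$. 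The existence proof likewise rests on an integrable dominating function rather than on a ``principal-value / residue computation that cancels between the two sheets''; for $n$ odd nothing cancels at all. So the proposal mislocates the main difficulty, and the route offered for the invariance half would not close the argument.
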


By the argument above that $V_{n+1}(P)$ may not exist
if a polytope $P$ contains a facet with degenerate metric, 
Theorem~\ref{theorem_polytope_volume_finite_invariant} 
cannot be strengthened by replacing $P\in\mathcal{H}_0$ with $P\in\mathcal{H}$.
Similar to $\mathbb{DH}^n$, the choice to define $V_{n+1}(P)$ in $\mathbb{DH}^{n+1}_1$ is not unique.
But unlike $\mathbb{DH}^n$, the Lorentzian metric of $\mathbb{DH}^{n+1}_1$ makes some issues, 
including but not limited to convergence issues,
more difficult to handle and require more techniques to make them work.
We also remark that $V_{n+1}(P)$ is only finitely but not countably additive
(see Example~\ref{example_countably_additive_AdS}).

\begin{theorem}
\label{theorem_volume_AdS_real_imaginary}
Let $P\in\mathcal{H}_0$ in $\mathbb{DH}^{n+1}_1$,
then $V_{n+1}(P)$ is real for $n$ odd,
and $V_{n+1}(P)$ is imaginary for $n$ even and is completely determined by 
the intersection of $P$ and $\partial\mathbb{H}^{n+1}_1$.
\end{theorem}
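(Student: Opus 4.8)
The plan is to deduce both assertions from the single structural fact that, under the $ds\to -ds$ convention on the lower sheet together with the antipodal identification, the two halves of $P$ contribute to $V_{n+1}(P)$ with a relative factor $(-1)^{n+1}$. Write $P=P_{+}\cup P_{-}$, where $P_{+}\subset\mathbb{H}^{n+1}_1$ and $P_{-}\subset\mathbb{H}^{n+1}_{1,-}$ is its antipodal image, glued along $P\cap\partial\mathbb{H}^{n+1}_1$. First I would record that the $(n+1)$-dimensional volume form built from the length element $-ds$ on $\mathbb{H}^{n+1}_{1,-}$ equals $(-1)^{n+1}$ times the form built from $+ds$; since with $+ds$ the lower sheet is an isometric copy of $\mathbb{H}^{n+1}_1$ and the antipodal map $x\mapsto -x$ is a linear isometry of the ambient $\mathbb{R}^{n,2}$ preserving AdS, the (regularized) contribution of $P_{-}$ is $(-1)^{n+1}$ times that of $P_{+}$. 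The content of Theorem~\ref{theorem_polytope_volume_finite_invariant} is precisely that the regularization across $\partial\mathbb{H}^{n+1}_1$ making these individually divergent contributions finite is well defined, so I may manipulate them as genuine complex numbers.

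For the reality statement I would establish the conjugation identity $\overline{V_{n+1}(P)}=(-1)^{n+1}V_{n+1}(P)$. The volume is defined by analytically continuing the positive real density of $\mathbb{H}^{n+1}_1$ across $\partial\mathbb{H}^{n+1}_1$, the continuation being fixed by pushing the integration off the boundary into a definite half of the complex domain (the one-dimensional model of this is the integration of $1/x$ across the origin described in Section~\ref{section_intro}). Complex conjugation reverses this choice of half; I would show that the conjugated continuation coincides with the continuation run from the opposite sheet, which by the previous paragraph differs from the original by exactly the factor $(-1)^{n+1}$. Granting the identity, $(-1)^{n+1}=1$ for $n$ odd forces $V_{n+1}(P)\in\mathbb{R}$, while $(-1)^{n+1}=-1$ for $n$ even forces $V_{n+1}(P)\in i\mathbb{R}$.

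For the boundary-determination statement (the case $n$ even) I would exploit that the same relative factor now reads $1+(-1)^{n+1}=0$. If $Q$ is a good polytope whose closure is disjoint from $\partial\mathbb{H}^{n+1}_1$, then its two halves contribute opposite amounts and cancel, so $V_{n+1}(Q)=0$ for $n$ even. Given two good polytopes $P_1,P_2$ with $P_1\cap\partial\mathbb{H}^{n+1}_1=P_2\cap\partial\mathbb{H}^{n+1}_1$, their symmetric difference lies in $\mathcal{H}_0$ and meets the boundary only in a lower-dimensional set; reducing it to boundary-disjoint pieces via finite additivity and isometry invariance (Theorem~\ref{theorem_polytope_volume_finite_invariant}) gives $V_{n+1}(P_1)=V_{n+1}(P_2)$. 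Hence for $n$ even the value depends only on $P\cap\partial\mathbb{H}^{n+1}_1$, as claimed.

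The hard part will be making the cancellation and the conjugation identity rigorous in the presence of the Lorentzian signature. Unlike the hyperbolic case, the facets of $P$ and their traces on $\partial\mathbb{H}^{n+1}_1$ may be space-like, time-like, or change type, and the square roots entering $\sqrt{|\det g|}$ and the cross-boundary continuation then acquire signature-dependent phases. I expect the main technical work to be showing that these extra phases organize into the single global factor $(-1)^{n+1}$ and do not leave a residual real part when $n$ is even; this is exactly where the regularization and the Minkowski inversion of Remark~\ref{remark_Minkowski_inversion} are needed, and where the convergence subtleties flagged in the introduction must be controlled.
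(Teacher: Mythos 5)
Your first two paragraphs are essentially the paper's argument in different clothing: the conjugation identity $\overline{V_{n+1}(P)}=(-1)^{n+1}V_{n+1}(P)$ is exactly the paper's identity $\mu_{\epsilon}(P)=\mu_{\epsilon}(P_{+})+(-1)^{n+1}\mu_{-\epsilon}(P_{+})$ from (\ref{equation_mu_epsilon_P_plus}), since $\overline{\mu_{\epsilon}(P_{+})}=\mu_{-\epsilon}(P_{+})$, and it correctly yields real values for $n$ odd and imaginary values for $n$ even. That part is fine.

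The boundary-determination argument, however, has a genuine gap. You propose to reduce the symmetric difference $P_1\triangle P_2$ ``to boundary-disjoint pieces via finite additivity,'' but no such finite decomposition exists in general: even when $P_1\cap\partial\mathbb{H}^{n+1}_1=P_2\cap\partial\mathbb{H}^{n+1}_1$, the \emph{closure} of $P_1\setminus P_2$ can meet $\partial\mathbb{H}^{n+1}_1$ in an $(n-1)$-dimensional set, and any finite cutting into polytopes leaves at least one piece whose closure still touches the boundary (a countable exhaustion is useless here because $V_{n+1}$ is only finitely additive, cf.\ Example~\ref{example_countably_additive_AdS}). Moreover, for such a piece $Q$ the naive cancellation $V_{n+1}(Q_{-})=-V_{n+1}(Q_{+})$ is not available, because $Q_{+}$ need not have finite standard volume when $\overline{Q}$ touches $x_0=0$ (e.g.\ $\{0<x_0<1,\ |x_1|\le x_0,\ 0\le x_2\le 1\}$ in $\mathcal{R}$ has lower-dimensional trace on $x_0=0$ but $\int dx/x_0^{3}=\infty$), so the two contributions are not individually meaningful complex numbers that cancel. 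What is actually needed is the statement that $V_{n+1}(Q)=0$ for $n$ even whenever $Q\cap\partial\mathbb{H}^{n+1}_1$ is less than $n$-dimensional (Corollary~\ref{corollary_volume_polytope_zero}), and the paper proves this by the slicing formula $V_{n+1}(Q)=\int b(t)\,dt$ with $b(t)=-\frac{1}{n}\sum_F\pm\frac{1}{r_F}V_{n-1}(F)$, using that the \emph{regularized} facet volumes $V_{n-1}(F)$ vanish by parity when $F$ has no ideal points, plus a separate argument for $n=2$ where ideal vertices make $V_1(F_{\pm})$ individually infinite and one must show they occur for only finitely many $t$. None of this machinery is supplied or replaced by your outline, and your own closing paragraph correctly flags that this is where the real work lies.
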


\begin{remark}
We remark that while a bounded polytope in $\mathbb{H}^{n+1}_1$ always has finite \emph{real} volume
for both odd and even dimensions, by definition it is not an element in $\mathcal{H}_0$,
so this property does not contradict Theorem~\ref{theorem_volume_AdS_real_imaginary}.
For $n$ even, Theorem~\ref{theorem_volume_AdS_real_imaginary} implies that 
the information of $V_{n+1}(P)$ is completely encoded in the boundary at infinity $\partial\mathbb{H}^{n+1}_1$.
As an important application of Theorem~\ref{theorem_volume_AdS_real_imaginary},
for $n$ even and $n\ge 2$, the volume on $\mathbb{DH}^{n+1}_1$ induces an intrinsic (conformal) \emph{volume}
on $\partial\mathbb{H}^{n+1}_1$ that is invariant under conformal transformations of 
$\partial\mathbb{H}^{n+1}_1$ (see below).
Another main application is that for $n$ even, it establishes an AdS-CFT type correspondence 
between the volumes on $\mathbb{DH}^{n+1}_1$ and $\partial\mathbb{H}^{n+1}_1$.
What makes the construction of $\mathbb{DH}^{n+1}_1$ so special 
or in some aspects even necessary is that, to have a correct setup,
technically it is the volume on $\mathbb{DH}^{n+1}_1$ instead of the volume on $\mathbb{H}^{n+1}_1$, 
that makes sense of this AdS-CFT type correspondence to 
the volume on $\partial\mathbb{H}^{n+1}_1$.
\end{remark}

For a good half-space in $\mathbb{DH}^{n+1}_1$, its restriction to $\partial\mathbb{H}^{n+1}_1$ 
is also called a \emph{good half-space} in $\partial\mathbb{H}^{n+1}_1$.
An important application of Theorem~\ref{theorem_volume_AdS_real_imaginary} is that for $n$ even,
it induces an intrinsic real-valued finitely additive measure 
(with the values not necessarily non-negative) on $\mathcal{F}_0$,
the algebra generated by good half-spaces in $\partial\mathbb{H}^{n+1}_1$.
For any $G\in\mathcal{F}_0$, choose any $P\in\mathcal{H}_0$ such that $G=P\cap \partial\mathbb{H}^{n+1}_1$,
by assigning $V_{n+1}(P)$ to $G$ and adjust by a constant factor later,
we call it the \emph{volume} of $G$ and denote it by $V_{\infty,n}(G)$.

\begin{theorem}
\label{theorem_volume_boundary_at_infinity}
For $n$ even and $n\ge 2$, let $G\in\mathcal{F}_0$ in $\partial\mathbb{H}^{n+1}_1$,
then $V_{\infty,n}(G)$ is well defined and invariant under conformal transformations
of $\partial\mathbb{H}^{n+1}_1$.
\end{theorem}

In this paper the \emph{conformal transformations} of $\partial\mathbb{H}^{n+1}_1$
always refer to the \emph{global} conformal transformations of 
$\partial\mathbb{H}^{n+1}_1$ induced by the isometries of $\mathbb{H}^{n+1}_1$.
To the best of our knowledge, both the definition of this ``conformal volume'' $V_{\infty,n}(G)$ on $G$ 
in $\partial\mathbb{H}^{n+1}_1$ and its conformal invariance property are new in the literature.

\subsection{Strategy overview}

To extend the volume on $\mathbb{H}^{n+1}_1$ to $\mathbb{DH}^{n+1}_1$,
the integral of the volume element across $\partial\mathbb{H}^{n+1}_1$ cannot be defined
by the standard Lebesgue integral.
To fix this issue, we give a complex perturbation to the volume element of the space,
with the underlying space endowed with complex valued ``Lorentzian metric'' instead,
and define a volume as the integral of the perturbed volume element.
We note that there is no complex geometry involved, 
and contrary to what it may appear at first sight, complex analysis plays only a minimal role here.
In fact, besides some basic understanding of the different models of $\mathbb{H}^{n+1}_1$ and $\mathbb{H}^n$,
no further knowledge of differential geometry including Lorentzian geometry is assumed of the reader.
To the best of our knowledge, our extension of $\mathbb{H}^{n+1}_1$ to $\mathbb{DH}^{n+1}_1$
is the first \emph{conformal} extension made to $\mathbb{H}^{n+1}_1$ in the literature,
which follows a similar methodology employed by \cite{Zhang:double_hyperbolic} 
to extend $\mathbb{H}^n$ conformally to $\mathbb{DH}^n$.
See also Cho and Kim~\cite{ChoKim} for a projective extension of the volume on hyperbolic space to the de Sitter space.
For a combinatorial treatment of polyhedra (of 3-dimensional only)
in a projective extension of the anti-de Sitter space $\mathbb{H}^3_1$, 
e.g., see Chen and Schlenker~\cite{ChenSchlenker}.

The computation of $V_{n+1}(P)$ is performed in a different model of $\mathbb{DH}^{n+1}_1$,
which is based on a well known model of the anti-de Sitter space using the Minkowski space
(see Matsuda~\cite{Matsuda:antidesitter}). Particularly, when using this model, 
we provide a new definition (to our knowledge) of the \emph{inversion} in Minkowski space, 
different from the traditional understanding (see Remark~\ref{remark_Minkowski_inversion}).

\section{Preliminaries}
\label{section_preliminaries}

We recall here some basic notions of the anti-de Sitter space $\mathbb{H}^{n+1}_1$,
and also introduce new notions for the double anti-de Sitter space $\mathbb{DH}^{n+1}_1$.

Let $\mathbb{R}^{n,2}$ be an $(n+2)$-dimensional vector space endowed with a bilinear product
\begin{equation}
\label{equation_bilinear_product}
x\cdot y=x_0y_0+\cdots+x_{n-1}y_{n-1}-x_ny_n-x_{n+1}y_{n+1},
\end{equation}
then the bilinear product on $\mathbb{R}^{n,2}$ induces a metric
$ds^2=dx_0^2+\cdots+dx_{n-1}^2-dx_n^2-dx_{n+1}^2$,
with the length element $ds=(ds^2)^{1/2}$.
An $(n+1)$-dimensional \emph{anti-de Sitter space} $\mathbb{H}^{n+1}_1$ is defined by
\[ \bigl\{x \in \mathbb{R}^{n,2}: x\cdot x = -1 \bigr\}.
\]
The 1-dimensional anti-de Sitter space $\mathbb{H}^{1}_1$,
which satisfies $-x_0^2-x_1^2=-1$ in $\mathbb{R}^{0,2}$, is a circle with timelike geodesics.
For $n\ge 1$, let the \emph{boundary at infinity} $\partial\mathbb{H}^{n+1}_1$
be the end of those \emph{half}-lines that lie on the \emph{light cone}
$\{x \in \mathbb{R}^{n,2}: x\cdot x = 0\}$ in $\mathbb{R}^{n,2}$.
The boundary is homeomorphic to $\mathbb{S}^{n-1}\times\mathbb{S}^1$.

In this paper the length element $ds$ can also be $-(ds^2)^{1/2}$ in some other cases,
which is crucial for the construction of $\mathbb{DH}^{n+1}_1$.
Let $\mathbb{R}^{n,2}_{-}$ be a copy of $\mathbb{R}^{n,2}$
where it is endowed with the same bilinear product (\ref{equation_bilinear_product}),
but the length element $ds$ on $\mathbb{R}^{n,2}_{-}$
is the negative of the $ds$ on $\mathbb{R}^{n,2}$
\[ds=-(dx_0^2+\cdots+dx_{n-1}^2-dx_n^2-dx_{n+1}^2)^{1/2}.
\]
Namely, $\mathbb{R}^{n,2}_{-}$ has the same $ds^2$ as $\mathbb{R}^{n,2}$, 
but is \emph{not} isometric to $\mathbb{R}^{n,2}$.
Define $\mathbb{H}^{n+1}_{1,-}$ by
\[ \bigl\{x \in \mathbb{R}^{n,2}_{-}: x\cdot x = -1 \bigr\}.
\]
Let $\partial\mathbb{H}^{n+1}_{1,-}$ be the end of those half-lines that lie on the other light cone
$\{x \in \mathbb{R}^{n,2}_{-}: x\cdot x = 0\}$ in $\mathbb{R}^{n,2}_{-}$.
By identifying $\partial\mathbb{H}^{n+1}_1$ with the \emph{opposite} ends
of $\partial\mathbb{H}^{n+1}_{1,-}$ projectively, 
we glue $\mathbb{H}^{n+1}_1$ and $\mathbb{H}^{n+1}_{1,-}$ together
and denote the resulting space by \emph{double anti-de Sitter space} $\mathbb{DH}^{n+1}_1$.
Further computation shows that $\mathbb{DH}^{n+1}_1$ is homeomorphic to $\mathbb{S}^n\times\mathbb{S}^1$.

\section{A new model for $\mathbb{DH}^{n+1}_1$}
\label{section_model_R_D}

For $n\ge 1$, Matsuda~\cite{Matsuda:antidesitter} used an upper half-space $U^{n+1}$
to partially represent $\mathbb{H}^{n+1}_1$, more precisely, half of $\mathbb{H}^{n+1}_1$. 
The model is analogous to the upper half-space model of $\mathbb{H}^n$.
In this section, we extend this model to fully represent the double anti-de Sitter space $\mathbb{DH}^{n+1}_1$.
This new model of $\mathbb{DH}^{n+1}_1$
is particularly useful for computing the volume introduced on $\mathbb{DH}^{n+1}_1$.
A similar representation of the \emph{de Sitter space} was given by Nomizu~\cite{Nomizu:Lorentz}.

\subsection{An isometric embedding}
\label{section_isometric_embedding}

Let $\mathbb{R}^{n,1}$ be the $(n+1)$-dimensional Minkowski space endowed with a bilinear product
\begin{equation}
\label{equation_bilinear_product_Minkowski}
x\cdot y=x_0y_0+\cdots+x_{n-1}y_{n-1}-x_ny_n,
\end{equation}
then the bilinear product induces a metric $ds^2=dx_0^2+\cdots+dx_{n-1}^2-dx_n^2$
on $\mathbb{R}^{n,1}$, with the length element $ds=(ds^2)^{1/2}$.
Let $\mathcal{R}$ be an $(n+1)$-dimensional space endowed with the Lorentzian metric
\begin{equation}
\label{equation_Lorentz_metric}
ds=(dx_0^2+\cdots+dx_{n-1}^2-dx_n^2)^{1/2}/x_0
\end{equation}
for both $x_0>0$ and $x_0<0$.
So $\mathcal{R}$ is conformally equivalent to the metric of Minkowski space $\mathbb{R}^{n,1}$ 
(except at $x_0=0$) by a factor of $1/x_0$ (referred as the \emph{conformal factor}).
By convention the volume element of $\mathbb{R}^{n,1}$ is $dx_0\cdots dx_n$,
then the associated volume element of $\mathcal{R}$ for both $x_0>0$ and $x_0<0$,
multiplying $dx_0\cdots dx_n$ by $1/x_0^{n+1}$, is
\begin{equation*}
dx_0\cdots dx_n/x_0^{n+1}.
\end{equation*}
For $n$ even, notice that the coefficient $1/x_0^{n+1}$ is negative for $x_0<0$.
While the conformal factor $1/x_0$ is negative for $x_0<0$ and is not continuous at $x_0=0$, 
it is an \emph{analytic function} of $x_0$, as opposed to other factors like $|1/x_0|$.
This is one of the main reasons that makes it possible for us to use complex analysis 
to introduce a volume across $x_0=0$ in $\mathcal{R}$,
analogous to integrating $1/x$ in $\mathbb{R}$ from the negative to the positive across the origin
by using complex analysis.

Consider the upper half-space and the lower half-space in $\mathcal{R}$
\begin{equation}
\label{equation_upper_lower_half_space}
U^{n+1}=\{x\in \mathcal{R}: x_0>0\}
\quad\text{and}\quad
L^{n+1}=\{x\in \mathcal{R}: x_0<0\},
\end{equation}
Matsuda~\cite{Matsuda:antidesitter} showed that $U^{n+1}$ can be isometrically embedded
into the anti-de Sitter space $\mathbb{H}^{n+1}_1$.
The isometric embedding $f: U^{n+1}\to\mathbb{H}^{n+1}_1$ is defined by:
\[f(x_0,\dots,x_n)=(y_0,\dots,y_{n+1}),
\]
where
\begin{equation}
\label{equation_isometric_embedding}
\begin{cases}
y_0=(1-x^2)/2x_0     \\
y_i=-x_i/x_0,\quad 1\le i\le n    \\
y_{n+1}=(1+x^2)/2x_0,
\end{cases}
\end{equation}
with $x^2$ the bilinear product on $\mathbb{R}^{n,1}$ (\ref{equation_bilinear_product_Minkowski}).
The image $f(U^{n+1})$ is the open submanifold 
$V^{n+1}=\{y\in\mathbb{H}^{n+1}_1:  y_0+y_{n+1} > 0\}$,
which is half of $\mathbb{H}^{n+1}_1$.

Our goal is to extend $U^{n+1}$ to a bigger domain $\mathcal{R}_D$,
such that $f$ can be extended to be a full isometry 
between $\mathcal{R}_D$ and $\mathbb{DH}^{n+1}_1$.
Denote $W^{n+1}=\{y\in\mathbb{H}^{n+1}_1:  y_0+y_{n+1} < 0\}$,
$V^{n+1}_{-}=\{y\in\mathbb{H}^{n+1}_{1,-}:  y_0+y_{n+1} > 0\}$,
and $W^{n+1}_{-}=\{y\in\mathbb{H}^{n+1}_{1,-}:  y_0+y_{n+1} < 0\}$.

The following property is important for the construction.
For the lower half-space $L^{n+1}$ in $\mathcal{R}$ (\ref{equation_upper_lower_half_space}),
in order for $f(L^{n+1})$ (the same $f$ in (\ref{equation_isometric_embedding}))
to be an \emph{isometric} embedding, because the $ds$ on $L^{n+1}$ is $-(ds^2)^{1/2}$
for $x_0<0$ (see (\ref{equation_Lorentz_metric}))
and recall that the $ds$ on $\mathbb{H}^{n+1}_{1,-}$ is $-(ds^2)^{1/2}$,
so $L^{n+1}$ should be mapped into
$\mathbb{H}^{n+1}_{1,-}$ instead of $\mathbb{H}^{n+1}_1$.
The image $f(L^{n+1})$ is $W^{n+1}_{-}$ (see Figure~\ref{figure_mapping_AdS_R_D} (a)).

Let $\mathcal{R}_{-}$ be a copy of $\mathcal{R}$ but endowed with the Lorentzian metric (notice the minus sign)
\begin{equation}
\label{equation_Lorentz_metric_negative}
ds=-(dx_0^2+\cdots+dx_{n-1}^2-dx_n^2)^{1/2}/x_0
\end{equation}
for both $x_0>0$ and $x_0<0$.
The associated volume element of $\mathcal{R}_{-}$, multiplying $dx_0\cdots dx_n$ by $(-1/x_0)^{n+1}$, is 
\[(-1)^{n+1}dx_0\cdots dx_n/x_0^{n+1}.
\]
Similarly, for the upper half-space and the lower half-space in $\mathcal{R}_{-}$
\begin{equation}
\label{equation_upper_lower_half_space_minus}
U^{n+1}_{-}=\{x\in \mathcal{R}_{-}: x_0>0\}
\quad\text{and}\quad
L^{n+1}_{-}=\{x\in \mathcal{R}_{-}: x_0<0\},
\end{equation}
in order for both $f(U^{n+1}_{-})$ and $f(L^{n+1}_{-})$ 
(the same $f$ in (\ref{equation_isometric_embedding})) to be isometric embeddings,
they should be mapped into $\mathbb{H}^{n+1}_{1,-}$ and $\mathbb{H}^{n+1}_1$ respectively,
with $f(U^{n+1}_{-})=V^{n+1}_{-}$ and $f(L^{n+1}_{-})=W^{n+1}$ (see Figure~\ref{figure_mapping_AdS_R_D} (b)).

\begin{figure}[h]
\centering
\resizebox{.5\textwidth}{!}
  {\input{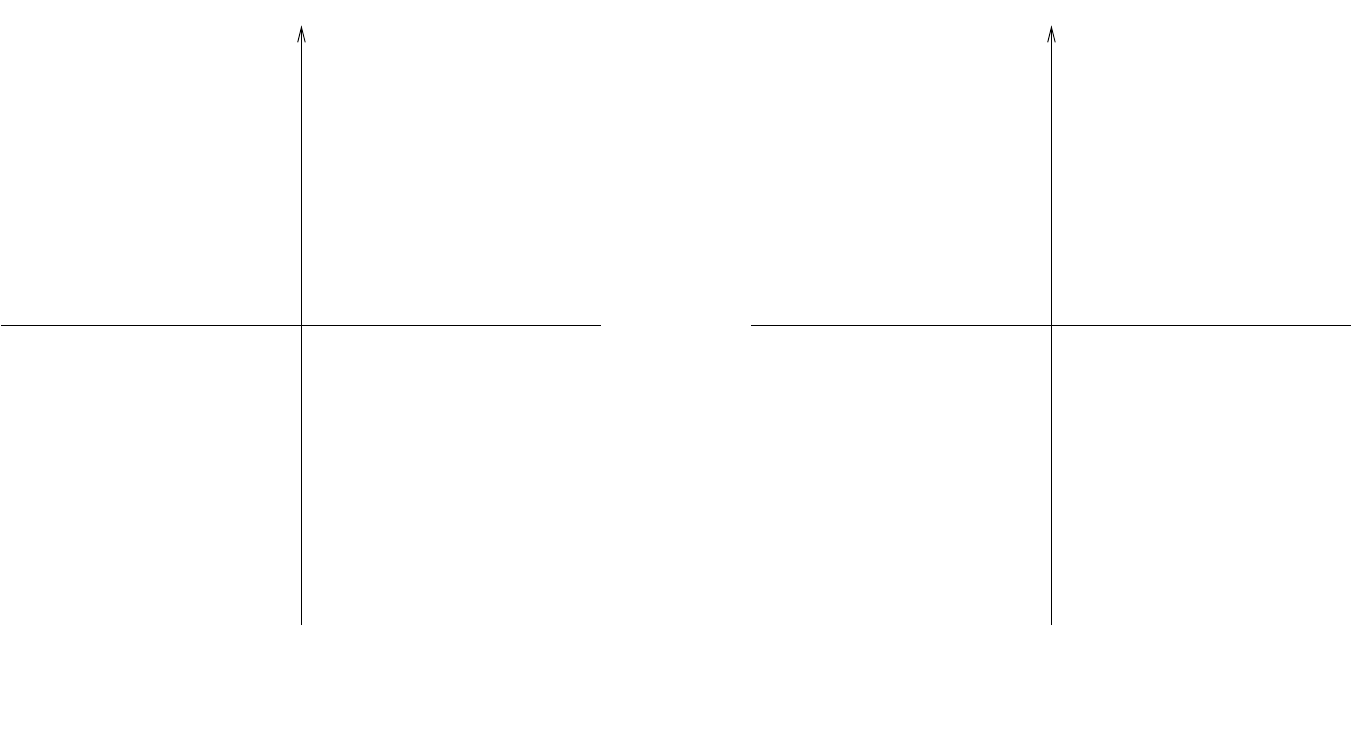_t}}
\caption{An isometric embedding of $\mathcal{R}$ and $\mathcal{R}_{-}$ into $\mathbb{DH}^{n+1}_1$}
\label{figure_mapping_AdS_R_D}
\end{figure}

\subsection{A gluing procedure}

In $\mathcal{R}$ for any point on $x_0=0$, by taking limit from above in the upper half-space $U^{n+1}$,
it is mapped by $f$ into $\partial\mathbb{H}^{n+1}_1$;
and by taking limit from below in the lower half-space $L^{n+1}$,
it is mapped by $f$ into the opposite end on $\partial\mathbb{H}^{n+1}_{1,-}$,
agreeing with the identification $\partial\mathbb{H}^{n+1}_1=\partial\mathbb{H}^{n+1}_{1,-}$
that we set earlier.
In $\mathcal{R}$ by gluing $U^{n+1}$ and $L^{n+1}$ along $x_0=0$,
the whole space $\mathcal{R}$ is mapped into $\mathbb{DH}^{n+1}_1$ as a half-space,
which contains $V^{n+1}$ and $W^{n+1}_{-}$ glued along $\partial\mathbb{H}^{n+1}_1$.
Similarly, in $\mathcal{R}_{-}$ by gluing $U^{n+1}_{-}$ and $L^{n+1}_{-}$ along $x_0=0$,
the whole space $\mathcal{R}_{-}$ is mapped into $\mathbb{DH}^{n+1}_1$ as a half-space,
which contains $V^{n+1}_{-}$ and $W^{n+1}$ glued along $\partial\mathbb{H}^{n+1}_1$.

\begin{remark}
In order for $f(\mathcal{R})$ and $f(\mathcal{R}_{-})$ to be well defined at $x_0=0$, 
in $\mathbb{DH}^{n+1}_1$ it is crucial that $\partial\mathbb{H}^{n+1}_1$ is identified with 
the \emph{opposite} ends of $\partial\mathbb{H}^{n+1}_{1,-}$ projectively.
\end{remark}

To further glue $\mathcal{R}$ and $\mathcal{R}_{-}$ along some ``points at infinity'' $\partial\mathcal{R}$
to form the new model $\mathcal{R}_D$ for $\mathbb{DH}^{n+1}_1$
(the quotation marks are added because $\partial\mathcal{R}$ is not the 
\emph{boundary at infinity} as $\partial\mathbb{H}^{n+1}_1$ is,
but only at some ``infinity'' as a model-specific notion for $\mathcal{R}$ and $\mathcal{R}_{-}$),
and extend $f$ to be a one-to-one mapping to $\mathbb{DH}^{n+1}_1$,
we first glue $U^{n+1}$ and $L^{n+1}_{-}$ together.
We use those null lines $l$ in $\mathcal{R}$, 
who has a \emph{upper} part in $U^{n+1}$ and a \emph{lower} part in $L^{n+1}$, 
to illustrate the gluing procedure. Let $l_{-}$ be $l$'s copy in $\mathcal{R}_{-}$.
By $f$ the upper part of $l$ in $U^{n+1}$ and the lower part of $l_{-}$ in $L^{n+1}_{-}$ are mapped to
a full null line in $\mathbb{H}^{n+1}_1$, with the exception of one missing point on $y_0+y_{n+1} = 0$
(recall that $f(U^{n+1})=V^{n+1}$ and $f(L^{n+1}_{-})=W^{n+1}$ 
are separated by $y_0+y_{n+1} = 0$ in $\mathbb{H}^{n+1}_1$).
By gluing the upper end of $l$ to the lower end of $l_{-}$, we extend $f$ to map it to the missing point,
and glue $U^{n+1}$ and $L^{n+1}_{-}$ together
(denote the union by $H^{n+1}$) to map it to $\mathbb{H}^{n+1}_1$.
It can be verified that any point on $y_0+y_{n+1} = 0$ in $\mathbb{H}^{n+1}_1$ can be mapped this way,
and it establishes an equivalence relation among those ends in $\mathcal{R}$ and $\mathcal{R}_{-}$.
Similarly, by gluing the lower end of $l$ to the upper end of $l_{-}$ together, 
which obtains a closed null geodesic (see Figure~\ref{figure_null_R_D}),
we glue $U^{n+1}_{-}$ and $L^{n+1}$ together (denote the union by $H^{n+1}_{-}$)
to map it to $\mathbb{H}^{n+1}_{1,-}$.
Finally, we glue $H^{n+1}$ and $H^{n+1}_{-}$ together,
just like how $\mathbb{H}^{n+1}_1$ and $\mathbb{H}^{n+1}_{1,-}$ are glued along 
$\partial\mathbb{H}^{n+1}_1$ to obtain $\mathbb{DH}^{n+1}_1$.

\begin{figure}[h]
\centering
\resizebox{.4\textwidth}{!}
  {\input{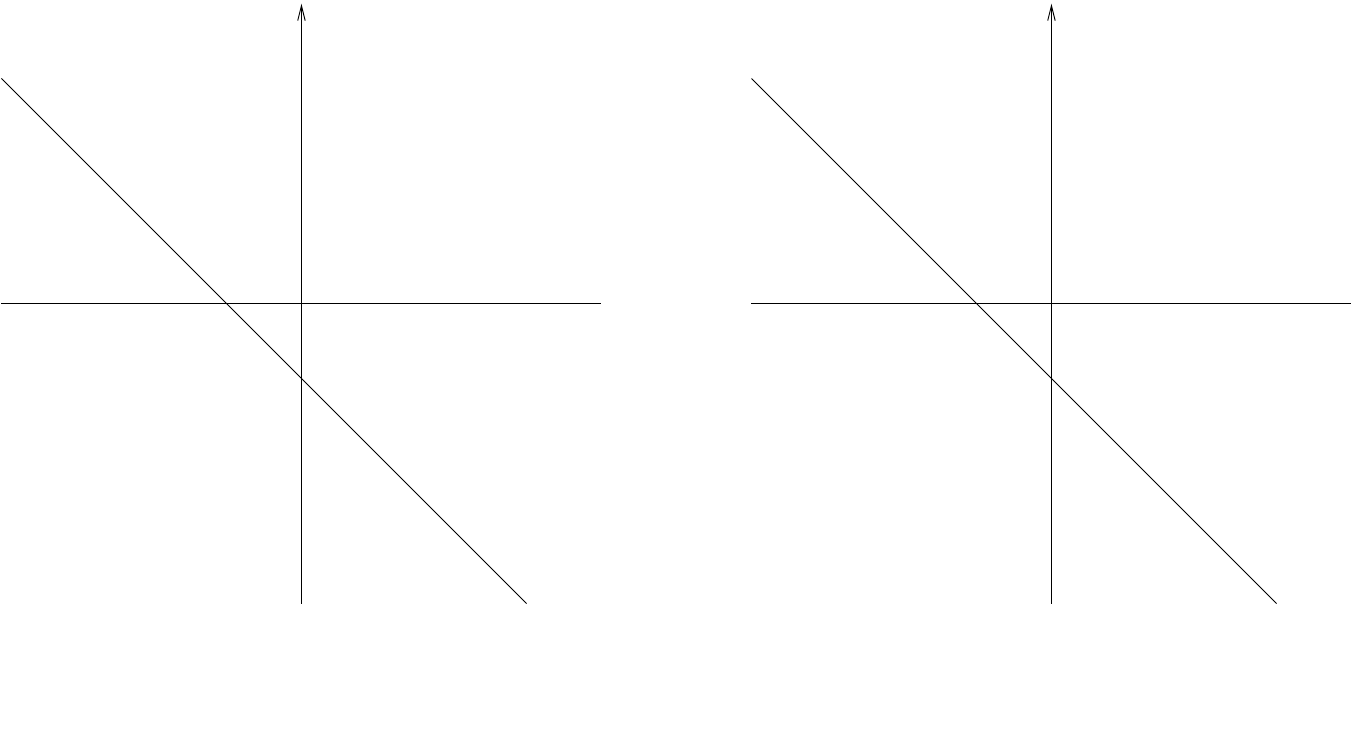_t}}
\caption{A closed null geodesic is formed by connecting a null line
$l$ in $\mathcal{R}$ to $l_{-}$ in $\mathcal{R}_{-}$ through their opposite ends,
by identifying $A$ with $A'$, and $B$ with $B'$ respectively}
\label{figure_null_R_D}
\end{figure}

\begin{definition}
\label{definition_model_R_D}
Denote the resulting space by $\mathcal{R}_D$.
As a new model for $\mathbb{DH}^{n+1}_1$, $\mathcal{R}_D$ contains $\mathcal{R}$ and $\mathcal{R}_{-}$ 
glued along $\partial\mathcal{R}$.
\end{definition}

The mapping $f$ of $\mathcal{R}_D$ to $\mathbb{DH}^{n+1}_1$,
particularly at this moment when we have not analyzed $\partial\mathcal{R}$ in detail yet,
may seem complicated at first sight.
But once the isometries of $\mathcal{R}_D$ to \emph{itself} are established and understood
(see Section~\ref{section_isometries_R_D}),
which for our purpose are a lot easier to use than the isometries of $\mathbb{DH}^{n+1}_1$,
$\partial\mathcal{R}$ can be studied within $\mathcal{R}_D$ itself,
and then we won't need to use the mapping $f$
between $\mathcal{R}_D$ and $\mathbb{DH}^{n+1}_1$ afterwards.

\subsection{A sign function $h(x)$}

To differentiate points in $\mathbb{H}^{n+1}_1$ and $\mathbb{H}^{n+1}_{1,-}$ with the same coordinates,
define 
\begin{equation}
\label{equation_sign_function_AdS}
\ell(y)=1 \quad\text{if}\quad y\in\mathbb{H}^{n+1}_1,
\quad\text{and}\quad 
\ell(y)=-1 \quad\text{if}\quad y\in\mathbb{H}^{n+1}_{1,-},
\end{equation}
and for technical reasons we also define $\ell(y)=0$ for $y\in\partial\mathbb{H}^{n+1}_1$.
Similarly, to differentiate points in $\mathcal{R}$ and $\mathcal{R}_{-}$ with the same coordinates, 
we introduce a sign function $h(x)$ where 
\begin{equation}
\label{equation_sign_function_R_D}
h(x)=1 \quad\text{if}\quad x\in\mathcal{R}, 
\quad\text{and}\quad 
h(x)=-1 \quad\text{if}\quad x\in\mathcal{R}_{-},
\end{equation}
and for technical reasons we also define $h(x)=0$ for $x\in\partial\mathcal{R}$.

\begin{remark}
In general, for a formula involving $\ell(y)$ and $h(x)$, we are mainly interested in the points
in \emph{generic} position, namely $\ell(y)\ne 0$ and $h(x)\ne 0$,
and are not too concerned with those points with  $\ell(y)=0$ or $h(x)=0$.
Also, unlike $\ell(y)$, we remark that $h(x)$ is a model-specific function for $\mathcal{R}_D$ 
and is not invariant under isometry.
\end{remark}

With $h(x)$ encoding the information of whether $x$ is in $\mathcal{R}$ or $\mathcal{R}_{-}$ or $\partial\mathcal{R}$,
the isometry $f$ between $\mathcal{R}_D$ and $\mathbb{DH}^{n+1}_1$ 
can be expressed as
\begin{equation}
\label{equation_R_D_AdS_map_algebraic}
f: (x,h(x))\to (y,\ell(y)),
\end{equation}
where $y$ is defined in (\ref{equation_isometric_embedding}).
By the argument above about where $U^{n+1}$, $L^{n+1}$, $U^{n+1}_{-}$ and $L^{n+1}_{-}$ are mapped to by $f$
(see Figure~\ref{figure_mapping_AdS_R_D}), one immediately sees that
\begin{equation}
\label{equation_l_from_R_D_to_hyperboloid}
\ell(y)=\sgn(x_0) h(x)
\end{equation}
for both $x\in\mathcal{R}$ and $x\in\mathcal{R}_{-}$.

We reserve the notations of $\mathcal{R}$, $\mathcal{R}_{-}$, $\mathcal{R}_D$,
as well as $h(x)$ and $\ell(y)$, for the rest of this paper.
We next show that $h(x)$ of $\mathcal{R}_D$ plays an important role in the isometries of $\mathcal{R}_D$.

\subsection{Isometries of $\mathcal{R}_D$}
\label{section_isometries_R_D}

Recall that an isometry of $\mathbb{DH}^{n+1}_1$ by definition
is uniquely determined by an isometry of $\mathbb{H}^{n+1}_1$.
Because an isometry of $\mathbb{H}^{n+1}_1$ also preserves the antipodal points inside $\mathbb{H}^{n+1}_1$ itself,
so it is further uniquely determined by the isometry restricted to any half-space of $\mathbb{H}^{n+1}_1$.

Assume $g$ is an isometry of $\mathcal{R}_D$ to itself in the following.
By the argument above, $g$ is uniquely determined by its restriction to 
the upper half-space $U^{n+1}$ (see (\ref{equation_upper_lower_half_space})).
It is well known that the (possibly \emph{local}) isometry of $U^{n+1}$
can be expressed as a finite composition of isometries of Minkowski space, similarities, and inversions,
but we need to clarify the notions and make them precise in the context of $\mathcal{R}_D$.
Some of the interpretation below may be new in the literature
and is important for understanding the isometries of $\mathcal{R}_D$.

\begin{remark}
\label{remark_isometry_R_D}
As an isometry of $\mathbb{DH}^{n+1}_1$ does not map any point 
in $\mathbb{H}^{n+1}_1$ into $\mathbb{H}^{n+1}_{1,-}$,
so an isometry $g$ of $\mathcal{R}_D$ does not map any point in $U^{n+1}$ into $L^{n+1}$ or $U^{n+1}_{-}$
(see (\ref{equation_upper_lower_half_space}) and (\ref{equation_upper_lower_half_space_minus})),
but it is ok to map into $L^{n+1}_{-}$.
And similarly for other regions.
\end{remark}

Denote by $s_{\lambda}$ a \emph{similarity} of $\mathcal{R}_D$ that 
\begin{equation}
\label{equation_similarity}
s_{\lambda}: (x,h(x))\to (\lambda x,h(x)).
\end{equation}
By the argument above, note that $s_{\lambda}$ is an isometry of $\mathcal{R}_D$ only for $\lambda>0$.
On the other hand, for a different similarity $s_{\lambda,-}: (x,h(x))\to (\lambda x,-h(x))$,
it is an isometry of $\mathcal{R}_D$ only for $\lambda<0$.

Now consider the isometry of $\mathcal{R}_D$ such that it is also an \emph{inversion} $x\to x/x^2$,
one of the most subtle topics of this paper, 
where we provide a different \emph{definition} and interpretation from the one that the reader may be familiar with.
Denote the inversion by $j$, as it is an isometry of $\mathcal{R}_D$, by Remark~\ref{remark_isometry_R_D}, 
we have
\begin{equation}
\label{equation_inversion_sign}
h(j(x))=\sgn(x^2) h(x)
\end{equation}
for both $x\in\mathcal{R}$ and $x\in\mathcal{R}_{-}$.
Then we have the following definition.

\begin{definition}
\label{definition_Minkowski_inversion}
Let $j$ be an isometry of $\mathcal{R}_D$ and be an \emph{inversion} with $j: x\to x/x^2$,
then $j$ can be expressed as
\begin{equation*}
j: (x,h(x))\to (x/x^2,\sgn(x^2) h(x)).
\end{equation*}
\end{definition}

Hence $j$ maps $\{x\in\mathcal{R}: x^2>0\}$ to itself
but maps $\{x\in\mathcal{R}: x^2<0\}$ to $\{x\in\mathcal{R}_{-}: x^2<0\}$ instead,
and maps the light cone $\{x\in\mathcal{R}: x^2=0\}$ into $\partial\mathcal{R}$.
Similarly $j$ maps $\{x\in\mathcal{R}_{-}: x^2>0\}$ to itself
but maps $\{x\in\mathcal{R}_{-}: x^2<0\}$ to $\{x\in\mathcal{R}: x^2<0\}$ instead,
and maps the light cone $\{x\in\mathcal{R}_{-}: x^2=0\}$ into $\partial\mathcal{R}$.

\begin{remark}
\label{remark_Minkowski_inversion}
For any $x$ in a Minkowski space $\mathbb{R}^{n,1}$ with $x^2\ne 0$,
traditionally the inversion is understood as always mapping $x$ to $x/x^2$ in the \emph{same} $\mathbb{R}^{n,1}$,
no matter $x^2$ is positive or negative.
Under this traditional understanding, the light cone centered at the origin
(including the origin and the ends of the light cone, but with the two ends of any null line treated as the \emph{same} point) 
is mapped by the inversion to a ``conformal infinity'' of $\mathbb{R}^{n,1}$,
forming a \emph{conformal compactification} of $\mathbb{R}^{n,1}$ with topology 
$(\mathbb{S}^n\times\mathbb{S}^1)/\{\pm 1\}$ 
(e.g., see Jadczyk~\cite{Jadczyk:myths}).
Our construction of $\mathcal{R}_D$ is a (conformal) double cover of this conformal compactification
and has topology $\mathbb{S}^n\times\mathbb{S}^1$,
and in $\mathcal{R}$ the two ends of any null line are treated as two different points.
To our knowledge, Definition~\ref{definition_Minkowski_inversion} provides a \emph{new definition} 
of the inversion in Minkowski space, different from the traditional definition.
\end{remark}

Similarly, consider the isometry of $\mathcal{R}_D$ with $x\to -x/x^2$,
and denote it by $j_{-}$. So again by Remark~\ref{remark_isometry_R_D}, we have 
\[h(j_{-}(x))=-\sgn(x^2) h(x)
\]
for both $x\in\mathcal{R}$ and $x\in\mathcal{R}_{-}$.
Then we have the following definition.

\begin{definition}
\label{definition_Minkowski_inversion_negative}
Let $j_{-}$ be an isometry of $\mathcal{R}_D$ with $j_{-}: x\to -x/x^2$,
then $j_{-}$ can be expresses as
\begin{equation*}
j_{-}: (x,h(x))\to (-x/x^2,-\sgn(x^2) h(x)).
\end{equation*}
We also refer to $j_{-}$ as the \emph{inversion with respect to $-1$}.
\end{definition}

Hence $j_{-}$ maps $\{x\in\mathcal{R}: x^2>0\}$ to $\{x\in\mathcal{R}_{-}: x^2>0\}$
and maps $\{x\in\mathcal{R}: x^2<0\}$ to itself, and similarly for $x\in\mathcal{R}_{-}$.

\subsection{Half-spaces in $\mathcal{R}_D$}

Induced by the isometry $f$ (\ref{equation_R_D_AdS_map_algebraic})
between $\mathcal{R}_D$ and $\mathbb{DH}^{n+1}_1$,
a half-space in $\mathcal{R}_D$ is the preimage of a half-space in $\mathbb{DH}^{n+1}_1$ under $f$.
Recall that a half-space in $\mathbb{DH}^{n+1}_1$ is obtained by gluing a half-space in $\mathbb{H}^{n+1}_1$ 
and its antipodal image in $\mathbb{H}^{n+1}_{1,-}$ along $\partial\mathbb{H}^{n+1}_1$,
which can be expressed as
\begin{equation}
\label{equation_half_space_hyperboloid_AdS}
\bigl\{y \in \mathbb{DH}^{n+1}_1: \ell(y)y\cdot e \le 0 \bigr\},
\end{equation}
where $e=(e_0,\dots,e_{n+1})$ is a non-zero vector in $\mathbb{R}^{n,2}$ (but need not be a unit vector),
$y\cdot e$ is the same bilinear product (\ref{equation_bilinear_product}) 
for both $y\in\mathbb{H}^{n+1}_1$ and $y\in\mathbb{H}^{n+1}_{1,-}$,
and $\ell(y)$ is defined in (\ref{equation_sign_function_AdS}).
When $e^2$ is positive or negative or zero, the metric of the face of the half-space
is Lorentzian or Riemannian or degenerate respectively.
For a negative $e^2$, the face of the half-space  contains a pair of $\mathbb{DH}^n$.

The half-space in $\mathcal{R}_D$, as the preimage of (\ref{equation_half_space_hyperboloid_AdS}) 
under $f$ by (\ref{equation_isometric_embedding}), and replacing $\ell(y)$ with $\sgn(x_0) h(x)$
(see (\ref{equation_l_from_R_D_to_hyperboloid})), satisfies
\[\frac{\sgn(x_0)h(x)}{x_0}\left(e_0\frac{1-x^2}{2}-e_1x_1-\cdots - e_{n-1}x_{n-1} + e_nx_n
- e_{n+1}\frac{1+x^2}{2}\right) \le 0.
\]
As the factor $\frac{\sgn(x_0)}{x_0}$ is always positive for $x_0\ne 0$, so it can be dropped.
Then the half-space in $\mathcal{R}_D$ can be written as
\begin{equation}
\label{equation_half_space_R_D}
\{x\in \mathcal{R}_D: h(x) (ax^2+b\cdot x +c) \le 0\},
\end{equation}
where $a=-(e_0+e_{n+1})/2$, $b=(0,-e_1,\dots,-e_n)$, $c=(e_0-e_{n+1})/2$,
and $b\cdot x$ is the bilinear product on $\mathbb{R}^{n,1}$ (\ref{equation_bilinear_product_Minkowski})
for both $x\in\mathcal{R}$ and $x\in\mathcal{R}_{-}$.
We remark that $a$ is allowed to be 0, as long as $a$, $b$, $c$ are not all 0.

\begin{definition}
The \emph{discriminant} of $ax^2+b\cdot x+c$ is defined by $b^2-4ac$.
\end{definition}

Using the formulas above, the discriminant of $ax^2+b\cdot x+c$ 
is also $e_0^2+\cdots +e_{n-1}^2- e_n^2-e_{n+1}^2$,
the same as the bilinear product $e^2$ on $\mathbb{R}^{n,2}$ (\ref{equation_bilinear_product}).
As the metric of the face is determined by $e^2$,
so depending on if $b^2-4ac$ is positive or negative or zero,
the metric of the face is Lorentzian or Riemannian or degenerate respectively.
If $a\ne 0$, when restricted to $\mathcal{R}$ or $\mathcal{R}_{-}$,
the face is either a hyperboloid or a light cone centered on $x_0=0$ 
(but not necessarily at the origin, see Figure~\ref{figure_faces_R_D}).
If $a=0$ and $b\ne 0$, the face is a vertical plane to $x_0=0$.

\begin{figure}[h]
\centering
  \includegraphics[width=0.6\textwidth]{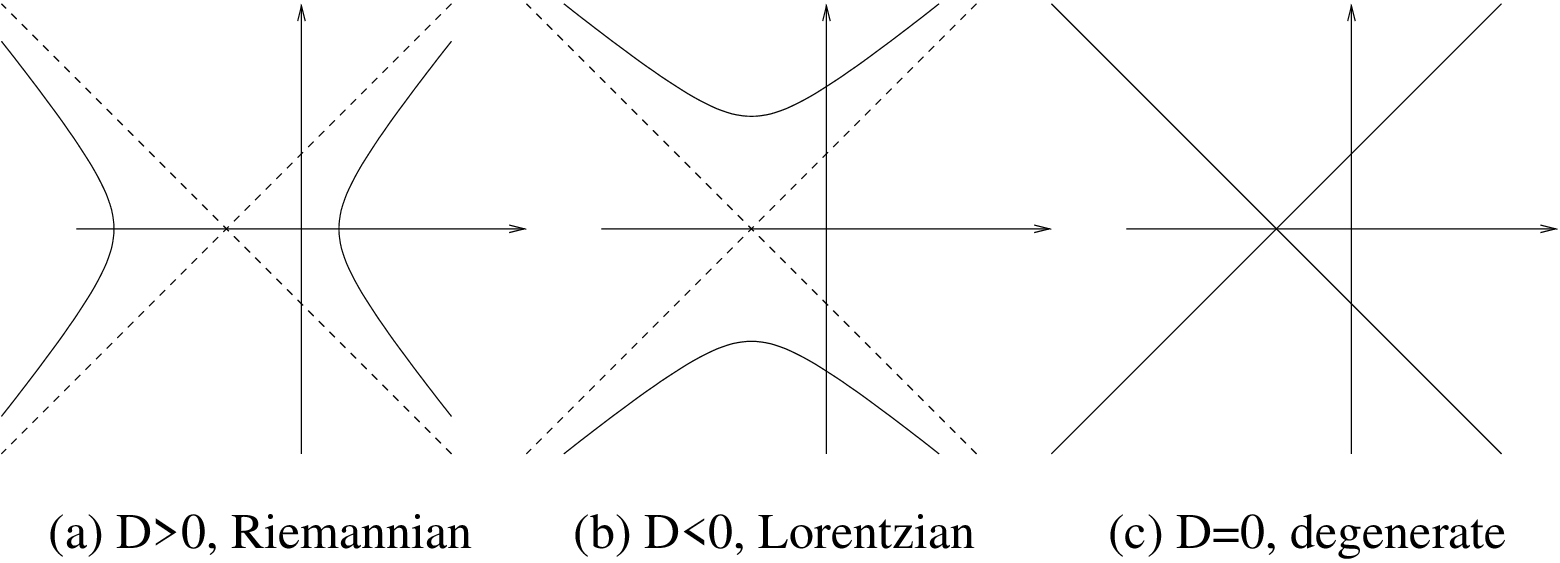}
\caption{The face $ax^2+b\cdot x+c=0$ (when $a\ne 0$) with discriminant $D=b^2-4ac$}
\label{figure_faces_R_D}
\end{figure}

\begin{remark}
\label{remark_discriminant}
If $a=0$ and $b=0$ with $c<0$ (resp. $c>0$),
it corresponds to the half-space $\mathcal{R}$ (resp. $\mathcal{R}_{-}$).
The discriminant is $b^2-4ac=0^2-4\cdot 0\cdot c=0$, a \emph{zero}.
So the metric of the face $\partial\mathcal{R}$ is degenerate,
and therefore $\mathcal{R}$ (or $\mathcal{R}_{-}$) is not a \emph{good} half-space of $\mathcal{R}_D$.
\end{remark}

Notice that  the $x_0$-coordinate of $b$ in (\ref{equation_half_space_R_D}) is 0, 
so a half-space is symmetric with respect to $x_0=0$ in both $\mathcal{R}$ and $\mathcal{R}_{-}$.
But if a point $x\in\mathcal{R}$ is in the interior (resp. exterior) of the half-space, 
because of the $h(x)$ in (\ref{equation_half_space_R_D}),
then the point in $\mathcal{R}_{-}$ with the same coordinate is in the exterior (resp. interior) of the half-space.

\begin{proposition}
\label{proposition_half_space_inversion}
The inversion of the half-space $\{x\in \mathcal{R}_D: h(x) (ax^2+b\cdot x +c) \le 0\}$ by $j$ is 
$\{x\in \mathcal{R}_D: h(x) (cx^2+b\cdot x +a) \le 0\}$,
and by $j_{-}$ is $\{x\in \mathcal{R}_D: h(x) (-cx^2+b\cdot x -a) \le 0\}$,
which are also half-spaces.
\end{proposition}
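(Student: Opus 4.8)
The plan is to exploit that $j$ is an involution on $\mathcal{R}_D$ and then substitute directly into the defining inequality. First I would verify that $j$ is its own inverse: writing $j(x)=x/x^2$ and using $(x/x^2)^2 = x^2/(x^2)^2 = 1/x^2$, one gets $j(j(x)) = (x/x^2)\cdot x^2 = x$ on the underlying Minkowski coordinate, while on the sign component $h(j(j(x))) = \sgn(1/x^2)\,\sgn(x^2)\,h(x) = h(x)$ by (\ref{equation_inversion_sign_extended}); hence $j^{-1}=j$ away from the light cone $\{x^2=0\}$. The same check works for $j_{-}$ via (\ref{equation_negative_inversion_sign_extended}). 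With the involution property in hand, a point $(x,h(x))$ lies in the image $j(S)$ of $S=\{h(x)(ax^2+b\cdot x+c)\le 0\}$ if and only if $j(x,h(x))\in S$, which turns the problem into a single substitution.

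Substituting $x\mapsto x/x^2$ and $h(x)\mapsto \sgn(x^2)h(x)$ into the inequality defining $S$, and using $(x/x^2)^2 = 1/x^2$ together with $b\cdot(x/x^2) = (b\cdot x)/x^2$, the quadratic becomes $a/x^2 + (b\cdot x)/x^2 + c = (cx^2 + b\cdot x + a)/x^2$, so the membership condition reads
\[
\sgn(x^2)\,h(x)\,\frac{cx^2+b\cdot x+a}{x^2}\le 0.
\]
The crucial observation is that $\sgn(x^2)/x^2 = 1/|x^2| > 0$ for every $x^2\neq 0$, so this positive factor may be dropped without affecting the direction of the inequality, leaving exactly $h(x)(cx^2+b\cdot x+a)\le 0$, the claimed image under $j$. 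The computation for $j_{-}$ is structurally identical: substituting $x\mapsto -x/x^2$ and $h(x)\mapsto -\sgn(x^2)h(x)$, the even power leaves $(-x/x^2)^2=1/x^2$ unchanged while $b\cdot(-x/x^2)$ flips the sign of the linear term, producing $(cx^2-b\cdot x+a)/x^2$, and after dropping the positive factor the extra overall minus sign from $-\sgn(x^2)h(x)$ yields $h(x)(-cx^2+b\cdot x-a)\le 0$, as stated.

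I expect the delicate part to be the sign bookkeeping, specifically the interplay between the sign flip of $h$ prescribed by (\ref{equation_inversion_sign_extended}) and the sign of $1/x^2$: the whole proposition hinges on these two conspiring so that $\sgn(x^2)/x^2$ is uniformly positive, and a miscounted sign would give a spurious reversal of the inequality. I would therefore state this positivity as the key lemma and verify the two cases $x^2>0$ and $x^2<0$ explicitly. Note that nothing in the manipulation uses the vanishing of the $x_0$-coordinate of $b$, which is precisely why the formula remains valid for the more general regions and not merely for half-spaces, as the statement asserts. The only genuinely excluded locus is $\{x^2=0\}$, where the substitution is undefined; since $j$ and $j_{-}$ send this locus into $\partial\mathcal{R}$, I would dispose of it by continuity, observing that this degenerate locus does not affect the region as an element of $\mathcal{H}$.
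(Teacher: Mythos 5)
Your proof is correct and follows essentially the same route as the paper's: multiply through by the positive factor $\sgn(x^2)/x^2$, use $h(j(x))=\sgn(x^2)h(x)$ from (\ref{equation_inversion_sign_extended}), and identify $(ax^2+b\cdot x+c)/x^2$ with $cy^2+b\cdot y+a$ for $y=j(x)$. The only additions are your explicit verification that $j$ and $j_{-}$ are involutions and your remark on the light cone, both of which the paper leaves implicit.
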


\begin{proof}
The two cases are very similar, so we only prove the case of $j$.
Denote $\{x\in \mathcal{R}_D: h(x) (ax^2+b\cdot x +c) \le 0\}$ by $H$,
multiply $\sgn(x^2)/x^2$ on both sides we obtain
\[\sgn(x^2)h(x)(ax^2+b\cdot x +c)/x^2 \le 0.
\]
Let $y=j(x)=x/x^2$, then $(ax^2+b\cdot x+c)/x^2=a+b\cdot y+cy^2$,
and by (\ref{equation_inversion_sign}) we have $\sgn(x^2)h(x)=h(j(x))=h(y)$.
So the inversion of $H$ by $j$ is
\[\{y\in \mathcal{R}_D: h(y) (cy^2+b\cdot y +a) \le 0\},
\]
which is also a half-space. Replacing $y$ with $x$, then we finish the proof.
\end{proof}

\begin{remark}
\label{remark_half_space_inversion}
In a Minkowski space $\mathbb{R}^{n,1}$, under the traditional understanding 
the inversion always maps any $x$ with $x^2\ne 0$ to $x/x^2$ in the same $\mathbb{R}^{n,1}$.
Because $x^2$ can be both positive and negative, the traditional inversion of
$\{x\in \mathbb{R}^{n,1}: ax^2+b\cdot x +c \le 0\}$ is \emph{not}
$\{x\in \mathbb{R}^{n,1}: cx^2+b\cdot x +a \le 0\}$,
even after we ignore some points in a lower dimensional region.
But with our new definition of the inversion $j$ and the use of $h(x)$,
the above inversion formula of a half-space in $\mathcal{R}_D$ is very clean
(Proposition~\ref{proposition_half_space_inversion}).
This may even suggest that the inversion $j$ is more ``natural'' than the traditional inversion in Minkowski space.
\end{remark}

\section{A definition of $V_{n+1}(P)$ on $\mathbb{DH}^{n+1}_1$}
\label{section_polytope_volume_AdS}

For a subset $U$ of $\mathbb{DH}^{n+1}_1$, denote the part in $\mathbb{H}^{n+1}_1$ by $U_{+}$,
and the part in $\mathbb{H}^{n+1}_{1,-}$ by $U_{-}$.
In the following we always assume both $U_{+}$ and $U_{-}$ are measurable sets.

\begin{definition}
\label{definition_finite_standard_volume}
If both the volumes $V_{n+1}(U_{+})$ and $V_{n+1}(U_{-})$ are finite,
then we say that $U$ has a finite \emph{standard} volume $V_{n+1}(U)$,
where $V_{n+1}(U)=V_{n+1}(U_{+})+V_{n+1}(U_{-})$.
Let $\mathcal{U}_0$ be the collection of all those subsets $U$ of $\mathbb{DH}^{n+1}_1$
with finite standard volumes.
\end{definition}

The word \emph{standard} is used in contrast to the new volume 
that we are about to introduce to polytopes in $\mathbb{DH}^{n+1}_1$.
For a subset $U$ of $\mathcal{R}$ (see (\ref{equation_Lorentz_metric})), if $U\in \mathcal{U}_0$,
the volume of $U$ can be computed by integrating the volume element 
$dx_0\cdots dx_n/x_0^{n+1}$ of $\mathcal{R}$ over $U$
\begin{equation*}
V_{n+1}(U)=\int_{U\subset \mathcal{R}} \frac{dx_0\cdots dx_n}{x_0^{n+1}},
\end{equation*}
and $V_{n+1}(U)$ is invariant under isometry.

However, for a region $U$ that sits across $x_0=0$ in $\mathcal{R}$ but $U\not\in \mathcal{U}_0$,
the integral is not well defined at $x_0=0$.
To fix this issue and extend the definition of \emph{volume} to more sets that sit across 
$x_0=0$ in $\mathcal{R}$ but \emph{cannot} be computed by the integral above,
we perturb the volume element to obtain a family of complex measures on $\mathcal{R}$,
and define a volume as the limit of the integral of the complex measures, whenever the limit exists.
This is consistent with the methodology used in Zhang~\cite{Zhang:double_hyperbolic}
to extend the volume on the hyperbolic space to the double hyperbolic space.
Cho and Kim~\cite{ChoKim} employed a slightly different methodology to extend the volume 
on the hyperbolic space to the de Sitter space, using the Klein model under a projective extension.
To clear up some confusion, we shall note that while in Lorentzian geometry sometimes 
a timelike interval may be treated as having an \emph{imaginary} length, 
this imaginariness has nothing to do with the complex perturbation.

We first introduce a complex valued measure $\mu$ on $\mathcal{R}$, 
which will eventually lead to an extended definition of a volume $V_{n+1}(U)$ on $\mathbb{DH}^{n+1}_1$.

\subsection{Definition of $\mu(P)$ on $\mathcal{R}$}

For any $\epsilon\ne 0$, on $\mathcal{R}$ we define a complex valued ``Lorentzian metric''
\begin{equation*}
ds_{\epsilon}=(dx_0^2+\cdots +dx_{n-1}^2-dx_n^2)^{1/2}/(x_0-\epsilon i).
\end{equation*}
The associated inner product (to $ds_{\epsilon}$) on the tangent space at a point in $\mathcal{R}$ 
is $1/(x_0-\epsilon i)^2$ times the standard Minkowski inner product.
We shall note that while the metric is complex valued, there is no complex geometry involved.
The metric $ds_{\epsilon}$ is conformally equivalent to the metric of Minkowski space $\mathbb{R}^{n,1}$ 
by a \emph{conformal factor} of $1/(x_0-\epsilon i)$, and the associated volume element of $ds_{\epsilon}$ is 
$\frac{dx_0\cdots dx_n}{(x_0-\epsilon i)^{n+1}}$.

In this paper we treat the (singular) metric $(dx_0^2+\cdots +dx_{n-1}^2-dx_n^2)^{1/2}/x_0$
on $\mathcal{R}$ as a limit of $ds_{\epsilon}$ as $\epsilon\to 0^{+}$,
and introduce a volume on $\mathcal{R}$ as a limit of the complex valued volume induced by $ds_{\epsilon}$.
We caution that as $\epsilon\to 0^{-}$, the limit may be different.
Let $U$ be a subset of $\mathcal{R}$.
If we say $U$ is in a \emph{finite} region in $\mathcal{R}$, which is a model-specific notion for $\mathcal{R}_D$
and not to be confused with the notion of ``in a \emph{bounded} region in $\mathbb{H}^{n+1}_1$ 
or $\mathbb{H}^{n+1}_{1,-}$'',
it is in the sense that when $\mathcal{R}$ is treated as $\mathbb{R}^{n,1}$,
and it is ok for $U$ to contain points in $x_0=0$.
For a subset $U$ of $\mathcal{R}$, define
\begin{equation}
\label{equation_volume_P_half_space_AdS_epsilon}
\mu_{\epsilon}(U):= \int_{U\subset \mathcal{R}}\frac{dx_0\cdots dx_n}{(x_0-\epsilon i)^{n+1}},
\quad\mu(U):=\lim_{\epsilon\to 0^{+}} \mu_{\epsilon}(U),
\end{equation}
whenever the integral exists.
We say $U$ is $\mu$-\emph{measurable} if $\mu(U)$ exists (finite),
and denote by $\mathcal{V}$ the collection of all $\mu$-measurable sets of $\mathcal{R}$.
We caution that this notion of $\mu$-measurable is model-specific for $\mathcal{R}_D$.

For a subset $U$ of $\mathcal{R}$, if $U\in \mathcal{U}_0$,
as $|\frac{1}{(x_0-\epsilon i)^{n+1}}| \le |\frac{1}{x_0^{n+1}}|$,
then the Lebesgue dominated convergence theorem applies to $\mu_{\epsilon}(U)$,
thus $\mu(U)$ exists and $\mu(U)=V_{n+1}(U)$,
and therefore $U$ is $\mu$-measurable.

Our goal is to extend the definition of $V_{n+1}(U)$ to more sets beyond $\mathcal{U}_0$.
To do so, we start with the $\mu$-measurable sets $U$ of $\mathcal{R}$,
and treat $\mu(U)$ as a potential candidate to extend the definition of $V_{n+1}(U)$ to more sets,
but there are some issues that need to be addressed.
First, for any potential extension of $V_{n+1}(U)$ to be meaningful, it should be invariant under isometry,
but we do not know if $\mu(U)$ is invariant under isometry for all $\mu$-measurable sets $U$ in $\mathcal{V}$;
and even if so, $\mathcal{V}$ is not an algebra in the sense that
there are $\mu$-measurable sets $U$ and $U'$ such that $U\cap U'$ is not $\mu$-measurable.

However, if we restrict the extension of $V_{n+1}(U)$ only to the sets generated by \emph{good half-spaces}
in $\mathbb{DH}^{n+1}_1$, we will show that all the issues above are resolved. 
A good half-space is a half-space whose face has non-degenerate metric,
and let $\mathcal{H}$ (resp. $\mathcal{H}_0$) be the algebra over $\mathbb{DH}^{n+1}_1$ 
generated by half-spaces (resp. good half-spaces) in $\mathbb{DH}^{n+1}_1$
(see Definition~\ref{definition_algebra_half_space}),

\begin{theorem}
\label{theorem_mu_volume_invariant_AdS}
For $n\ge 1$, let $P\in\mathcal{H}_0$ in $\mathbb{DH}^{n+1}_1$ be in a finite region in $\mathcal{R}$,
then $\mu(P)$ exists and is invariant under isometries of $\mathcal{R}_D$
(for isometries $g$ with $g(P)$ also in a finite region in $\mathcal{R}$).
\end{theorem}

\begin{remark}
\label{remark_union_polytopes_AdS}
As $P\in\mathcal{H}_0$ is generated by good half-spaces, then by a property in set theory, 
$P$ can be cut into a disjoint union of good polytopes,
where each one is the intersection of some good half-spaces or their complements (which are also good half-spaces).
So to prove Theorem~\ref{theorem_mu_volume_invariant_AdS},
we can reduce it to just considering the case that 
$P$ is a good polytope in a finite region in $\mathcal{R}$.
\end{remark}

The proof constitutes an essential part of the paper
that will run through Section~\ref{section_proof_mu_volume_invariant_AdS}.

\subsection{Proof of Theorem~\ref{theorem_polytope_volume_finite_invariant} 
(assuming Theorem~\ref{theorem_mu_volume_invariant_AdS})}
\label{section_proof_volume_invariant}

Now we introduce $V_{n+1}(P)$ as a measure on the algebra $\mathcal{H}_0$.

\begin{definition}
\label{definition_volume_polytope_AdS}
For $n\ge 1$, let $P\in\mathcal{H}_0$ in $\mathbb{DH}^{n+1}_1$.
We first cut $P$ into a disjoint union of $P_i$'s with $P_i\in\mathcal{H}_0$,
such that for each $i$ there is an isometry $g_i$ that maps $P_i$ into a finite region in $\mathcal{R}$
(this is always doable for all $P$).
Then define $V_{n+1}(P)$ by $\sum \mu(g_i(P_i))$.
\end{definition}

Assuming Theorem~\ref{theorem_mu_volume_invariant_AdS},
we prove Theorem~\ref{theorem_polytope_volume_finite_invariant} below.

\begingroup
\def\thetheorem{\ref{theorem_polytope_volume_finite_invariant}}
\begin{theorem}
Let $P\in\mathcal{H}_0$ in $\mathbb{DH}^{n+1}_1$,
then $V_{n+1}(P)$ is well defined and invariant under isometry.
\end{theorem}
\addtocounter{theorem}{-1}
\endgroup

\begin{proof}
First fix a cut of $P$ with $P_i$'s, and let $g_i$ be an isometry such that $g_i(P_i)$
is in a finite region in $\mathcal{R}$.
Now for a different cut of $P$ with $P'_j$'s and isometries $g'_j$ 
such that $g'_j(P'_j)$ is in a finite region in $\mathcal{R}$, we have 
\[\sum_i \mu(g_i(P_i))=\sum_i \mu(\bigcup_j g_i(P_i\cap P'_j))
=\sum_{i,j} \mu(g_i(P_i\cap P'_j)).
\]
By Theorem~\ref{theorem_mu_volume_invariant_AdS}, $\mu(g_i(P_i\cap P'_j))=\mu(g'_j(P_i\cap P'_j))$, so
\[\sum_i \mu(g_i(P_i))=\sum_{i,j} \mu(g'_j(P_i\cap P'_j))
=\sum_j \mu(\bigcup_i g'_j(P_i\cap P'_j))
=\sum_j \mu(g'_j(P'_j)).
\]
Hence $\sum_i \mu(g_i(P_i))=\sum_j \mu(g'_j(P'_j))$, and therefore $V_{n+1}(P)$ 
is independent of the choice of the cut as well as the isometries $g_i$.
Thus $V_{n+1}(P)$ is well defined and invariant under isometry.
\end{proof}

\subsection{Measure theory on $\mathbb{DH}^{n+1}_1$}

For completeness, we show that $V_{n+1}(U)$ can be further extended to be defined on $\mathcal{H}'_0$,
the algebra over $\mathbb{DH}^{n+1}_1$ generated by $\mathcal{H}_0$ and $\mathcal{U}_0$
(the collection of all the sets in $\mathbb{DH}^{n+1}_1$ with finite standard volume,
see Definition~\ref{definition_finite_standard_volume}).
But once we are done with defining a volume on $\mathcal{H}'_0$,
we will move our focus back to $\mathcal{H}_0$ for the rest of the paper.

If $U\in \mathcal{H}'_0$, denote by $U^c$ the complement of $U$ in $\mathbb{DH}^{n+1}_1$
(not the complement in $\mathbb{H}^{n+1}_1$, even if $U$ is entirely in $\mathbb{H}^{n+1}_1$).

\begin{proposition}
\label{proposition_measurable_algebra_extention}
Assuming Theorem~\ref{theorem_mu_volume_invariant_AdS}, for $n\ge 1$, 
the definition of $V_{n+1}(U)$ can be further extended to $\mathcal{H}'_0$ in $\mathbb{DH}^{n+1}_1$
as a finitely additive measure.
\end{proposition}

\begin{proof}
For any $U\in\mathcal{H}'_0$, 
assume $U$ is generated by good half-spaces $H_1,\dots, H_k$ in $\mathbb{DH}^{n+1}_1$
and $U_1,\dots, U_l\in\mathcal{U}_0$. By a property in set theory,
$U$ is the disjoint union of regions $E$ in the form $(\bigcap_{i=1}^k A_i)\cap (\bigcap_{j=1}^l B_j)$,
where $A_i$ is either $H_i$ or $H_i^c$ (which is also a good half-space in $\mathbb{DH}^{n+1}_1$),
and $B_j$ is either $U_j$ or $U_j^c$ (but $U_j^c$ is not in $\mathcal{U}_0$).
For a region $E$, if at least one $B_j$ is $U_j$, 
then $E$ has finite standard volume, and thus $V_{n+1}(E)$ exists.

Now consider a region $E$ whose $B_j$ are all $U_j^c$.
Denote $\bigcap_{i=1}^k A_i$ by $P$, and $\bigcup_{j=1}^l U_j$ by $U_0$.
Then $P\in\mathcal{H}_0$ and $U_0\in\mathcal{U}_0$, and
\[E=P\cap (\bigcap U_j^c)=P\cap (\bigcup U_j)^c=P\cap U_0^c=P\setminus U_0.
\]
By Theorem~\ref{theorem_polytope_volume_finite_invariant}
(assuming Theorem~\ref{theorem_mu_volume_invariant_AdS}), $V_{n+1}(P)$ is well defined;
and because $P\cap U_0$ has finite standard volume, so $V_{n+1}(P\cap U_0)$ exists.
As $P$ is the disjoint union of $P\cap U_0$ and $E=P\setminus U_0$,
we define $V_{n+1}(E):=V_{n+1}(P)-V_{n+1}(P\cap U_0)$.
Sum up all those regions $E$, we then obtain $V_{n+1}(U)$.
Similar to the proof of Theorem~\ref{theorem_polytope_volume_finite_invariant},
we can verify that $V_{n+1}(U)$ is well defined on $\mathcal{H}'_0$.
\end{proof}

To summarize, to show that $V_{n+1}(U)$ is well defined on $\mathcal{H}'_0$,
all we are left to do is to prove Theorem~\ref{theorem_mu_volume_invariant_AdS}.
The main idea of our proof of Theorem~\ref{theorem_mu_volume_invariant_AdS}
is to intersect a good polytope $P$ in $\mathbb{DH}^{n+1}_1$ with a moving lower dimensional $\mathbb{DH}^n$
and then integrate the volumes of its moving $(n-1)$-dimensional faces.
This reduces a large part of the proof to the (double) hyperbolic case, 
which was already established in \cite{Zhang:double_hyperbolic}.
We provide some necessary background next.

\section{A definition of $V_n(P)$ on $\mathbb{DH}^n$}
\label{section_polytope_volume_DH}

For the construction of the double hyperbolic space $\mathbb{DH}^n$ \cite{Zhang:double_hyperbolic},
one of the main results was that for regions $P$ generated by half-spaces in $\mathbb{DH}^n$,
a \emph{volume} $V_n(P)$ was introduced on $P$, which is invariant under isometry
and also compatible with the volume elements of both $\mathbb{H}^n$ and $\mathbb{H}^n_{-}$
(see Section~\ref{section_background}).
For some basics of the models of the hyperbolic space, see Cannon \emph{et al.}~\cite{Cannon:hyperbolic}.
By convention, we use the same model names of $\mathbb{H}^n$ to describe $\mathbb{DH}^n$,
but the models for $\mathbb{DH}^n$ are not restricted only to the regions as the names may suggest,
e.g., the \emph{hemisphere model} for $\mathbb{DH}^n$ in not restricted to the region of the ``hemisphere''
but uses the full-sphere instead.

Here we use the hemisphere model to introduce some basic notions of $\mathbb{DH}^n$,
but the notions can be easily extended to other models. 
In a Euclidean space $\mathbb{R}^{n+1}$, let $S^n_r$ be a sphere with radius $r$ centered on $x_0=0$.
Using $S^n_r$ as the hemisphere model for $\mathbb{DH}^n$, 
denote the upper hemisphere with  $x_0>0$ by $\mathbb{H}^n$,
and the lower hemisphere with  $x_0<0$ by $\mathbb{H}^n_{-}$,
and they are glued along the boundary $\partial\mathbb{H}^n$ on $x_0=0$ with a natural identification.
The associated metric for both $\mathbb{H}^n$ and $\mathbb{H}^n_{-}$ is
\[ds=(dx_0^2+\cdots +dx_n^2)^{1/2}/x_0.
\]
We note that $\mathbb{H}^n_{-}$ is not isometric to $\mathbb{H}^n$,
and the length element $ds$ on $\mathbb{H}^n_{-}$ is the negative of the $ds$ on $\mathbb{H}^n$.
An \emph{isometry} of $\mathbb{DH}^n$ is an isometry of $\mathbb{H}^n$ 
that also preserves the mirror points in $\mathbb{H}^n_{-}$. 
A \emph{half-space} in $\mathbb{DH}^n$ is obtained by gluing a half-space in $\mathbb{H}^n$
and its mirror image in $\mathbb{H}^n_{-}$ along the boundary $\partial\mathbb{H}^n$.
In $\mathbb{R}^{n+1}$, for any plane vertical to $x_0=0$ and crossing $S^n_r$,
it cuts $\mathbb{DH}^n$ into two half-spaces.
Note that by definition both $\mathbb{H}^n$ and $\mathbb{H}^n_{-}$
are \emph{not} half-spaces in $\mathbb{DH}^n$.
A \emph{polytope} in $\mathbb{DH}^n$ is a finite intersection of half-spaces in $\mathbb{DH}^n$.
By definition a polytope in $\mathbb{DH}^n$ is always symmetric between 
$\mathbb{H}^n$ and $\mathbb{H}^n_{-}$ through a mirror reflection at $x_0=0$,
but a polytope in $\mathbb{H}^n$ by itself is not a polytope in $\mathbb{DH}^n$.

\begin{remark}
In $\mathbb{DH}^n$, any half-space has non-degenerate metric on the face,
so there is no need to specifically introduce the notion of a \emph{good} half-space 
or a \emph{good} polytope, as we did with $\mathbb{DH}^{n+1}_1$
in Definition~\ref{definition_algebra_half_space}.
\end{remark}

In $\mathbb{DH}^n$, for a region $P$ generated by half-spaces,
it was shown in \cite{Zhang:double_hyperbolic} that $V_n(P)$ may be defined by various equivalent methods. 
The primary method is to use the \emph{upper half-space model},
analogous to the way how we define the volume of good polytopes in $\mathbb{DH}^{n+1}_1$.
However, for the convenience of this paper, we choose to define $V_n(P)$ using the hemisphere model,
which will be mainly used as a middle step to help compute the volume on $\mathbb{DH}^{n+1}_1$.
In $S^n_r$, for a region $P$ generated by half-spaces of $\mathbb{DH}^n$, define
\begin{equation}
\label{equation_volume_P_hemisphere_r_epsilon_variant}
\mu'_{h,\epsilon}(P)= \int_{P\subset S^n_r}\pm\frac{rdx_1\cdots dx_n}{(x_0-\epsilon i)^{n+1}},
\quad \mu'_h(P)=\lim_{\epsilon\to 0^{+}} \mu'_{h,\epsilon}(P),
\end{equation}
with the plus sign for $x_0>0$ and the minus sign for $x_0<0$.
When $P$ is a polytope in $\mathbb{DH}^n$, we have the following important property.

\begin{remark}
A similar notion $\mu_h(P)$ was also introduced in the hemisphere model, 
using a volume element $\pm\frac{rdx_1\cdots dx_n}{x_0(x_0-\epsilon i)^n}$
instead of the volume element $\pm\frac{rdx_1\cdots dx_n}{(x_0-\epsilon i)^{n+1}}$ above
in (\ref{equation_volume_P_hemisphere_r_epsilon_variant}).
It can also be used to define $V_n(P)$, but we won't use $\mu_h(P)$ in this paper.
\end{remark}

\begin{theorem}[\emph{\cite[Theorem~4.6]{Zhang:double_hyperbolic}}]
\label{theorem_volume_invariant}
For $n\ge 0$, let $P$ be a polytope in $\mathbb{DH}^n$.
\begin{enumerate}[(1).]
\item Then in the hemisphere model, $\mu'_h(P)$ exists and is invariant under isometry.
\item \emph{(Uniform boundedness for a fixed $m$)}
For a fixed $m$, if $P$ is the intersection of at most $m$ half-spaces, 
then $\mu'_{h,\epsilon}(P)$ is uniformly bounded for all $P$ and $\epsilon>0$,
as well as $r>0$.
\end{enumerate}
\end{theorem}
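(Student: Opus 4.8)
The plan is to isolate the single genuine difficulty, which lives in a collar of the equator $E=S^n_r\cap\{x_0=0\}$, and to reduce everything else to classical hyperbolic volume. On the region $\{|x_0|\ge\eta\}$ the integrand $r/(x_0-\epsilon i)^{n+1}$ in (\ref{equation_volume_P_hemisphere_r_epsilon_variant}) is uniformly bounded and converges pointwise to $r/x_0^{n+1}$, so dominated convergence yields both existence of the limit and the classical isometry invariance of hyperbolic volume there; thus only a neighborhood of $E$ matters. Using the $\pm$ sign convention together with the reflection symmetry $x_0\mapsto -x_0$ that every polytope in $\mathbb{DH}^n$ enjoys, I would first verify the identity $\mu'_{h,\epsilon}(P)=F(\epsilon)+(-1)^n\,\overline{F(\epsilon)}$ for real $\epsilon>0$, where $F(\epsilon)$ is the integral over the upper hemisphere part $P_+$ and the conjugate arises from sending $\epsilon\mapsto-\epsilon$. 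This makes the problem one-sided and exposes a parity that will be decisive.

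For existence near $E$, I would parametrize the collar by $s=x_0=\sqrt{r^2-|x'|^2}\ge 0$ together with angular coordinates along $E$. A short computation shows that, because of the curvature of $S^n_r$, the pushforward of $r\,dx_1\cdots dx_n$ is $s$ times an even power series in $s$, i.e.\ an \emph{odd} density in $s$. Since $P$ is a polytope its transverse slices are intervals, so the inner $s$-integral of $s^{\,p}/(s-\epsilon i)^{n+1}$ with $p$ odd can be evaluated by an explicit antiderivative; its $s=0$ endpoint contributes a term of order $\epsilon^{\,p-n}$ whose coefficient has phase $i^{\,p-n}$ (and a $\log\epsilon$ exactly when $p=n$), plus a remainder converging as $\epsilon\to 0^+$. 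Substituting into $F(\epsilon)+(-1)^n\overline{F(\epsilon)}$ produces the parity factor $1+(-1)^{p}$, which annihilates \emph{every} such endpoint term because $p$ is odd; what is left converges. This proves that $\mu'_h(P)$ exists, and it simultaneously identifies the surviving $\log\epsilon$ term (present precisely when $n$ is odd) as the mechanism that makes $\mu'_h(P)$ determined by the equatorial polytope $Q=P\cap E$, matching the dichotomy recorded in \cite{Zhang:double_hyperbolic}.

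Invariance I would prove by induction on $n$, the cases $n=0,1$ being direct computations. Reflections are immediate from the symmetry of the integrand, so it suffices to treat the connected isometry group; for this I would establish a Schl\"{a}fli-type differential formula for $\mu'_h$ in $\mathbb{DH}^n$ (cf.\ Milnor~\cite{Milnor:Schlafli}), computing $\mu'_h(P)$ by intersecting $P$ with a moving $\mathbb{DH}^{n-1}$ and integrating the $(n-1)$-dimensional $\mu'_h$-volumes of the resulting faces. Along a smooth family $g_t$ of isometries the integrand of this reduction is an $(n-1)$-dimensional volume that is well defined and invariant by the induction hypothesis, and the dihedral data it depends on is carried rigidly by $g_t$, so the derivative in $t$ vanishes and $\mu'_h(g_t(P))$ is constant. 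Here the uniform boundedness of part (2) is exactly what licenses interchanging $\lim_{\epsilon\to0^+}$ with the slice integral, so that the limit volume is genuinely computed by invariant lower-dimensional limit volumes; this keeps the induction non-circular.

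For the uniform boundedness in part (2) I would argue by a direct estimate: a polytope with at most $m$ faces can be cut, after an isometry, into $C(m)$ pieces each a cone over a face meeting $E$ transversally, the number of pieces depending only on $m$. On each piece the only quantity to control is again the near-equator integral, where the Jacobian factor $s\,ds$ tames the order-$(n+1)$ pole; a bound on $\int_0^{r} s^{\,p}\,|s-\epsilon i|^{-(n+1)}\,ds$ that is uniform in $\epsilon>0$ and in $r>0$ then gives $|\mu'_{h,\epsilon}(P)|\le C(m,n)$. The main obstacle throughout is precisely this equatorial crossing: making the cancellation of the singular $\epsilon$-powers rigorous in the existence step, and securing the simultaneously $\epsilon$- and $r$-uniform estimates for the boundedness step, since this is exactly where the ordinary hyperbolic volume diverges and where the complex perturbation must do all of its work.
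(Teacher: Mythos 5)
First, a point of reference: this paper does not actually prove Theorem~\ref{theorem_volume_invariant}; it is imported verbatim as \cite[Theorem~4.6]{Zhang:double_hyperbolic}, so there is no in-paper proof to compare against. Judged on its own terms, your proposal has the right instincts (the parity identity $\mu'_{h,\epsilon}(P)=F(\epsilon)+(-1)^n\overline{F(\epsilon)}$ is correct, the density in the collar is indeed $s$ times an even analytic function of $s$, and the dimension-reduction via a Schl\"afli-type slicing with part (2) feeding the induction is exactly the mechanism this paper uses in its AdS analogue, Lemma~\ref{lemma_volume_mu_exist}), but two of your concrete steps fail.

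The first gap is in the existence argument. Your cancellation analysis only treats the $s=0$ endpoint of the inner integral $\int_0^{S(\omega)} s^k(s-\epsilon i)^{-(n+1)}\,ds$. The upper endpoint contributes terms of the form $(\epsilon i)^{k-j}(S(\omega)-\epsilon i)^{j-n}/(j-n)$ for $j<n$, and in the combination $F+(-1)^n\overline{F}$ with $k$ odd these reduce to $(\epsilon i)^{k-j}\bigl[(S-\epsilon i)^{j-n}-(-S-\epsilon i)^{j-n}\bigr]$, which for $j-n$ odd survives the limit as $2S(\omega)^{j-n}$. Since the faces of $P$ meet the equator transversally, $S(\omega)$ vanishes linearly at $\partial Q$, so $\int_Q S(\omega)^{j-n}\,d\omega$ diverges for $j\le n-1$; the leading monomial $k=1$ is always present, so this is not a vacuous worry. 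Hence ``what is left converges'' is false term by term, and the genuine difficulty --- cancellation in the $\omega$-integral near $\partial Q$, between different faces or different orders --- is precisely what your argument does not address. The slicing-and-induction route (reduce to uniformly bounded lower-dimensional volumes divided by a radius $1/r_F$ that is integrable in the slicing parameter, then dominate) circumvents this, which is why the paper's Lemma~\ref{lemma_volume_mu_exist} is structured that way.

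The second gap is fatal to your part (2): the absolute-value bound you propose does not exist. Already for the leading density $p=1$ one has $\int_0^r s\,|s-\epsilon i|^{-(n+1)}\,ds \ge c\int_\epsilon^r s^{-n}\,ds \to\infty$ as $\epsilon\to 0^+$ for every $n\ge 1$, so no estimate of $\int_0^r s^p|s-\epsilon i|^{-(n+1)}ds$ uniform in $\epsilon$ can hold. Uniform boundedness of $\mu'_{h,\epsilon}(P)$ is intrinsically a statement about the signed complex integral and requires the same cancellation as part (1); the explicit constant $\frac{m!}{2^{m-1}}V_n(\mathbb{S}^n)$ in Theorem~\ref{theorem_volume_finite} signals an inductive, face-counting argument rather than a pointwise domination. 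Since your induction for invariance explicitly relies on part (2) to interchange $\lim_{\epsilon\to0^+}$ with the slice integral, this error also leaves the induction unclosed.
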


\begin{definition}
\label{definition_volume_P}
Let $P$ be a region in $\mathbb{DH}^n$ generated by half-spaces, 
then in the hemisphere model define $V_n(P)$ by $\mu'_h(P)$.
For $n=0$, $V_0(P)$ is the number of points in $P$.
In other models, we first map $P$ to the hemisphere model by an isometry $g$,
then define $V_n(P)$ by $\mu'_h(g(P))$.
\end{definition}

Particularly, we have the following property of $V_n(P)$ with an explicit bound.

\begin{theorem}[\emph{\cite[Theorem~1.3]{Zhang:double_hyperbolic}}, Uniform boundedness of $V_n(P)$ for a fixed $m$]
\label{theorem_volume_bounded}
For a fixed $m$, let $P$ be a polytope in $\mathbb{DH}^n$
and be the intersection of at most $m$ half-spaces in $\mathbb{DH}^n$, 
then $V_n(P)$ is uniformly bounded for all $P$ with
$|V_n(P)|\le \frac{m!}{2^{m-1}} V_n(\mathbb{S}^n)$.
\end{theorem}

\section{Proof of Theorem~\ref{theorem_mu_volume_invariant_AdS}}
\label{section_proof_mu_volume_invariant_AdS}

We now start proving Theorem~\ref{theorem_mu_volume_invariant_AdS}.
By Remark~\ref{remark_union_polytopes_AdS},
we only need to consider the case that $P$ is a good polytope in a finite region in $\mathcal{R}$ 
(see (\ref{equation_Lorentz_metric})).

\subsection{Existence of $\mu(P)$}

Let $P$ satisfy $t_0\le x_n\le t_1$ in $\mathcal{R}$,
and $P_t$ be the intersection of $P$ and a moving half-space $x_n\le t$
(more precisely, the moving half-space is $\{x\in \mathcal{R}_D: h(x)(x_n-t)\le 0\}$ in $\mathcal{R}_D$,
see (\ref{equation_half_space_R_D})).
By (\ref{equation_volume_P_half_space_AdS_epsilon}) we have
\begin{equation}
\label{equation_volume_P_half_space_AdS_epsilon_copy}
\mu_{\epsilon}(P_t)= \int_{P_t\subset \mathcal{R}}\frac{dx_0\cdots dx_n}{(x_0-\epsilon i)^{n+1}},
\quad\mu(P_t)=\lim_{\epsilon\to 0^{+}} \mu_{\epsilon}(P_t).
\end{equation}
Let $E_t$ be the $n$-face of $P_t$ in the plane $x_n=t$ in $\mathcal{R}$, and
\begin{equation}
\label{equation_volume_epsilon_integration_derivative}
b_{\epsilon}(t):=\int_{E_t}\frac{dx_0\cdots dx_{n-1}}{(x_0-\epsilon i)^{n+1}}
\quad\text{and}\quad
\mu_{\epsilon}(P_t)=\int_{t_0}^{t}b_{\epsilon}(t)dt.
\end{equation}
On the right side of $b_{\epsilon}(t)$,
integrating with respect to $x_0$, we have
\begin{equation}
\label{equation_volume_derivative_epsilon}
b_{\epsilon}(t)
=-\frac{1}{n}\int_{\partial E_t}\pm\frac{dx_1\cdots dx_{n-1}}{(x_0-\epsilon i)^n},
\end{equation}
with a plus sign on a point in $\partial E_t$ if the $x_0$ direction points outward to $E_t$
and a minus sign if the $x_0$ direction points inward.
For any \emph{flat} $(n-1)$-face $F$ of $E_t$, on which the form $dx_1\cdots dx_{n-1}$ is 0,
it does not contribute to (\ref{equation_volume_derivative_epsilon}).
For any other $(n-1)$-face $F$ of $E_t$, it is on an $(n-1)$-dimensional sphere centered on $x_0=0$,
and assume the \emph{Euclidean} radius of $F$ is $r_F$.
By applying (\ref{equation_volume_P_hemisphere_r_epsilon_variant}) 
to (\ref{equation_volume_derivative_epsilon}), we have
\begin{equation}
\label{equation_volume_epsilon}
b_{\epsilon}(t)
=-\frac{1}{n}\sum_{F\subset E_t}\pm\frac{1}{r_F}\mu'_{h,\epsilon}(F),
\end{equation}
with the plus sign for \emph{top} faces and the minus sign for \emph{bottom} faces $F$ of $E_t$,
which we define below.
We remark that the ``$\pm$'' in (\ref{equation_volume_P_hemisphere_r_epsilon_variant}) 
and (\ref{equation_volume_derivative_epsilon}) have different meanings, 
and they offset to get the  ``$\pm$'' in (\ref{equation_volume_epsilon}), whose meaning is also different.

\begin{definition}
\label{definition_face_top_bottom_side}
Fix a half-space $\{x\in \mathcal{R}_D: h(x) (ax^2+b\cdot x +c) \le 0\}$
in $\mathcal{R}_D$ (see (\ref{equation_half_space_R_D})),
and assume $a$ and $b$ are not both 0.
When restricted to $\mathcal{R}$, if its face is a vertical plane to $x_0=0$ (namely $a=0$),
then we call it a \emph{side} face in  $\mathcal{R}$.
Otherwise, for a point on its face in $\mathcal{R}$,
when it moves along the $x_0$-axis away from $x_0=0$,
if the point stays outside (resp. inside) the half-space,
then we call the face in $\mathcal{R}$ a \emph{top} (resp. \emph{bottom}) face in $\mathcal{R}$,
indifferent to the sign of $x_0$ (see Figure~\ref{figure_faces_AdS}).
\end{definition}

\begin{figure}[h]
\centering
  \includegraphics[width=0.25\textwidth]{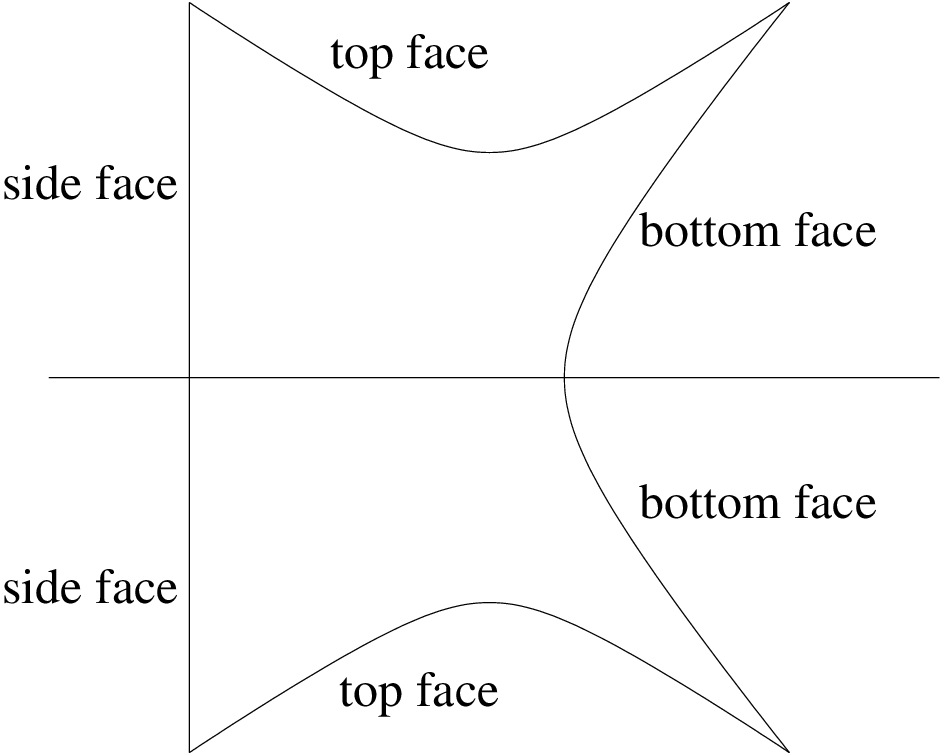}
\caption{The top, bottom, and side faces in $\mathcal{R}$}
\label{figure_faces_AdS}
\end{figure}

It is worth noting that these notions are only meaningful 
when the half-space is restricted to $\mathcal{R}$
(as opposed to considering the whole half-space in $\mathcal{R}_D$),
because when the same half-space is restricted to $\mathcal{R}_{-}$,
the meaning of top and bottom faces is switched.
Because $E_t$ is a side face of $P_t$ in $\mathcal{R}$,
the notions of top, bottom and side faces can also be applied to the $(n-1)$-faces $F$ of $E_t$.

Let $b(t)$ be the pointwise limit of $b_{\epsilon}(t)$ as $\epsilon\to 0^{+}$
\begin{equation}
\label{equation_derivative_special_limit}
b(t):=\lim_{\epsilon\to 0^{+}} b_{\epsilon}(t),
\end{equation}
then by applying Theorem~\ref{theorem_volume_invariant} and Definition~\ref{definition_volume_P}
to (\ref{equation_volume_epsilon}), we have
\[b(t)=-\frac{1}{n}\sum_{F\subset E_t}\pm\frac{1}{r_F}V_{n-1}(F).
\]
Notice that $b(t)$ is continuous for $t$ except at a finite number of points where $E_t$ changes its combinatorial type.
We note that $b(t)$ may also not be bounded at points where $r_F=0$,
but we next show that $b(t)$ is still integrable.

\begin{lemma}
\label{lemma_volume_mu_exist}
For $n\ge 1$, let $P$ in $\mathbb{DH}^{n+1}_1$
be a good polytope in a finite region in $\mathcal{R}$ that satisfies $t_0\le x_n\le t_1$,
and $P_t$ be the intersection of $P$ and $x_n\le t$ in $\mathcal{R}$, with $E_t$ the $n$-face of $P_t$ on $x_n=t$.
\begin{enumerate}[(1).]
\item Let $F$ be any non-side $(n-1)$-face of $E_t$ with radius $r_F$,
then $\frac{1}{r_F}$ is integrable over $t$.
\item Then $b(t)$ is integrable and $\mu(P_t)$ is continuous for $t$ with $\mu(P_t)=\int_{t_0}^t b(t)dt$, and $\mu(P)$ exists.
\end{enumerate}
\end{lemma}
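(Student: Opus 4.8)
The plan is to prove (1) first, since the integrability of each $1/r_F$ is exactly what tames the (possibly unbounded) integrand $b(t)$, and then to deduce (2) by a dominated-convergence argument that simultaneously controls $b_{\epsilon}(t)$ uniformly in $\epsilon$.

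For (1), I would compute $r_F$ explicitly as a function of $t$. A non-side facet of $P$ is defined by $ax^2+b\cdot x+c=0$ with $a\ne 0$ (the side faces, $a=0$, are vertical planes and are excluded), and intersecting it with the Riemannian plane $x_n=t$ yields, after completing the square in $(x_0,\dots,x_{n-1})$, an $(n-1)$-sphere centered on $x_0=0$ whose radius satisfies
\[
r_F^2(t)=t^2+\frac{b_n}{a}\,t+\Big(\frac{\sum_{i=1}^{n-1}b_i^2}{4a^2}-\frac{c}{a}\Big).
\]
This is a monic quadratic in $t$, and a direct computation gives its discriminant as $-(b^2-4ac)/a^2$. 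Since $F$ lies on a facet of a \emph{good} half-space, its metric is non-degenerate, so $b^2-4ac=e^2\ne 0$; hence the discriminant is nonzero and $r_F^2(t)$ has \emph{no repeated root}. This is the crucial point: at any $t_*$ in the range of $t$ with $r_F(t_*)=0$, the function $r_F^2$ vanishes to first order, so $r_F(t)\sim \mathrm{const}\cdot|t-t_*|^{1/2}$ and $1/r_F(t)$ has at worst a $|t-t_*|^{-1/2}$ singularity, which is integrable; away from these (finitely many) zeros $1/r_F$ is continuous and bounded. Thus $1/r_F$ is integrable over $t$, proving (1).

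For (2), I would combine (1) with the uniform bounds already available in the double-hyperbolic setting. Over each of the finitely many $t$-subintervals on which $E_t$ keeps a fixed combinatorial type, the faces $F$ are each an intersection of at most a fixed number of half-spaces, so Theorem~\ref{theorem_volume_finite} bounds $|V_{n-1}(F)|$ by a constant; applying this to (\ref{equation_schlafli_special_AdS}) gives $|b(t)|\le \frac{C}{n}\sum_{F}1/r_F$ with finitely many terms, which is integrable by (1). Hence $b(t)$ is integrable. To identify $\mu(P_t)$ with $\int_{t_0}^t b$, recall from (\ref{equation_volume_epsilon_integration_derivative}) that $\mu_{\epsilon}(P_t)=\int_{t_0}^t b_{\epsilon}(s)\,ds$ and $b_{\epsilon}\to b$ pointwise. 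The uniform boundedness in Theorem~\ref{theorem_volume_invariant}(2) bounds $|\mu'_{h,\epsilon}(F)|$ by a constant independent of $\epsilon$ and $r$, so by (\ref{equation_volume_epsilon}) the same integrable majorant $\frac{C}{n}\sum_F 1/r_F$ dominates $|b_{\epsilon}(t)|$ for all small $\epsilon>0$. The dominated convergence theorem then lets me pass to the limit under the integral sign, giving
\[
\mu(P_t)=\lim_{\epsilon\to 0^+}\int_{t_0}^t b_{\epsilon}(s)\,ds=\int_{t_0}^t b(s)\,ds.
\]
Since $b$ is integrable, $t\mapsto\int_{t_0}^t b$ is absolutely continuous, so $\mu(P_t)$ is continuous in $t$; taking $t=t_1$ shows $\mu(P)=\mu(P_{t_1})$ exists and is finite.

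The main obstacle is precisely the heart of (1): without the goodness hypothesis the facet could be degenerate, the discriminant $-(b^2-4ac)/a^2$ would vanish, $r_F^2$ would acquire a double root, and $1/r_F$ would blow up like $|t-t_*|^{-1}$, which is \emph{not} integrable — this is exactly the mechanism by which $\mu(P)$ can fail to exist for non-good polytopes. A secondary technical point is to ensure the majorant can be chosen uniformly across the finitely many combinatorial-type intervals and uniformly in $\epsilon$, which is what the two uniform-boundedness statements (Theorems~\ref{theorem_volume_finite} and~\ref{theorem_volume_invariant}(2)) provide.
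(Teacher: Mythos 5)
Your proposal is correct and follows essentially the same route as the paper: part (2) is the paper's argument almost verbatim (the integrable majorant $\frac{c}{n}\sum_{F}\frac{1}{r_F}$ obtained from the uniform bound in Theorem~\ref{theorem_volume_invariant}, followed by dominated convergence and absolute continuity of the integral), and part (1) rests on the same key fact that goodness forces $1/r_F$ to have at worst a square-root singularity in $t$. The only difference is presentational: where the paper splits into the Lorentzian case ($(x-v)^2=r^2$, giving $r_F\ge r$ so $1/r_F$ is bounded) and the Riemannian case ($r_F=(t-c-r)^{1/2}(t-c+r)^{1/2}$), you treat both at once by writing $r_F^2(t)$ as a monic quadratic in $t$ whose discriminant $-(b^2-4ac)/a^2$ is nonzero precisely by the goodness hypothesis, so $r_F^2$ has only simple zeros --- a slightly more unified way of reaching the same estimate, and one that makes the failure mechanism of Remark~\ref{remark_half_space_degenerate} (a double root when $b^2-4ac=0$) equally transparent.
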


\begin{proof}
Let $F$ be the intersection of $E_t$ and a non-side $n$-face of $P$.
As $P$ is a good polytope, so the $n$-face has non-degenerate metric,
which is either Lorentzian or Riemannian.

If the $n$-face's metric is Lorentzian,
then the $n$-face in $\mathcal{R}$ is on a surface $(x-v)^2=r^2$ with $r>0$,
where $v$ is a vector in the plane $x_0=0$ with $c$ the $x_n$-coordinate of $v$.
Because $x_n$ is a timelike direction, so the component of $r^2$ on the $x_n$ direction is $-(t-c)^2$,
and
\begin{equation}
\label{equation_radius_Lorentzian}
r_F^2=r^2+(t-c)^2,
\end{equation}
Therefore $r_F\ge r$ and $\frac{1}{r_F}\le \frac{1}{r}$.
As $r$ is a constant independent of $t$ and $P$ is in a finite region in $\mathcal{R}$,
so $\frac{1}{r_F}$ is integrable over $t$.

If the $n$-face's metric is Riemannian,
then the $n$-face in $\mathcal{R}$ is on a surface $(x-v)^2=-r^2$ with $r>0$,
where $v$ is a vector in the plane $x_0=0$ with $c$ the $x_n$-coordinate of $v$.
So
\[r_F^2=-r^2+(t-c)^2.
\]
This surface in $\mathcal{R}$ is a two-sheeted hyperboloid, without loss of generality,
by symmetry we only consider the sheet in the positive direction of $x_n$-axis and assume $t\ge c+r$.
Thus $r_F=((t-c)^2-r^2)^{1/2}=(t-c-r)^{1/2}(t-c+r)^{1/2}$.
When $t$ is close to $c+r$, $\frac{1}{r_F}$ is in the order of $O((t-c-r)^{-1/2})$,
whose integration over $t$ is in the order of $O((t-c-r)^{1/2})$,
so $\frac{1}{r_F}$ is integrable over $t$. 

Let $P$ be the intersection of at most $m$ half-spaces in $\mathbb{DH}^{n+1}_1$.
By Theorem~\ref{theorem_volume_invariant},
there is a constant $c_1>0$, depending only on $m$ but not $\epsilon$, 
such that for all non-side $(n-1)$-faces $F$ of $E_t$ (and for all $t$) we have
$|\mu'_{h,\epsilon}(F)|\le c_1$. 
Set $g(t)=\frac{c_1}{n}\sum_{F\subset E_t} \frac{1}{r_F}$, 
then by (\ref{equation_volume_epsilon}) we have $|b_{\epsilon}(t)| \le g(t)$.
As $\frac{1}{r_F}$ is integrable over $t$, so $g(t)$ is integrable.
Therefore Lebesgue dominated convergence theorem applies to $\{b_{\epsilon}(t)\}$ and $b(t)$ is integrable.
Hence by (\ref{equation_volume_P_half_space_AdS_epsilon_copy}),
(\ref{equation_volume_epsilon_integration_derivative}) and (\ref{equation_derivative_special_limit}),
we have
\[\mu(P_t)=\lim_{\epsilon\to 0^{+}} \mu_{\epsilon}(P_t)
=\lim_{\epsilon\to 0^{+}}\int_{t_0}^t b_{\epsilon}(t)dt=\int_{t_0}^t b(t)dt.
\]
So $\mu(P_t)$ is continuous for $t$ and $\mu(P)$ exists. 
\end{proof}

\begin{remark}
\label{remark_half_space_degenerate}
In Lemma~\ref{lemma_volume_mu_exist}, it is important that we require $P\in\mathcal{H}_0$
instead of just $P\in\mathcal{H}$, namely, require the $n$-faces of $P$ to have non-degenerate metrics.
If not, assume $P$ is a polytope in $\mathcal{R}$ bounded by $0\le x_n\le 1$ and $x^2\le 0$
(whose face is $x^2=0$ and has degenerate metric).
Let $F_0$ be the intersection $x^2=0$ and $x_n=t$, then $r_{F_0}=|t|$.
Thus $\frac{1}{r_{F_0}}=\frac{1}{|t|}$, not integrable at $t=0$, and $\mu(P)$ indeed does not exist.
\end{remark}

\begin{remark}
\label{remark_volume_unbounded}
Unlike in $\mathbb{DH}^n$ where a polytope's volume is uniformly bounded 
if the polytope's number of facets is bounded (Theorem~\ref{theorem_volume_bounded}),
we do not have a similar \emph{uniform boundedness} result for $\mathbb{DH}^{n+1}_1$.
Let $P_t$ in $\mathbb{DH}^{n+1}_1$ be a good polytope in a finite region in $\mathcal{R}$
bounded by $x^2\le 1$ and $0\le x_n\le t$.
By Lemma~\ref{lemma_volume_mu_exist}, we have
$\mu(P_t)=-\frac{1}{n}V_{n-1}(\mathbb{DH}^{n-1}) \int_0^t\frac{1}{r_F} dt$, 
where $r_F=(1+t^2)^{1/2}$ (see (\ref{equation_radius_Lorentzian})).
As $t\to +\infty$, $\mu(P_t)$ is not bounded.
\end{remark}

\subsection{Invariance of $\mu(P)$}

To prove the invariance of $\mu(P)$, we first show that $\mu(P)$ 
is invariant under some basic isometries of $\mathcal{R}_D$ (see Section~\ref{section_isometries_R_D}).

\begin{lemma}
\label{lemma_preserve_x0}
For $n\ge 1$, let $g$ be an isometry of $\mathcal{R}_D$.
If $g$ also preserves $\mathcal{R}$ and is an isometry of $\mathbb{R}^{n,1}$,
then for each point in $\mathcal{R}$, $g$ preserves its $x_0$-coordinate.
\end{lemma}

\begin{proof}
In $\mathcal{R}$, for any point $A$ with coordinate $(x_0,x_1\dots,x_n)$,
let $A_0=(0,x_1\dots,x_n)$, then $A-A_0$ is orthogonal to $x_0=0$.
Since $g$ is an isometry of $\mathcal{R}_D$, so $g$ preserves $x_0=0$ and $g(A_0)$ is in $x_0=0$.
As $g$ is also an isometry of $\mathbb{R}^{n,1}$, 
so $g(A)-g(A_0)$ is orthogonal to $x_0=0$ and has the same length as $A-A_0$.
Then $g(A)-g(A_0)$ is either $(x_0,0,\dots,0)$ or $(-x_0,0,\dots,0)$.
As an isometry of $\mathcal{R}_D$ does not map any point in the upper half-space $x_0>0$ 
into the lower half-space $x_0<0$ in $\mathcal{R}$ (by Remark~\ref{remark_isometry_R_D}),
so $g(A)-g(A_0)$ must be $(x_0,0,\dots,0)$, and thus $g(A)$ preserves the $x_0$-coordinate of $A$.
\end{proof}

More generally, if $g$ preserves $\mathcal{R}$ (so $g$ also preserves $\partial\mathcal{R}$),
then we have the following.

\begin{lemma}
\label{lemma_invariance_preserve_R}
For $n\ge 1$, let $P$ in $\mathbb{DH}^{n+1}_1$
be a good polytope in a finite region in $\mathcal{R}$.
If $g$ is an isometry of $\mathcal{R}_D$ that also preserves $\mathcal{R}$,
then $g(P)$ is in a finite region in $\mathcal{R}$ and $\mu(g(P))=\mu(P)$.
\end{lemma}

\begin{proof}
As $g$ preserves $\mathcal{R}$, by a property of Minkowski space, 
$g$ can be written as a combination of  an isometry of Minkowski space and a similarity, with no inversion involved.
So $g(P)$ is in a finite region in $\mathcal{R}$.
In $\mathcal{R}$, let $g(x_0,\dots,x_n)=(y_0,\dots,y_n)$, then by Lemma~\ref{lemma_preserve_x0}, 
there is a constant $\lambda$, such that $y_0=\lambda x_0$.
By Remark~\ref{remark_isometry_R_D}, $\lambda>0$.
As $g$ is an isometry of $\mathcal{R}_D$,
so $g^{\ast}$ maps the volume element $\frac{dy_0\cdots dy_n}{y_0^{n+1}}$ in $\mathcal{R}$ 
into the volume element $\frac{dx_0\cdots dx_n}{x_0^{n+1}}$ (also in $\mathcal{R}$)
with the appropriate orientation of each coordinate system. 
By (\ref{equation_volume_P_half_space_AdS_epsilon}),
\[\mu_{\epsilon}(g(P))
=\int_{g(P)\subset \mathcal{R}}\frac{dy_0\cdots dy_n}{(y_0-\epsilon i)^{n+1}}
=\int_{P\subset \mathcal{R}}\frac{y_0^{n+1}dx_0\cdots dx_n}{(y_0-\epsilon i)^{n+1}x_0^{n+1}}
=\int_{P\subset \mathcal{R}}\frac{dx_0\cdots dx_n}{(x_0-\lambda^{-1}\epsilon i)^{n+1}},
\]
where the last step is because $y_0=\lambda x_0$.
Because $\lambda>0$, so as $\epsilon\to 0^{+}$, 
by applying Lemma~\ref{lemma_volume_mu_exist} to both $g(P)$ and $P$,
we have $\mu(g(P))=\mu(P)$.
\end{proof}

\begin{remark}
\label{remark_invariant_preserve_R}
As a special case, if $g$ is an isometry of $\mathcal{R}_D$ that is also either an isometry of $\mathbb{R}^{n,1}$
or a  similarity $s_{\lambda}$ with $\lambda>0$ (see (\ref{equation_similarity})), then $\mu(g(P))=\mu(P)$.
\end{remark}

Our next goal is to show that $\mu(P)$ is invariant under inversion 
(Lemma~\ref{lemma_mu_invariant_inversion}),
the most important and difficult step to prove the invariance of $\mu(P)$.
For $r>0$, let $P'_r$ be the intersection of $P$ and $x^2\le r^2$ in $\mathcal{R}$,
and $P'_{r,-}$ be the intersection of $P$ and $x^2\le -r^2$ in $\mathcal{R}$.
We first have the following result for $P'_r$, a middle step before proving
Lemma~\ref{lemma_mu_invariant_inversion}.

\begin{lemma}
\label{lemma_polytope_radial_continuous_lorentzian}
For $n\ge 1$, let $P$ in $\mathbb{DH}^{n+1}_1$
be a good polytope in a finite region in $\mathcal{R}$,
then $\mu(P'_r)$ is continuous for $r>0$.
\end{lemma}

\begin{proof}
Assume $P$ satisfies $0\le x_n\le t_1$ in $\mathcal{R}$ (the case of $x_n\le 0$ can be proved similarly).
Let $E_t$ be the intersection of $P$ and $x_n=t$.
For $r>0$, let $E_t(r)$ be the intersection of $P'_r$ and $x_n=t$. Let
\begin{equation}
\label{equation_a_r_t}
a_r(t):=-\frac{1}{n}\sum_{F\subset E_t(r)}\pm\frac{1}{r_F}V_{n-1}(F),
\end{equation}
with the plus sign for top faces and the minus sign for bottom faces $F$ of $E_t(r)$
(see Definition~\ref{definition_face_top_bottom_side}), and $r_F$ is the radius of $F$.
Let $G$ be the $(n-1)$-face of $E_t(r)$ on $x^2=r^2$, and $r_G$ be the radius of $G$.
For a fixed $r_0>0$, denote $G$ at $r=r_0$ by $G_0$, and the radius of $G_0$ by $r_{G_0}$.
Applying Lemma~\ref{lemma_volume_mu_exist} to $a_r(t)$ (with $r$ fixed), then $\mu(P'_r)=\int_{0}^{t_1} a_r(t)dt$.
Our goal is to show that $\mu(P'_r)$ is continuous at $r=r_0$.

Let $P$ be the intersection of at most $m$ half-spaces,
then $P'_r$ is the intersection of at most $m+1$ half-spaces.
Then for any $(n-1)$-face $F$ of $E_t(r)$, by Theorem~\ref{theorem_volume_bounded}, 
there is a constant $c$ depending only on $m$, such that $|V_{n-1}(F)|\le c$. So
\begin{equation}
\label{equation_a_r_t_bound}
|a_r(t)| \le \frac{c}{n}\left(\sum_{F\subset E_t(r)} \frac{1}{r_F}\right)
\le \frac{c}{n}\left(\sum_{F\subset E_t} \frac{1}{r_F} +\frac{1}{r_G}\right).
\end{equation}

Recall that $P'_r$ is the intersection of $P$ and $x^2\le r^2$ in $\mathcal{R}$.
As $x_n$ is a timelike direction, so $r_G^2=r^2+t^2$, therefore $r_G\ge r$.
For a fixed $r_0>0$, we have $r_{G_0}\ge r_0>0$ for the entire region of $t\in [0,t_1]$.
Thus when $r$ is close to $r_0$, we have $r_G\ge \frac{r_{G_0}}{2}$ for \emph{all} $t\in [0,t_1]$, so 
\[|a_r(t)| \le \frac{c}{n}\left(\sum_{F\subset E_t} \frac{1}{r_F} +\frac{2}{r_{G_0}}\right).
\]
Notice that $r_{G_0}$ depends only on $t$ but not $r$, and so does $r_F$ for $F\subset E_t$.
Denote the right side by $g(t)$,
then by Lemma~\ref{lemma_volume_mu_exist}, $g(t)$ is integrable over $t$.
Thus on the entire region of $[0,t_1]$,  as $r\to r_0$,
Lebesgue dominated convergence theorem applies to $\{a_r(t)\}$, and
\[\lim_{r\to r_0}\int_{0}^{t_1} a_r(t)dt = \int_{0}^{t_1} a_{r_0}(t)dt.
\]
As $\mu(P'_r)=\int_{0}^{t_1} a_r(t)dt$, so $\mu(P'_r)$ is continuous at $r=r_0$.
\end{proof}

We have a similar result for $P'_{r,-}$, another middle step before proving
Lemma~\ref{lemma_mu_invariant_inversion}, with a very similar but slightly different proof 
than that of Lemma~\ref{lemma_polytope_radial_continuous_lorentzian}.

\begin{lemma}
\label{lemma_polytope_radial_continuous_riemannian}
For $n\ge 1$, let $P$ in $\mathbb{DH}^{n+1}_1$
be a good polytope in a finite region in $\mathcal{R}$,
then $\mu(P'_{r,-})$ is continuous for $r>0$.
\end{lemma}

\begin{proof}
Assume $P$ satisfies $0\le x_n\le t_1$ in $\mathcal{R}$ (the case of $x_n\le 0$ can be proved similarly).
Let $E_t$ be the intersection of $P$ and $x_n=t$.
For $r>0$, let $E_t(r)$ be the intersection of $P'_{r,-}$ and $x_n=t$. 
To reduce repetition of the proof, with a slight abuse of notation, except that $P'_{r,-}$ replaces $P'_r$,
for convenience we use the same notations as in the proof of Lemma~\ref{lemma_polytope_radial_continuous_lorentzian},
e.g., $G$, $r_G$, $r_0$, $G_0$, $r_{G_0}$, and $a_r(t)$ as in (\ref{equation_a_r_t}).
Applying Lemma~\ref{lemma_volume_mu_exist} to $a_r(t)$ (with $r$ fixed), 
then $\mu(P'_{r,-})=\int_{0}^{t_1} a_r(t)dt$.
For a fixed $r_0>0$, our goal is to show that $\mu(P'_{r,-})$ is continuous at $r=r_0$.
By Theorem~\ref{theorem_volume_bounded}, the same as (\ref{equation_a_r_t_bound}),
there is a constant $c$ depending only on $m$, such that 
\begin{equation}
\label{equation_radius_inverse_estimate}
|a_r(t)| \le \frac{c}{n}\left(\sum_{F\subset E_t(r)} \frac{1}{r_F}\right)
\le \frac{c}{n}\left(\sum_{F\subset E_t} \frac{1}{r_F} +\frac{1}{r_G}\right).
\end{equation}

Now the proof starts to differ from the proof of Lemma~\ref{lemma_polytope_radial_continuous_lorentzian},
mainly because we cannot modify the $\frac{1}{r_G}$ term above to apply 
Lebesgue dominated convergence theorem to $\{a_r(t)\}$ for the \emph{entire} region of $t\in [0,t_1]$.
Recall that $P'_{r,-}$ is the intersection of $P$ and $x^2\le -r^2$ in $\mathcal{R}$.
As $x_n$ is a timelike direction, so $r_G^2=-r^2+t^2$, 
and as we only consider $t\ge 0$, hence
\[r_G=(t^2-r^2)^{1/2}=(t-r)^{1/2}(t+r)^{1/2}.
\]
Thus when $t$ is near $r$, we have $\frac{1}{r_G}=O((t-r)^{-1/2})$ and
\[\int_{r}^{t}\frac{1}{r_G}dt=O((t-r)^{1/2}),
\]
which converges to $0$ as $(t-r)\to 0^{+}$.
For the right side of (\ref{equation_radius_inverse_estimate}), except for $1/r_G$,
the remaining part does not depend on $r$, 
and by Lemma~\ref{lemma_volume_mu_exist} is integrable over $t$.
Hence for a fixed $r_0>0$, for any $\epsilon>0$, there is $\delta>0$, such that for \emph{all} $r>0$, we have
\begin{equation}
\label{equation_integral_estimate_middle}
\int_{r_0-\delta}^{r_0+\delta} |a_r(t)|\,dt < \epsilon.
\end{equation}

For $t\in [0,r_0-\delta]$, when $r>r_0-\delta$,
$\frac{1}{r_G}$ does not contribute to (\ref{equation_radius_inverse_estimate}).
For $t\in [r_0+\delta, t_1]$, notice that $r_{G_0}>0$.
Thus when $r\to r_0$, we have $r_G\ge \frac{r_{G_0}}{2}$ for $t\in [r_0+\delta, t_1]$ 
(see Figure~\ref{figure_polar_AdS}, but we caution that this is not so for \emph{all} $t\in [r_0,t_1]$).
So for the combined region of $[0,r_0-\delta]$ and $[r_0+\delta, t_1]$ (denoted by $U$),
we have
\[|a_r(t)| \le \frac{c}{n}\left(\sum_{F\subset E_t} \frac{1}{r_F} +\frac{2}{r_{G_0}}\right).
\] 
Denote the right side by $g(t)$, 
then by Lemma~\ref{lemma_volume_mu_exist}, $g(t)$ is integrable over $t$ on $U$.
Let $r\to r_0$, Lebesgue dominated convergence theorem applies to $\{a_r(t)\}$ on $U$, and
\begin{equation}
\label{equation_integral_estimate_middle_outside}
\lim_{r\to r_0} \int_{U} a_r(t)\,dt
= \int_{U} a_{r_0}(t)\,dt.
\end{equation}

Let $\epsilon\to 0$, by combining (\ref{equation_integral_estimate_middle})
and (\ref{equation_integral_estimate_middle_outside}), we have 
\[\lim_{r\to r_0} \int_{0}^{t_1} a_r(t)\,dt
= \int_{0}^{t_1} a_{r_0}(t)\,dt.
\]
As $\mu(P'_{r,-})=\int_{0}^{t_1} a_r(t)dt$, so $\mu(P'_{r,-})$ is continuous at $r=r_0$. 
\end{proof}

\begin{figure}[h]
\centering
\resizebox{.25\textwidth}{!}
  {\input{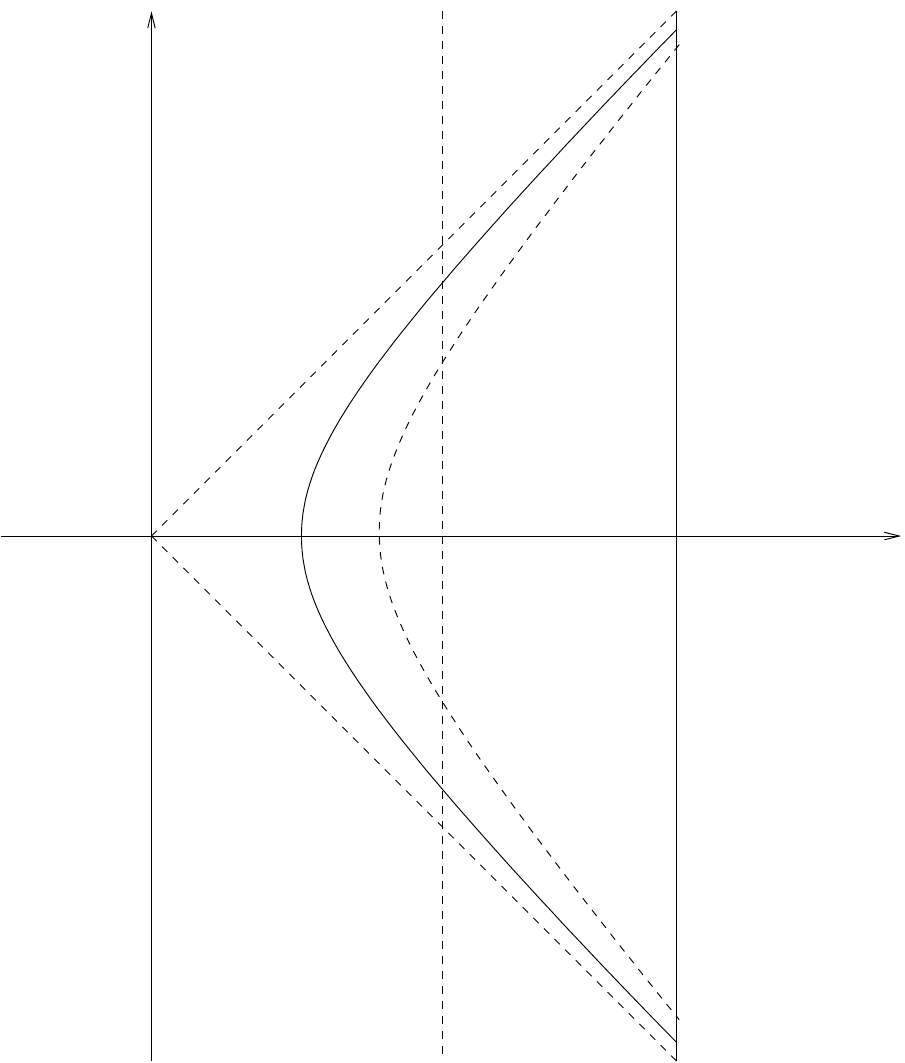_t}}
\caption{The $(n-1)$-face $G_0$ with radius $r_{G_0}$,
on the intersection of $x^2=-r_0^2$ and $x_n=t$ in $\mathcal{R}$}
\label{figure_polar_AdS}
\end{figure}

Now we are ready to show that $\mu(P)$ is invariant under inversion for both $j$ and $j_{-}$
(see Definition~\ref{definition_Minkowski_inversion} and \ref{definition_Minkowski_inversion_negative}).

\begin{lemma}
\label{lemma_mu_invariant_inversion}
For $n\ge 1$, let $P$ and $Q$ in $\mathbb{DH}^{n+1}_1$
be good polytopes in a finite region in $\mathcal{R}$,
and $Q$ is the inversion of $P$ with either $Q=j(P)$ or $Q=j_{-}(P)$, then $\mu(P)=\mu(Q)$.
\end{lemma}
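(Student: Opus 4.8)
The plan is to prove the statement for $j$ in full and obtain the $j_{-}$ statement by the same computation. Since $Q$ is required to lie in a finite region in $\mathcal{R}$ and $j$ sends the light cone $\{x^2=0\}$ (including the origin) to $\partial\mathcal{R}$, the polytope $P$ is automatically bounded away from the light cone; hence $x^2$ stays in a fixed interval $[\delta,M]$ with $\delta>0$ on $P$. As $j$ maps $\{x\in\mathcal{R}:x^2>0\}$ to itself, the spacelike case ($x^2>0$ on $P$) gives $Q=j(P)\subset\mathcal{R}$; the timelike case ($x^2<0$ on $P$) is handled identically with $j_{-}$ in place of $j$, since $j_{-}$ maps $\{x\in\mathcal{R}:x^2<0\}$ to itself. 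These two pairings are exactly what the ``either $Q=j(P)$ or $Q=j_{-}(P)$'' alternative records. By Proposition~\ref{proposition_half_space_inversion} the image $Q$ is again a good polytope, since the substitution $a\leftrightarrow c$ preserves the discriminant $b^2-4ac$ and hence the metric type of each face.

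First I would perform the change of variables $y=j(x)=x/x^2$ in the regularized integral $\mu_{\epsilon}(Q)$ of (\ref{equation_volume_P_half_space_AdS_epsilon}). Since $j$ is conformal with factor $1/x^2$ and $\mathcal{R}$ has dimension $n+1$, its Jacobian has absolute value $1/(x^2)^{n+1}$; and from $y_0=x_0/x^2$ one gets $(y_0-\epsilon i)^{n+1}=(x_0-\epsilon i\,x^2)^{n+1}/(x^2)^{n+1}$. The factor $(x^2)^{n+1}$ cancels exactly against the Jacobian, yielding the clean identity
\[
\mu_{\epsilon}(Q)=\int_{P\subset\mathcal{R}}\frac{dx_0\cdots dx_n}{(x_0-\epsilon i\,x^2)^{n+1}}.
\]
Thus the inversion restores the region of integration to $P$ itself but replaces the perturbation $x_0-\epsilon i$ by the $x^2$-weighted perturbation $x_0-\epsilon i\,x^2$.

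The heart of the proof is then the regularization-compatibility claim
\[
\lim_{\epsilon\to0^+}\int_{P}\frac{dx_0\cdots dx_n}{(x_0-\epsilon i\,x^2)^{n+1}}=\lim_{\epsilon\to0^+}\int_{P}\frac{dx_0\cdots dx_n}{(x_0-\epsilon i)^{n+1}}=\mu(P).
\]
Because $x^2\ge\delta>0$ on $P$, both perturbations approach the singular hyperplane $x_0=0$ from the same (lower) half-plane, and the singular behaviour at $x_0=0$ depends only on the positive value $x^2|_{x_0=0}$; freezing $x^2$ at that value is the harmless rescaling $\epsilon\mapsto\epsilon\,x^2$, which cannot change the limit, while the remaining difference is strictly less singular along $x_0=0$. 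To make this rigorous I would rerun the slicing machinery of Lemma~\ref{lemma_volume_mu_exist}: slice $P$ by $x_n=t$, express each regularized volume as $\int b_{\epsilon}(t)\,dt$, and reduce the inner hemisphere integrals to $(n-1)$-dimensional $\mathbb{DH}^{n-1}$ volumes, which by Theorem~\ref{theorem_volume_invariant} are insensitive to such a positive reweighting of $\epsilon$. The uniform bounds of Lemma~\ref{lemma_volume_mu_exist} then license dominated convergence, and Lemma~\ref{lemma_polytope_radial_continuous} controls the passage to the limit on the radial shells $\{x^2\le r^2\}$ used to exhaust $P$.

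The main obstacle is precisely this compatibility step: unlike a constant rescaling of $\epsilon$, the weight $x^2$ varies over $P$ and depends on $x_0$ itself, so it cannot be pulled out of the inner $x_0$-integral, and the argument must show that this $x_0$-dependence perturbs the integrand only by a term less singular along $x_0=0$ (which therefore contributes $0$ in the limit). Once this is in hand, Lemma~\ref{lemma_invariance_isometry_Minkowski_similarity} already handles the Minkowski-isometry and similarity factors of a general isometry, so together with the present inversion invariance every isometry of $\mathcal{R}_D$ preserves $\mu$; the sign identity (\ref{equation_mu_epsilon_P_plus}), together with the $(-1)^{n+1}$ factor in the $\mathcal{R}_{-}$ volume element, reconciles the bookkeeping in any variant where the inversion momentarily carries a timelike piece into $\mathcal{R}_{-}$, completing the proof.
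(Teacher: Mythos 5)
Your reduction via the change of variables $y=x/x^2$ is correct as a formal identity for each fixed $\epsilon>0$ (the Jacobian $1/|x^2|^{n+1}$ does cancel against the $(x^2)^{n+1}$ coming from $y_0-\epsilon i=(x_0-\epsilon i\,x^2)/x^2$, since $x^2>0$ throughout $P$ in the $j$ case), and your opening observation that the hypotheses force $P$ to be bounded away from the light cone and entirely spacelike for $j$ (resp.\ timelike for $j_{-}$) is right. But the step you yourself flag as the heart of the matter --- that $\lim_{\epsilon\to0^+}\int_P dx_0\cdots dx_n/(x_0-\epsilon i\,x^2)^{n+1}$ equals $\mu(P)$ --- is a genuine gap, and the tools you cite do not close it. Theorem~\ref{theorem_volume_invariant} and the slicing machinery of Lemma~\ref{lemma_volume_mu_exist} are proved for the specific regularization $(x_0-\epsilon i)^{n+1}$: after slicing by $x_n=t$ and integrating by parts in $x_0$, the inner integrals $\mu'_{h,\epsilon}(F)$ live on Euclidean spheres $F$ centered on $x_0=0$ in the slice, and on such a sphere $x^2=x_0^2+\cdots+x_{n-1}^2-t^2$ is \emph{not} constant (unless the sphere happens to be centered at the spatial origin), so the weight cannot be absorbed as a constant rescaling $\epsilon\mapsto\epsilon x^2$ to which Theorem~\ref{theorem_volume_invariant} would be ``insensitive''; you would in effect have to reprove the uniform boundedness and convergence results of \cite{Zhang:double_hyperbolic} for a variable weight. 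The pointwise heuristic that the difference of kernels is ``less singular'' also does not suffice on its own: because the pole has order $n+1$, a crude bound on the difference of the two kernels integrated across $x_0=0$ is of size $\epsilon^{2-n}$, which does not vanish for $n\ge 2$, so only the full cancellation structure (which is exactly what is not supplied) can rescue the claim. Your invocation of Lemma~\ref{lemma_polytope_radial_continuous} to ``control radial shells'' does not attach to anything in your argument, since you never exhaust $P$ by shells.

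For contrast, the paper avoids comparing regularizations altogether: it sets $P_r=P\cap\{x^2\le r^2\}$ and $Q_r=Q\cap\{x^2\le r^2\}$, shows via similarity invariance and the radial reflection symmetry of $j$ near $x^2=1$ that the thin shells $P_{r+\Delta r}\setminus P_r$ and $Q_{1/r}\setminus Q_{1/(r+\Delta r)}$ have equal $\mu$, deduces $\frac{d}{dr}\bigl(\mu(P_r)+\mu(Q_{1/r})\bigr)=0$ away from finitely many $r$, and then uses the continuity of $\mu(P_r)$ and $\mu(Q_{1/r})$ from Lemma~\ref{lemma_polytope_radial_continuous} to conclude that $\mu(P_r)+\mu(Q_{1/r})$ is a constant, equal to $\mu(Q)$ for small $r$ and to $\mu(P)$ for large $r$. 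If you wish to pursue your route, the regularization-independence statement must be proved from scratch; as written, the proposal does not constitute a proof.
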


\begin{proof}
We first consider the case of $Q=j(P)$,
which means that both $P$ and $Q$ are inside $\{x\in\mathcal{R}: x^2>0\}$.
For $r>0$, let $P'_r$ be the intersection of $P$ and $x^2\le r^2$ in $\mathcal{R}$,
and $Q'_r$ be the intersection of $Q$ and $x^2\le r^2$ in $\mathcal{R}$.
For $\Delta r>0$, denote $P'_{r+\Delta r}\setminus P'_r$ by $U$.
Except at a finite number of $r$'s where $\mu(P'_r)$ is not differentiable over $r$,
when $\Delta r\to 0$,
\begin{equation}
\label{equation_P_slice_approximation}
\mu(U)=\mu(P'_{r+\Delta r})-\mu(P'_r) 
\approx \frac{d\mu(P'_r)}{dr} \Delta r.
\end{equation}
Now scale $U$ by a factor of $1/r$ at the origin, and denote the resulting region by $U'$.
As $\mu$ is invariant under similarity (see Remark~\ref{remark_invariant_preserve_R}), so $\mu(U')=\mu(U)$.
Denote $Q'_{1/r}\setminus Q'_{1/(r+\Delta r)}$ by $V$.
Similarly, except at a finite number of $r$'s, when $\Delta r\to 0$,
\begin{equation}
\label{equation_Q_slice_approximation}
\mu(V)=\mu(Q'_{1/r})-\mu(Q'_{1/(r+\Delta r)})
\approx -\frac{d\mu(Q'_{1/r})}{dr} \Delta r.
\end{equation}

Scale $V$ by a factor of $r$ at the origin and denote the resulting region by $V'$,
as $\mu$ is invariant under similarity, so $\mu(V')=\mu(V)$.
Because $Q=j(P)$, so $V=j(U)$ and thus $V'=j(U')$.
Hence $U'$ and $V'$ are reflections of each other in the radial direction near the surface $x^2=1$,
therefore $\mu(U')\approx \mu(V')$ as $\Delta r\to 0$, and thus $\mu(U)\approx \mu(V)$.

Compare (\ref{equation_P_slice_approximation}) to (\ref{equation_Q_slice_approximation}),
then
\[\frac{d\mu(P'_r)}{dr} \Delta r
\approx -\frac{d\mu(Q'_{1/r})}{dr} \Delta r.
\]
Drop the $\Delta r$ on both sides, then it must be an equality, thus except for some $r$'s,
\[\frac{d\mu(P'_r)}{dr} + \frac{d\mu(Q'_{1/r})}{dr}=0.
\]
By Lemma~\ref{lemma_polytope_radial_continuous_lorentzian},
both $\mu(P'_r)$ and $\mu(Q'_{1/r})$ are continuous for $r>0$, so $\mu(P'_r)+\mu(Q'_{1/r})$ is a constant $c$.
As $P$ and $Q$ are in a finite region in $\mathcal{R}$, when $r$ is very big we have $P'_r=P$,
so $Q'_{1/r}$ is the empty set and therefore $c$ is $\mu(P)$.
As $r$ decreases, in $\mathcal{R}$ the surface $x^2=r^2$ sweeps through $P$
and the surface $x^2=1/r^2$ sweeps through $Q$.
When $r$ is close to $0^{+}$ we have $Q'_{1/r}=Q$, 
so $P'_r$ is the empty set and therefore $c$ is also $\mu(Q)$.
So $\mu(P)=\mu(Q)$.

Next we consider the case of $Q=j_{-}(P)$,
which means that both $P$ and $Q$ are inside $\{x\in\mathcal{R}: x^2<0\}$.
For $r>0$, let $P'_{r,-}$ be the intersection of $P$ and $x^2\le -r^2$ in $\mathcal{R}$,
and $Q'_{r,-}$ be the intersection of $Q$ and $x^2\le -r^2$ in $\mathcal{R}$.
By Lemma~\ref{lemma_polytope_radial_continuous_riemannian}
and following essentially the same proof as above, we then prove that $\mu(P)=\mu(Q)$.
\end{proof}

\subsection{Summary of the proof of Theorem~\ref{theorem_mu_volume_invariant_AdS}}

Now we are ready to show that $\mu(P)$ is invariant under \emph{any} isometry $g$ of $\mathcal{R}_D$
(with both $P$ and $g(P)$ in a finite region in $\mathcal{R}$).
As $\mathcal{R}$ is an \emph{open} half-space in $\mathcal{R}_D$,
by (\ref{equation_half_space_R_D}), $g(\mathcal{R})$ can be written as
\begin{equation}
\label{equation_R_isometry}
\{x\in \mathcal{R}_D: h(x) (ax^2+b\cdot x +c) < 0\},
\end{equation}
where $a$ is allowed to be 0,
and $b\cdot x$ is the bilinear product on $\mathbb{R}^{n,1}$ (\ref{equation_bilinear_product_Minkowski}),
with the $x_0$-coordinate of $b$ being 0, and $h(x)$ is defined in (\ref{equation_sign_function_R_D}).

We now summarize the proof of Theorem~\ref{theorem_mu_volume_invariant_AdS} in the following.

\begingroup
\def\thetheorem{\ref{theorem_mu_volume_invariant_AdS}}
\begin{theorem}
For $n\ge 1$, let $P\in\mathcal{H}_0$ in $\mathbb{DH}^{n+1}_1$ be in a finite region in $\mathcal{R}$,
then $\mu(P)$ exists and is invariant under isometries of $\mathcal{R}_D$
(for isometries $g$ with $g(P)$ also in a finite region in $\mathcal{R}$).
\end{theorem}
\addtocounter{theorem}{-1}
\endgroup

\begin{proof}
By Remark~\ref{remark_union_polytopes_AdS}, 
we only need to consider the case that $P$ is a good polytope in a finite region in $\mathcal{R}$.
By Lemma~\ref{lemma_volume_mu_exist}, $\mu(P)$ exists.
For an isometry $g$ of $\mathcal{R}_D$ (with $g(P)$ also in a finite region in $\mathcal{R}$),
we classify $g$ into the following cases by (\ref{equation_R_isometry}).

(1) $a=0$ and $b=0$. This means that $g(\mathcal{R})=\mathcal{R}$.
So by Lemma~\ref{lemma_invariance_preserve_R}, $\mu(g(P))=\mu(P)$. 

(2) $a\ne 0$. Because $\mathcal{R}$ is not a good half-space of $\mathcal{R}_D$ 
(see Remark~\ref{remark_discriminant}),
the face of $g(\mathcal{R})$ in $\mathcal{R}$ has degenerate metric and is a light cone centered on $x_0=0$
(see Figure~\ref{figure_faces_R_D} (c)).
Shift the apex of the light cone to the origin by a translation $s$,
then $s(g(P))$ is in $\{x\in\mathcal{R}: x^2<0\}$ if $a>0$ or in $\{x\in\mathcal{R}: x^2>0\}$ if $a<0$.
Then by an inversion $j_0$ (it is $j$ if $a<0$, or $j_{-}$ if $a>0$),
we have $j_0(s(g(\mathcal{R})))=\mathcal{R}$.
By Lemma~\ref{lemma_invariance_preserve_R}, 
$j_0(s(g(P)))$ is in a finite region in $\mathcal{R}$ and $\mu(j_0(s(g(P))))=\mu(P)$.
As $g(P)$ is in a finite region in $\mathcal{R}$, so is $s(g(P))$.
Then by Lemma~\ref{lemma_mu_invariant_inversion},
$\mu(g(P))=\mu(s(g(P)))=\mu(j_0(s(g(P))))=\mu(P)$.
So $\mu(g(P))=\mu(P)$.

(3) $a=0$ and $b\ne 0$. 
The face of $g(\mathcal{R})$ in $\mathcal{R}$ is a plane that satisfies $b\cdot x+c=0$.
By a translation $s$ along the $x_n$ direction, $g(P)$ can be shifted into the region $x^2<0$ 
such that $s(g(P))$ dose not touch the surface $x^2=0$,
then by inversion $j_{-}$ we have $j_{-}(s(g(P)))$ in a finite region in $\mathcal{R}$.
Thus by Lemma~\ref{lemma_mu_invariant_inversion}, $\mu(g(P))=\mu(s(g(P)))=\mu(j_{-}(s(g(P))))$.
The translation $s$ can be flexible enough such that
the face of $s(g(\mathcal{R}))$ in $\mathcal{R}$ satisfies $b\cdot x+c_0=0$ with $c_0\ne 0$,
so by Proposition~\ref{proposition_half_space_inversion},
the face of $j_{-}(s(g(\mathcal{R})))$ in $\mathcal{R}$ satisfies $-c_0x^2+b\cdot x=0$ with $c_0\ne 0$.
By case (2) we have $\mu(j_{-}(s(g(P))=\mu(P)$, so $\mu(g(P))=\mu(P)$.

By the cases above, $\mu(P)$ is invariant under isometries of $\mathcal{R}_D$.
\end{proof}

\section{Proof of Theorem~\ref{theorem_volume_AdS_real_imaginary}}

With Theorem~\ref{theorem_mu_volume_invariant_AdS} proved,
then it proves Theorem~\ref{theorem_polytope_volume_finite_invariant}
(see Section~\ref{section_proof_volume_invariant} and Definition~\ref{definition_volume_polytope_AdS}),
namely  $V_{n+1}(P)$ is well defined on $\mathcal{H}_0$ in $\mathbb{DH}^{n+1}_1$
(see Definition~\ref{definition_algebra_half_space}).
We remark that while $\mathcal{R}$ and $\mathcal{R}_{-}$  
(see (\ref{equation_Lorentz_metric}) and  (\ref{equation_Lorentz_metric_negative}))
are half-spaces in $\mathbb{DH}^{n+1}_1$,
they are not \emph{good} half-spaces (see Remark~\ref{remark_discriminant}),
so they are not elements of $\mathcal{H}_0$ and may not have a well defined 
$V_{n+1}(\mathcal{R})$ and $V_{n+1}(\mathcal{R}_{-})$.
We next show that $V_{n+1}(P)$ is only finitely but not countably additive.

\begin{example}
\label{example_countably_additive_AdS}
Let $B_1$ in $\mathbb{DH}^{n+1}_1$ be a good polytope in a finite region in $\mathcal{R}$
containing the origin $O$ and with non-zero volume $V_{n+1}(B_1)$.
By proportionally shrinking $B_1$ to the interior of $B_1$, we obtain $B_2$.
Similarly, we construct $B_{i+1}$ from $B_i$ for all $i\ge 1$,
and let $P_i=B_i\setminus B_{i+1}$. As $V_{n+1}(B_i)$ is invariant under similarity in $\mathcal{R}$,
so $V_{n+1}(P_i)=V_{n+1}(B_{i+1})-V_{n+1}(B_i)=0$,
and thus $\sum_{i=1}^{\infty}V_{n+1}(P_i)=0$.
As $V_{n+1}(\bigcup_{i=1}^{\infty}P_i)=V_{n+1}(B_1\setminus\{O\})\ne 0$, therefore
$V_{n+1}(\bigcup_{i=1}^{\infty}P_i)\ne \sum_{i=1}^{\infty}V_{n+1}(P_i)$.
Thus $V_{n+1}(P)$ is not countably additive.
\end{example}

If $P\in\mathcal{H}_0$ in $\mathbb{DH}^{n+1}_1$ is in a finite region in $\mathcal{R}$,
denote by $P_{+}$ the upper portion of $P$ with $x_0>0$,
and $P_{-}$ the lower portion with $x_0<0$ respectively.
If $P$ has finite \emph{standard} volume (see Definition~\ref{definition_finite_standard_volume}),
because $P_{-}$ is the mirror image of $P_{+}$, 
then $V_{n+1}(P_{+})=V_{n+1}(P_{-})$ for $n$ odd,
 thus $V_{n+1}(P)=2V_{n+1}(P_{+})$;
and $V_{n+1}(P_{+})=-V_{n+1}(P_{-})$ for $n$ even,
thus they cancel each other out and therefore $V_{n+1}(P)=0$.
We further have the following result for $n$ even,
a middle step before proving Theorem~\ref{theorem_volume_AdS_real_imaginary}.

\begin{corollary}
\label{corollary_volume_polytope_zero}
For $n$ even and $n\ge 2$, let $P\in\mathcal{H}_0$ in $\mathbb{DH}^{n+1}_1$.
If the intersection of $P$ and $\partial\mathbb{H}^{n+1}_1$ is less than $n$-dimensional (the full dimension), 
then $V_{n+1}(P)=0$.
\end{corollary}

\begin{proof}
By Remark~\ref{remark_union_polytopes_AdS},
we can just consider the case that $P$ is a good polytope in a finite region in $\mathcal{R}$.
We may also assume that $P$ is the intersection of closed half-spaces, 
which does not affect the assumption in the statement.
As the intersection of $P$ and $\partial\mathbb{H}^{n+1}_1$ is less than full dimensional,
similar to polytopes in hyperbolic space,
because all the facets of a good polytope have non-degenerate metrics,
$P$ only intersects $\partial\mathbb{H}^{n+1}_1$ 
at at most a finite number of points (called \emph{ideal vertices}).
Let $E_t$ be the intersection of $P$ and the plane $x_n=t$ in $\mathcal{R}$.
By Lemma~\ref{lemma_volume_mu_exist} (and replacing $\mu(P)$ with $V_{n+1}(P)$ 
as Theorem~\ref{theorem_polytope_volume_finite_invariant} is already proved),
we have $V_{n+1}(P)=\int b(t)dt$ with
\[b(t)=-\frac{1}{n}\sum_{F\subset E_t}\pm\frac{1}{r_F}V_{n-1}(F),
\]
where $F$ are $(n-1)$-faces of $E_t$ with radius $r_F$,
with the plus sign for top faces and the minus sign for bottom faces $F$ of $E_t$
(see Definition~\ref{definition_face_top_bottom_side}).
Except for a finite number of $t$'s where $P$ has the ideal vertices on $\partial\mathbb{H}^{n+1}_1$,
$E_t$ does not intersect (or ``touch'') with $\partial\mathbb{H}^{n+1}_1$.
Then for the upper portion $F_{+}$ and lower portion $F_{-}$,
both $V_{n-1}(F_{+})$ and $V_{n-1}(F_{-})$ are finite,
and $V_{n-1}(F)=V_{n-1}(F_{+})+V_{n-1}(F_{-})=0$.
Therefore $V_{n+1}(P)=\int b(t)dt=0$.
\end{proof}

More generally, for $P\in\mathcal{H}_0$ in $\mathbb{DH}^{n+1}_1$ in a finite region in $\mathcal{R}$,
no matter $P$ has finite standard volume or not, we have
\begin{equation}
\label{equation_mu_epsilon_P_plus}
\mu_{\epsilon}(P)=\mu_{\epsilon}(P_{+})+\mu_{\epsilon}(P_{-})
=\mu_{\epsilon}(P_{+})+(-1)^{n+1} \mu_{-\epsilon}(P_{+}).
\end{equation}
Taking the \emph{pointwise} sum of $\mu_{\epsilon}(P_{+})+(-1)^{n+1}\mu_{-\epsilon}(P_{+})$ on $P_{+}$
(see (\ref{equation_volume_P_half_space_AdS_epsilon})),
then $\mu_{\epsilon}(P)$ is real for $n$ odd and imaginary for $n$ even.
As by definition $V_{n+1}(P)=\mu(P)=\lim_{\epsilon\to 0^{+}} \mu_{\epsilon}(P)$,
so $V_{n+1}(P)$ is also real for $n$ odd and imaginary for $n$ even.
Besides, $\mu_{\epsilon}(P)=\mu_{-\epsilon}(P)$ for $n$ odd, 
and $\mu_{\epsilon}(P)=-\mu_{-\epsilon}(P)$ for $n$ even,
so the choice of the sign of $\epsilon$ affects 
the definition of $V_{n+1}(P)$ for $n$ even, but not for $n$ odd.

We now prove Theorem~\ref{theorem_volume_AdS_real_imaginary}, which we recall below.

\begingroup
\def\thetheorem{\ref{theorem_volume_AdS_real_imaginary}}
\begin{theorem}
Let $P\in\mathcal{H}_0$ in $\mathbb{DH}^{n+1}_1$,
then $V_{n+1}(P)$ is real for $n$ odd,
and $V_{n+1}(P)$ is imaginary for $n$ even and is completely determined by 
the intersection of $P$ and $\partial\mathbb{H}^{n+1}_1$.
\end{theorem}
\addtocounter{theorem}{-1}
\endgroup

\begin{proof}
By the argument above, the only thing left is to show that for $n$ even,
$V_{n+1}(P)$ is completely determined by the intersection of $P$ and $\partial\mathbb{H}^{n+1}_1$.
Assume $P'\in\mathcal{H}_0$ and $P\cap \partial\mathbb{H}^{n+1}_1=P'\cap \partial\mathbb{H}^{n+1}_1$,
then the intersection of $P\setminus P'$ and $\partial\mathbb{H}^{n+1}_1$ is an empty set.
By Corollary~\ref{corollary_volume_polytope_zero},
we have $V_{n+1}(P\setminus P')=0$. Therefore
\[V_{n+1}(P)=V_{n+1}(P\setminus P') + V_{n+1}(P\cap P')
=V_{n+1}(P\cap P').
\]
By symmetry we also have 
$V_{n+1}(P')=V_{n+1}(P\cap P')$, so $V_{n+1}(P)=V_{n+1}(P')$.
\end{proof}

\section{A Schl\"{a}fli differential formula for $\mathbb{DH}^{n+1}_1$}

We next obtain a Schl\"{a}fli differential formula for $\mathbb{DH}^{n+1}_1$,
which will be helpful for introducing the corresponding theories on $\partial\mathbb{H}^{n+1}_1$
(Section~\ref{section_volume_boundary}), but it is also of interest in its own right.
See Milnor~\cite{Milnor:Schlafli} for the background of the formula,
and see also Rivin and Schlenker~\cite{RivinSchlenker}, Su{\'a}rez-Peir{\'o}~\cite{Suarez:deSitter}
and Zhang~\cite{Zhang:rigidity} for some generalizations.
A Schl\"{a}fli differential formula for $\mathbb{DH}^n$ was also obtained
\cite[Theorem~1.2]{Zhang:double_hyperbolic}.
The formula relates the change of the volume of a polytope to the change of its dihedral angles
(at the codimension 2 faces).
But we note that, unlike in Riemannian geometry where an angle is in general uniquely defined,
in Minkowski space the definition of an angle is not unique and may depend on the context it is addressing.
For different treatments of the notion of angles in Minkowski space, e.g., see Alexandrov~\cite{Alexandrov:Minkowski},
Cho and Kim~\cite{ChoKim}, Schlenker~\cite{Schlenker:cross} and Su{\'a}rez-Peir{\'o}~\cite{Suarez:deSitter}.
Our definition agrees with \cite{Alexandrov:Minkowski}, but uses a slightly different approach.

For the purpose of this paper, we are only interested in defining a dihedral angle between two facets
(with non-degenerate metrics) whose intersection is a \emph{Riemannian} codimension 2 face.
Then the angle can be defined in a Minkowski 2-plane between two \emph{non}-null vectors.
To do so, we will use the $x_0x_n$-plane as example,
and define an angle $\theta_A$ for a triangle $ABC$ whose sides are not null vectors.
The \emph{length} $c$ of edge $AB$ is defined by $(\overrightarrow{AB}^2)^{1/2}$, 
which is a positive real number if $AB$ is in a spacelike direction,
and is the product of a positive real number by the imaginary unit $i$ if $AB$ is in a timelike direction;
and the same for length $b$ of $AC$ and length $a$ of $BC$.
As the sides are not null vectors, we have $abc\ne 0$.
The \emph{area} of triangle $ABC$ is defined by integrating the area element $dx_0dx_n$ over the region,
which is always a positive real number.

The \emph{angle} $\theta_A$ at $A$ is defined such that
\begin{equation}
\label{equation_triangle_angle_area}
bc \sinh\theta_A=2\cdot\text{area}(ABC),
\end{equation}
where the coefficient 2 is because the \emph{parallelogram} with sides $AB$ and $AC$
has two times the area of the triangle $ABC$.
In the $x_0x_n$-plane, the light cone centered at $A$ cuts the plane into four parts 
(see the dashed lines in Figure~\ref{figure_angle_Minkowski} (a)).
If $AB$ and $AC$ are in the same part and in spacelike directions (e.g., in the $x_0$-direction), 
by (\ref{equation_triangle_angle_area}), as $bc$ is positive so $\sinh\theta_A$ is positive 
and then $\theta_A$ is defined as a positive real number.
Similarly, if $AB$ and $AC$ are in the same part and in timelike directions
(e.g., in the $x_n$-direction), by (\ref{equation_triangle_angle_area}), as $bc$ is negative so $\sinh\theta_A$ is negative
and then $\theta_A$ is defined as a negative real number.
If $AB$ and $AC$ are in a spacelike and a timelike direction respectively,
and $2\cdot\text{area}(ABC) = |bc|$, then we call $\theta_A$ a \emph{right angle} in the Minkowski 2-plane.
As $b$ and $c$ contain one real number and one imaginary number,
so $i\sinh\theta_A=1$ and $\sinh\theta_A=-i$, then we define a right angle to be
\begin{equation}
\label{equation_right_angle}
\theta_A=-\frac{\pi}{2}i. 
\end{equation}
We remark that a right angle in the $x_0x_n$-plane can be completely inside another right angle.
Those information is enough to determine the angle $\theta_A$ for all other cases.

\begin{figure}[h]
\centering
\resizebox{.5\textwidth}{!}
  {\input{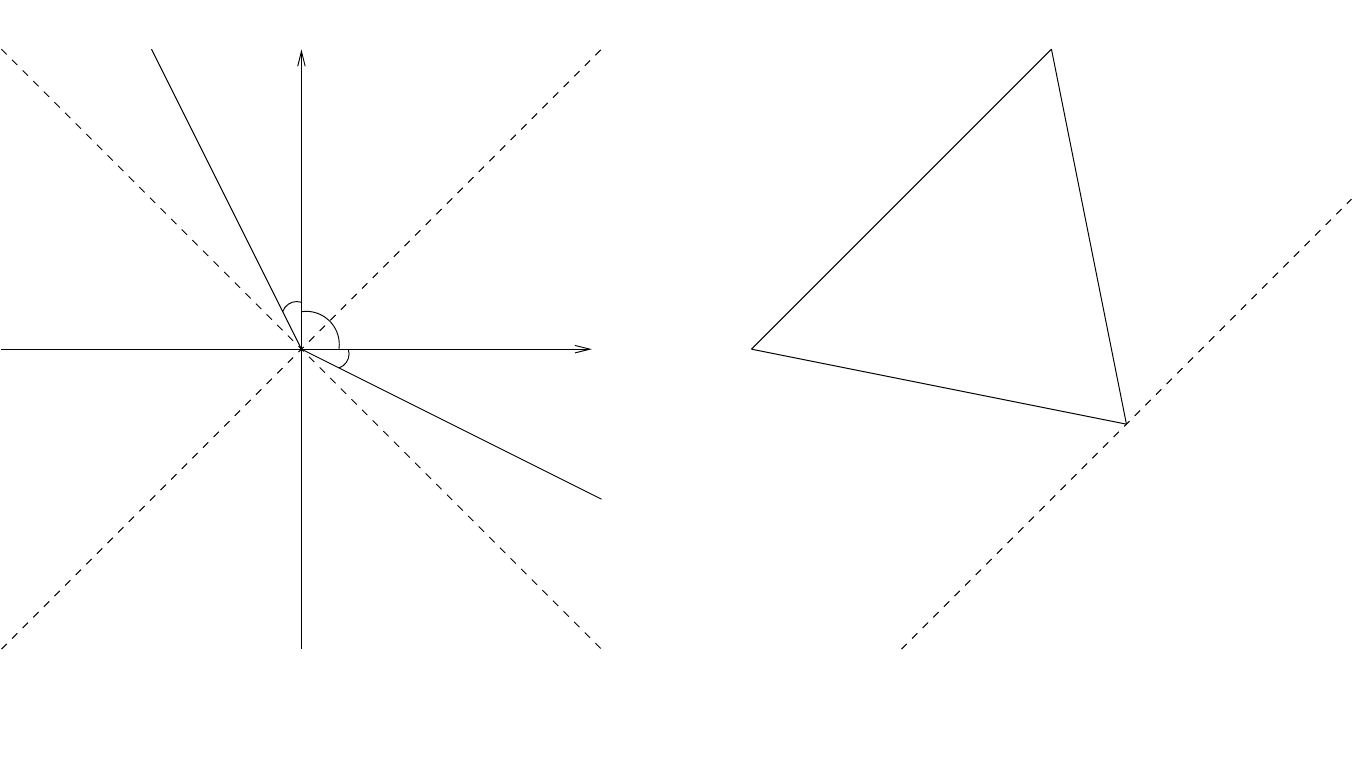_t}}
\caption{The angle $\theta_A$ at $A$}
\label{figure_angle_Minkowski}
\end{figure}

A \emph{straight angle} $\theta$ in a Minkowski 2-plane is the angle between any two non-null vectors pointing to opposite directions.
It is the sum of two right angles, so a straight angle is
\begin{equation}
\label{equation_straight_angle}
\theta=-\pi i. 
\end{equation}
By an elementary geometry property, 
the interior angles of the triangle add up to a straight angle (see Figure~\ref{figure_angle_Minkowski} (b)),
therefore 
\begin{equation}
\label{equation_triangle_angle_sum}
\theta_A+\theta_B+\theta_C=-\pi i.
\end{equation}
On both sides of (\ref{equation_triangle_angle_area}) divide by $abc$,
as the right side is symmetric to all edges, so 
\begin{equation}
\label{equation_angle_length_ratio}
\frac{\sinh\theta_A}{a}=\frac{\sinh\theta_B}{b}=\frac{\sinh\theta_C}{c}.
\end{equation}

We have the following special case of Schl\"{a}fli differential formula for $\mathbb{DH}^{n+1}_1$,
and though not a ``full version'', it is strong enough for our purpose.

\begin{lemma}
\label{lemma_mu_SDF_special_AdS}
For $n\ge 1$, let $P$ in $\mathbb{DH}^{n+1}_1$
be a good polytope in a finite region in $\mathcal{R}$,
and $P_t$ be the intersection of $P$ and $x_n\le t$, with $E_t$ the $n$-face of $P_t$ on $x_n=t$.
For each $(n-1)$-face $F$ of $E_t$, let $\theta_F$ be the dihedral angle at $F$
and $r_F$ be the radius of $F$. 
Then $V_{n+1}(P_t)$ is continuous for $t$, and for $\kappa=-1$,
\[\kappa\cdot \frac{dV_{n+1}(P_t)}{dt}
=\frac{1}{n}\sum_{F\subset E_t}V_{n-1}(F)\frac{d\theta_F}{dt}.
\]
\end{lemma}

\begin{proof}
By Lemma~\ref{lemma_volume_mu_exist} (and replacing $\mu(P_t)$ with $V_{n+1}(P_t)$ 
as Theorem~\ref{theorem_polytope_volume_finite_invariant} is already proved),
$V_{n+1}(P_t)$ is continuous for $t$ and 
\begin{equation}
\label{equation_volume_SDF_special_AdS_radius}
\frac{dV_{n+1}(P_t)}{dt}=-\frac{1}{n}\sum_{F\subset E_t}\pm\frac{1}{r_F}V_{n-1}(F),
\end{equation}
with the plus sign for top faces and the minus sign for bottom faces $F$ of $E_t$
(see Definition~\ref{definition_face_top_bottom_side}).
The side faces $F$ of $E_t$, whose radius $r_F$ can be treated as $\infty$, has $\frac{1}{r_F}=0$.
Now all we need to do is to show that $\frac{d\theta_F}{dt}=\pm \frac{1}{r_F}$.

We first consider the case that $F$ is the intersection of $E_t$ and a top $n$-face of $P_t$.
If the top $n$-face is Lorentzian, then it is on a surface $(x-v)^2=r^2$ in $\mathcal{R}$ with $r>0$,
where $v$ is a vector in the plane $x_0=0$ with $c$ the $x_n$-coordinate of $v$.
For the triangle in Figure~\ref{figure_angle_triangle} (a), 
by using (\ref{equation_right_angle}) and (\ref{equation_triangle_angle_sum}),
the lengths are $r$, $r_F$, $(t-c)i$, and the angles are $-\frac{\pi}{2}i$, $-\theta_F-\pi i$, $\theta_F+\frac{\pi}{2}i$ respectively.
Then by (\ref{equation_angle_length_ratio}), we have
$\frac{\sinh\left(-\frac{\pi}{2}i\right)}{r}=\frac{\sinh(-\theta_F-\pi i)}{r_F}=\frac{\sinh\left(\theta_F+\frac{\pi}{2}i\right)}{(t-c)i}$.
So
\[r\cdot\sinh(-\theta_F-\pi i)=r_F\cdot\sinh\left(-\frac{\pi}{2}i\right),
\quad
r\cdot\sinh\left(\theta_F+\frac{\pi}{2}i\right)=(t-c)i\cdot\sinh\left(-\frac{\pi}{2}i\right).
\]
Differentiating the right hand equation with respect to $t$, we have $ri\cdot\sinh\theta_F\frac{d\theta_F}{dt}=1$.
By the left hand equation we have $r\cdot\sinh\theta_F=-r_Fi$, then $\frac{d\theta_F}{dt}=\frac{1}{r_F}$.

If the top $n$-face of $P_t$ is Riemannian, then it is on a surface $(x-v)^2=-r^2$ in $\mathcal{R}$ with $r>0$,
where $v$ is a vector in the plane $x_0=0$ with $c$ the $x_n$-coordinate of $v$.
For the triangle in Figure~\ref{figure_angle_triangle} (b),
by using (\ref{equation_right_angle}) and (\ref{equation_triangle_angle_sum}) again,
the lengths are $ri$, $r_F$, $(t-c)i$, and the angles are $-\frac{\pi}{2}i$, $-\theta_F$, $\theta_F-\frac{\pi}{2}i$ respectively.
Then by (\ref{equation_angle_length_ratio}) again, we have
$\frac{\sinh\left(-\frac{\pi}{2}i\right)}{ri}=\frac{\sinh(-\theta_F)}{r_F}=\frac{\sinh\left(\theta_F-\frac{\pi}{2}i\right)}{(t-c)i}$.
So
\[ri\cdot\sinh(-\theta_F)=r_F\cdot\sinh\left(-\frac{\pi}{2}i\right),
\quad
ri\cdot\sinh\left(\theta_F-\frac{\pi}{2}i\right)=(t-c)i\cdot\sinh\left(-\frac{\pi}{2}i\right).
\]
Differentiating the right hand equation with respect to $t$, we have $r\cdot\sinh\theta_F\frac{d\theta_F}{dt}=1$.
By the left hand equation we have $-ri\cdot\sinh\theta_F=-r_Fi$, then $\frac{d\theta_F}{dt}=\frac{1}{r_F}$.

Next, if $F$ is the intersection of $E_t$ and a bottom $n$-face of $P_t$, 
similarly $\frac{d\theta_F}{dt}=-\frac{1}{r_F}$. 
If $F$ is the intersection of $E_t$ and a side $n$-face of $P_t$, as $\theta_F$ does not change,
so $\frac{d\theta_F}{dt}=0$.

In (\ref{equation_volume_SDF_special_AdS_radius}), 
replacing $\pm \frac{1}{r_F}$ with $\frac{d\theta_F}{dt}$,
and multiplying $\kappa=-1$ on both sides, we then finish the proof.
\end{proof}

\begin{figure}[h]
\centering
\resizebox{.6\textwidth}{!}
  {\input{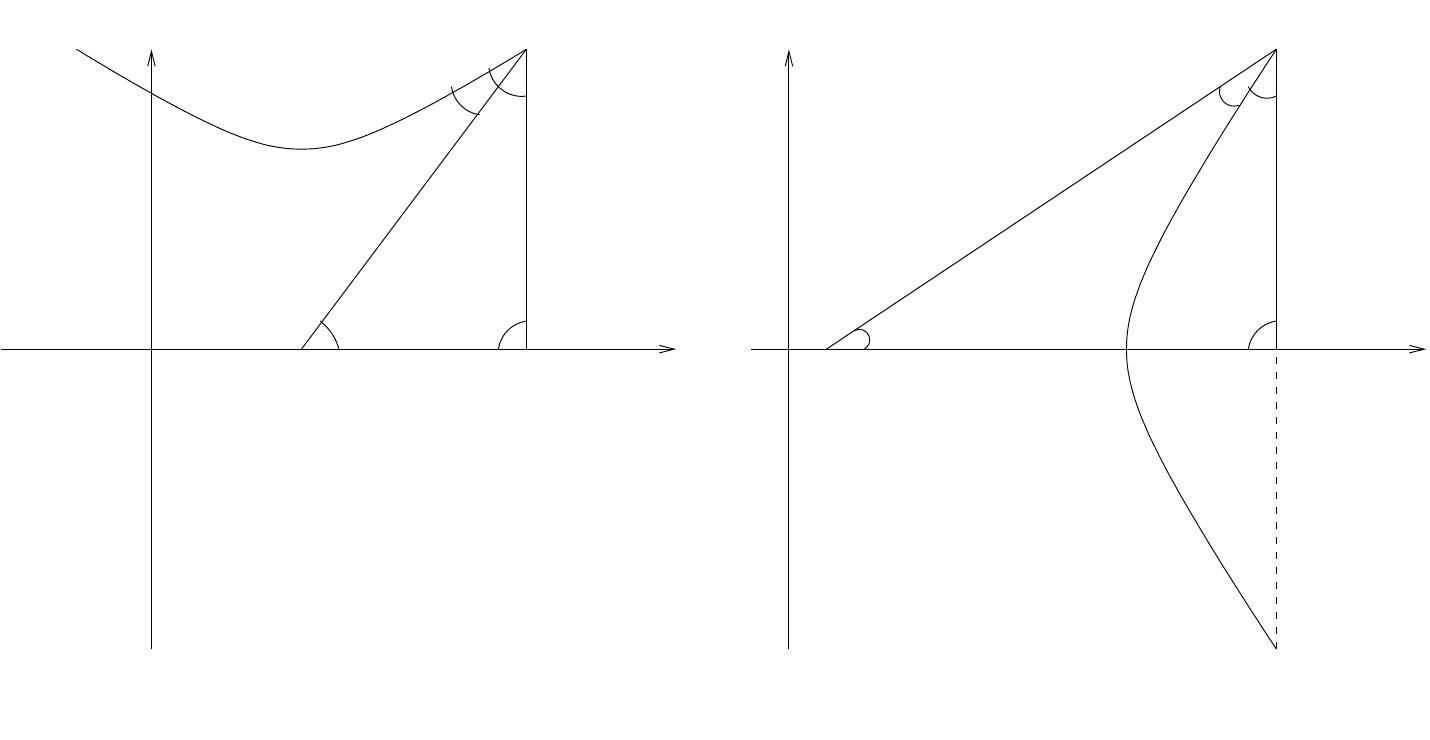_t}}
\caption{The dihedral angle $\theta_F$ at the intersection of $x_n=t$ and a top face}
\label{figure_angle_triangle}
\end{figure}

\section{A volume on $\partial\mathbb{H}^{n+1}_1$ for $n$ even}
\label{section_volume_boundary}

An important application of Theorem~\ref{theorem_volume_AdS_real_imaginary}
is that for $n$ even and $n\ge 2$,
the volume on $\mathbb{DH}^{n+1}_1$ induces an intrinsic \emph{volume} on $\partial\mathbb{H}^{n+1}_1$
that is invariant under conformal transformations of $\partial\mathbb{H}^{n+1}_1$,
namely, invariant under isometries of $\mathbb{H}^{n+1}_1$.
For a similar notion of volume on $\partial\mathbb{H}^{n+1}$ for $n$ even, see Section~\ref{section_background}.
We first clarify some notions for all $n\ge 1$.
Recall that a \emph{good half-space} in $\mathbb{DH}^{n+1}_1$ is a half-space
whose face has non-degenerate metric (see Definition~\ref{definition_algebra_half_space}).

\begin{definition}
For a half-space (resp. good half-space) in $\mathbb{DH}^{n+1}_1$,
we call its restriction to $\partial\mathbb{H}^{n+1}_1$ a \emph{half-space} 
(resp. \emph{good half-space}) in $\partial\mathbb{H}^{n+1}_1$.
A \emph{polytope} (resp. \emph{good polytope}) in $\partial\mathbb{H}^{n+1}_1$ is a finite intersection of 
half-spaces (resp. good half-spaces) in $\partial\mathbb{H}^{n+1}_1$.
Let $\mathcal{F}$ (resp. $\mathcal{F}_0$) be the algebra over $\partial\mathbb{H}^{n+1}_1$
generated by half-spaces (resp. good half-spaces) in $\partial\mathbb{H}^{n+1}_1$.
\end{definition}

By definition, a polytope (resp. good polytope) in $\partial\mathbb{H}^{n+1}_1$
can also be viewed as a restriction of a polytope (resp. good polytope) $P$ 
in $\mathbb{DH}^{n+1}_1$ to $\partial\mathbb{H}^{n+1}_1$.
We remark that the choice of $P$ in $\mathbb{DH}^{n+1}_1$ may not be unique,
but this is not a concern for our results.

For $n$ even and $n\ge 2$, for any $G\in\mathcal{F}_0$ in $\partial\mathbb{H}^{n+1}_1$,
by definition there is $P\in\mathcal{H}_0$ in $\mathbb{DH}^{n+1}_1$ 
(see Definition~\ref{definition_algebra_half_space}),
such that $G=P\cap \partial\mathbb{H}^{n+1}_1$.
On $\partial\mathbb{H}^{n+1}_1$ we define a real valued \emph{volume} 
$V_{\infty,n}(G)$ of $G$ by
\begin{equation}
\label{equation_volume_boundary_AdS}
V_{\infty,n}(G) := c_n\cdot V_{n+1}(P),
\quad\text{where}\quad
c_n=\frac{V_n(\mathbb{S}^n)}{i^{n+1}V_{n+1}(\mathbb{S}^{n+1})},
\end{equation}
with $V_n(\mathbb{S}^n)$ the volume of the standard unit $n$-sphere $\mathbb{S}^n$.
By Theorem~\ref{theorem_volume_AdS_real_imaginary}, $V_{\infty,n}(G)$ is well defined for $n$ even.
As the conformal transformations of $\partial\mathbb{H}^{n+1}_1$
are induced by the isometries of $\mathbb{H}^{n+1}_1$,
so combined with Theorem~\ref{theorem_polytope_volume_finite_invariant}, 
it immediately proves Theorem~\ref{theorem_volume_boundary_at_infinity}.

\begingroup
\def\thetheorem{\ref{theorem_volume_boundary_at_infinity}}
\begin{theorem}
For $n$ even and $n\ge 2$, let $G\in\mathcal{F}_0$ in $\partial\mathbb{H}^{n+1}_1$,
then $V_{\infty,n}(G)$ is well defined and invariant under conformal transformations
of $\partial\mathbb{H}^{n+1}_1$.
\end{theorem}
\addtocounter{theorem}{-1}
\endgroup

\begin{remark}
For $n$ even and $n\ge 2$, let $P$ be a good polytope in a finite region in $\mathcal{R}$ 
(see (\ref{equation_Lorentz_metric})) containing the origin and with non-zero volume $V_{n+1}(P)$.
Let $G=P\cap \partial\mathbb{H}^{n+1}_1$,
then $V_{\infty,n}(G)$ is non-zero and is invariant under similarity.
So $G$ can be proportionally shrunk to arbitrarily \emph{small} size 
while keeping $V_{\infty,n}(G)$ a fixed non-zero volume.
This implies that $V_{\infty,n}(G)$
is not countably additive on $\mathcal{F}_0$ (see Example~\ref{example_countably_additive_AdS}),
and is not induced by any volume form on $\partial\mathbb{H}^{n+1}_1$ as a differentiable manifold.
Besides, $V_{\infty,n}(G)$ also takes values positive, negative and zero as well.
\end{remark}

\begin{remark}
For $n$ even, if $P$ is a polytope in the double hyperbolic space $\mathbb{DH}^{n+1}$ and
$G=P\cap \partial\mathbb{H}^{n+1}$, then on $\partial\mathbb{H}^{n+1}$,
in \cite[Theorem~12.1]{Zhang:double_hyperbolic} $V_{\infty,n}(G)$ was defined the same way as in 
(\ref{equation_volume_boundary_AdS}) with the same factors $c_n$,
and is well defined and invariant under M\"{o}bius transformations of $\partial\mathbb{H}^{n+1}$.
The factors $c_n$ for $\partial\mathbb{H}^{n+1}$ were chosen in a way such that
$V_{\infty,n}(\partial\mathbb{H}^{n+1}) = V_n(\mathbb{S}^n)$, 
but we remark that the choices are not unique, e.g.,
where they may also be chosen such that $V_{\infty,n}(\partial\mathbb{H}^{n+1}) = V_n(\mathbb{DH}^n)$ instead.
\end{remark}

To our knowledge, for $n$ even, both the definition of $V_{\infty,n}(G)$ on $G$ in $\partial\mathbb{H}^{n+1}_1$ 
and its conformal invariance property are new. We ask the following question.

\begin{question}
For $n$ even and $n\ge 2$, can the volume $V_{\infty,n}(G)$ on $\partial\mathbb{H}^{n+1}_1$ 
be defined for a larger class of regions than the algebra $\mathcal{F}_0$ of $\partial\mathbb{H}^{n+1}_1$?
\end{question}

\begin{remark}
\label{remark_volume_infinity_2}
In $\partial\mathbb{H}^3_1$,
for any region $U$ with piecewise smooth boundary that is nowhere ``tangent'' to a null line,
a \emph{potential} definition of $V_{\infty,2}(U)$ is as follows.
First if $U$ is homeomorphic to a closed disk, 
assume the dihedral angles between consecutive sides are $\theta_i$,
then define $V_{\infty,2}(U):=\operatorname{Re} \sum_{i}\theta_i$.
It can be shown that the volume is well defined and additive.
Simply from its definition, $V_{\infty,2}(U)$ 
is invariant under conformal transformations of $\partial\mathbb{H}^3_1$.
\end{remark}

For $n=2m$, we now obtain a special case of Schl\"{a}fli differential formula for $\partial\mathbb{H}^{2m+1}_1$,
which is also a middle step to prove a more import result 
Theorem~\ref{theorem_polytope_volume_infinity_AdS} later.

\begin{lemma}
\label{lemma_SDF_special_boundary_AdS}
For $m\ge 1$, let $G$ in $\partial\mathbb{H}^{2m+1}_1$ 
be a good polytope in a finite region in $x_0=0$ in $\mathcal{R}$,
and $G_t$ be the intersection of $G$ and $x_{2m}\le t$. 
Then $V_{\infty,2m}(G_t)$ is continuous for $t$, and
\begin{equation*}
\frac{dV_{\infty,2m}(G_t)}{dt}
=\frac{1}{2m-1}\sum_{H} V_{\infty,2m-2}(H)\frac{d\theta_H}{dt},
\end{equation*}
where the sum is taken over all $(2m-2)$-faces $H$ of $G_t$ on $x_{2m}=t$,
with $\theta_H$ the dihedral angle at $H$.
For $2m-2=0$, $V_{\infty,0}(H)$ is the number of points in $H$.
\end{lemma}

\begin{proof}
By definition there is a good polytope $P$ in $\mathbb{DH}^{2m+1}_1$ 
such that $G=P\cap \partial\mathbb{H}^{2m+1}_1$.
Without loss of generality, we assume $P$ is in a finite region in $\mathcal{R}$ 
(e.g., by taking intersections with other half-spaces of $\mathbb{DH}^{2m+1}_1$ if needed).
Let $P_t$ be the intersection of $P$ and $x_{2m}\le t$ in $\mathcal{R}$,
with $E_t$ the $(2m)$-face of $P_t$ on $x_{2m}=t$.
By Lemma~\ref{lemma_mu_SDF_special_AdS},
$V_{2m+1}(P_t)$ is continuous for $t$, and for $\kappa=-1$,
\begin{equation}
\label{equation_SDF_special_AdS}
\kappa\cdot \frac{dV_{2m+1}(P_t)}{dt}
=\frac{1}{2m}\sum_{F\subset E_t} V_{2m-1}(F)\frac{d\theta_F}{dt},
\end{equation}
where the sum is taken over all $(2m-1)$-faces $F$ of $E_t$ on $x_{2m}=t$,
with $\theta_F$ the dihedral angle at $F$.
For each $F\subset E_t$, if it does not intersect (or ``touch'') with $x_0=0$, 
then for the upper portion $F_{+}$ and lower portion $F_{-}$,
both $V_{2m-1}(F_{+})$ and $V_{2m-1}(F_{-})$ are finite,
so $V_{2m-1}(F)=V_{2m-1}(F_{+})+V_{2m-1}(F_{-})=0$
and does not contribute to (\ref{equation_SDF_special_AdS}).
Otherwise let $H$ be the intersection of $F$ and $x_0=0$. As
\[V_{\infty,2m}(G_t)=c_{2m}\cdot V_{2m+1}(P_t),
\quad V_{\infty,2m-2}(H)=c_{2m-2}\cdot V_{2m-1}(F),
\quad\theta_H=\theta_F,
\]
by plugging them into (\ref{equation_SDF_special_AdS}) 
(the only non-trivial computation is to use a well known recursive formula 
$V_n(\mathbb{S}^n)=\frac{2\pi}{n-1} V_{n-2}(\mathbb{S}^{n-2})$),
we then finish the proof.
\end{proof}

We have the following result for $\partial\mathbb{H}^3_1$.
See also Remark~\ref{remark_volume_infinity_2}.

\begin{corollary}
\label{corollary_volume_infinity_2}
Let $G$ in $\partial\mathbb{H}^3_1$ be a good polytope in a finite region in $x_0=0$ in $\mathcal{R}$.
If $G$ is homeomorphic to a closed disk and has $k$ sides with angles $\theta_i$ between them, then
\begin{equation*}
V_{\infty,2}(G)=\sum_{i}\theta_i+(k-2)\pi i=\operatorname{Re} \sum_{i}\theta_i.
\end{equation*}
\end{corollary}

\begin{proof}
By Lemma~\ref{lemma_SDF_special_boundary_AdS}, with the details skipped, 
we can verify that $V_{\infty,2}(G)$ has the form $\sum_{i}\theta_i+c$,
and by the fact that any straight angle in the plane $x_0=0$ 
is $-\pi i$ (\ref{equation_straight_angle}), we have $c=(k-2)\pi i$.
As $V_{\infty,2}(G)$ is real, so $V_{\infty,2}(G)=\operatorname{Re} \sum_{i}\theta_i$.
\end{proof}

\begin{example}
Let $G$ in $\partial\mathbb{H}^3_1$ be a good polytope with flat edges in a finite region in $x_0=0$ in $\mathcal{R}$
(like a polygon in a Minkowski 2-plane).
If $G$ is a triangle, then by Corollary~\ref{corollary_volume_infinity_2} we have
$V_{\infty,2}(G)=\sum_{i}\theta_i+\pi i=0$ (see (\ref{equation_triangle_angle_sum})).
Otherwise, we can cut $G$ into triangles so that we still have $V_{\infty,2}(G)=0$.
\end{example}

We have the following generalization for higher dimensional good polytope in $\partial\mathbb{H}^{2m+1}_1$
with \emph{flat} facets in $x_0=0$ in $\mathcal{R}$ (like a convex polytope in a Minkowski $(2m)$-space).

\begin{proposition}
\label{proposition_Minkowski_polytope}
For $m\ge 1$, let $G$ in $\partial\mathbb{H}^{2m+1}_1$ be a good polytope with flat facets 
in a finite region in $x_0=0$ in $\mathcal{R}$, then $V_{\infty,2m}(G)=0$. 
\end{proposition}

\begin{proof}
Let $G_t$ be the intersection of $G$ and $x_{2m}\le t$ in $x_0=0$ in $\mathcal{R}$.
Denote $H$ a $(2m-2)$-face of $G_t$ on $x_{2m}=t$, and $\theta_H$ the dihedral angle at $H$.
Because $G$ has flat facets, so all $\theta_H$ are constants, thus $\frac{d\theta_H}{dt}=0$.
Then by Lemma~\ref{lemma_SDF_special_boundary_AdS}, $V_{\infty,2m}(G_t)$ is continuous for $t$, and
$\frac{dV_{\infty,2m}(G_t)}{dt}=0$.
Since $V_{\infty,2m}(G_t)$ is 0 when $t\to -\infty$, so $V_{\infty,2m}(G)=0$. 
\end{proof}

We remark that if $G$ has all flat facets in $x_0=0$ in $\mathcal{R}_D$ (see Definition~\ref{definition_model_R_D})
but is \emph{not} in a finite region in $x_0=0$ in $\mathcal{R}$
(like an \emph{unbounded} polytope in Minkowski space,
and some part of $G$ may fall in $x_0=0$ in $\mathcal{R}_{-}$), then we do not have $V_{\infty,2m}(G)=0$. 
Proposition~\ref{proposition_Minkowski_polytope} is analogous to (\ref{equation_polytope_volume_infinity}) 
for polytopes in the Euclidean $(2m)$-space with $\kappa=0$.

In fact, the analogy to (\ref{equation_polytope_volume_infinity}) goes further for $\kappa=-1$ as well.
Notice that $\mathbb{DH}^{2m}_1$ (with $\kappa=-1$) is naturally endowed with 
the same conformal structure as $\partial\mathbb{H}^{2m+1}_1$,
which is analogous to the fact that $\mathbb{DH}^{2m}$ (with $\kappa=-1$) is naturally endowed with 
the same conformal structure as $\partial\mathbb{H}^{2m+1}$. 
With a slight abuse of notation, the algebra $\mathcal{H}_0$ of $\mathbb{DH}^{2m}_1$ is a \emph{subalgebra}
of the algebra $\mathcal{F}_0$ of $\partial\mathbb{H}^{2m+1}_1$ 
when $\mathbb{DH}^{2m}_1$ is treated conformally as $\partial\mathbb{H}^{2m+1}_1$.
So for $P\in \mathcal{H}_0$ in $\mathbb{DH}^{2m}_1$, besides its volume $V_{2m}(P)$,
by the algebra $\mathcal{F}_0$ of $\partial\mathbb{H}^{2m+1}_1$,
we can also assign $P$ a ``conformal volume'' $V_{\infty,2m}(P)$.
We have the following important property for $\mathbb{DH}^{2m}_1$.

\begin{theorem}
\label{theorem_polytope_volume_infinity_AdS}
Let $P$ be a good polytope in $\mathbb{DH}^{2m}_1$ with $\kappa=-1$,
then $V_{\infty,2m}(P)=\kappa^m V_{2m}(P)$.
\end{theorem}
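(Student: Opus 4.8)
The plan is to prove the identity by slicing along the timelike direction, differentiating both sides in the slice parameter, and matching the two resulting Schl\"afli-type formulas, with the already-established hyperbolic correspondence (\ref{equation_polytope_volume_infinity_hyperbolic}) supplying the comparison in dimension $2m-2$. First I would reduce to the case that $P$ is a single good polytope lying in a finite region in $\mathcal{R}$: since $V_{2m}$ is finitely additive and invariant under isometry (Theorem~\ref{theorem_polytope_volume_finite_invariant}) and $V_{\infty,2m}$ is finitely additive and invariant under conformal transformations (Theorem~\ref{theorem_volume_boundary_at_infinity}), one cuts $P$ into finitely many good polytopes each isometric to a polytope in a finite region, exactly as in Remark~\ref{remark_union_polytopes_AdS}. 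Using the conformal identification of $\mathbb{DH}^{2m}_1$ with $\partial\mathbb{H}^{2m+1}_1$, write $G$ for $P$ regarded as a region in $\partial\mathbb{H}^{2m+1}_1$ (a good polytope, hence of the form $\widetilde P\cap\partial\mathbb{H}^{2m+1}_1$ for some good polytope $\widetilde P$ in $\mathbb{DH}^{2m+1}_1$), so that $V_{\infty,2m}(P)=V_{\infty,2m}(G)$.

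Next I would slice along the single timelike coordinate. In the model $\mathcal{R}$ of $\mathbb{DH}^{2m}_1$ set $P_t=P\cap\{x_{2m-1}\le t\}$ with Riemannian $(2m-1)$-face $E_t$ on $x_{2m-1}=t$; under the conformal identification this matches the slicing $G_t=G\cap\{x_{2m}\le t\}$ used in Lemma~\ref{lemma_SDF_special_boundary_AdS}. The two formulas then read, for $\kappa=-1$,
\begin{equation}
\kappa\cdot\frac{dV_{2m}(P_t)}{dt}=\frac{1}{2m-1}\sum_{F\subset E_t}V_{2m-2}(F)\,\frac{d\theta_F}{dt}
\end{equation}
from Lemma~\ref{lemma_mu_SDF_special_AdS} (with $n=2m-1$), and
\begin{equation}
\frac{dV_{\infty,2m}(G_t)}{dt}=\frac{1}{2m-1}\sum_{H}V_{\infty,2m-2}(H)\,\frac{d\theta_H}{dt}
\end{equation}
from Lemma~\ref{lemma_SDF_special_boundary_AdS}. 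Because the conformal identification preserves faces and, being a conformal notion, preserves dihedral angles, the codimension-two faces $H$ of $G_t$ correspond bijectively to the $(2m-2)$-faces $F$ of the hyperbolic slice $E_t$, with $\theta_H=\theta_F$.

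The crux is the comparison $V_{\infty,2m-2}(H)=(-1)^{m-1}V_{2m-2}(F)$. Since $E_t$ is a Riemannian, hence hyperbolic, slice, each $F$ is an honest polytope in a lower-dimensional $\mathbb{DH}^{2m-2}$ and $V_{2m-2}(F)$ is its double hyperbolic volume (the same volume appearing in Lemma~\ref{lemma_mu_SDF_special_AdS}). Both $H$ and $F$ carry the same round-sphere conformal structure under the identification, and the normalizing constants $c_{2m-2}$ were chosen consistently for $\mathbb{DH}^{2m-1}$ and $\mathbb{DH}^{2m-1}_1$ (the remark following (\ref{equation_volume_boundary_AdS})); hence $V_{\infty,2m-2}(H)$ equals the conformal volume assigned to $F$ as a $\mathbb{DH}^{2m-2}$ polytope, and applying the hyperbolic correspondence (\ref{equation_polytope_volume_infinity_hyperbolic}) in dimension $2m-2$ gives $V_{\infty,2m-2}(H)=(-1)^{m-1}V_{2m-2}(F)$ (for $m=1$ this is the trivial identity $V_{\infty,0}=V_0=$ point count). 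Substituting this into the second formula and using the first with $\kappa=-1$ yields $\frac{d}{dt}V_{\infty,2m}(G_t)=(-1)^{m}\frac{d}{dt}V_{2m}(P_t)$ for almost every $t$; since both functions are continuous in $t$ (Lemma~\ref{lemma_volume_mu_exist}(2) and Lemma~\ref{lemma_SDF_special_boundary_AdS}) and vanish at the bottom slice, integrating over $t$ gives $V_{\infty,2m}(P)=(-1)^m V_{2m}(P)$.

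I expect the main obstacle to be precisely this comparison step: justifying that the conformal identification carries the round-sphere conformal structure of the hyperbolic slice faces $F$ onto that of the boundary faces $H$, and that the two a priori distinct definitions of $V_{\infty,2m-2}$ (one filling in with a hyperbolic $\mathbb{DH}^{2m-1}$, the other with the anti-de Sitter $\mathbb{DH}^{2m-1}_1$) produce the same value on these $(2m-2)$-dimensional regions --- this is exactly the consistency of the constants $c_{2m-2}$ that the text flags as necessary. A secondary point is to control the finite set of exceptional $t$ where $E_t$ changes combinatorial type or acquires ideal vertices, so that the integration of the pointwise derivative identity is legitimate; this is already handled by the integrability established in Lemma~\ref{lemma_volume_mu_exist}.
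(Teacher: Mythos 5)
Your proposal is correct and follows essentially the same route as the paper's proof: reduce to a good polytope in a finite region of $\mathcal{R}$, slice along the timelike coordinate, apply Lemma~\ref{lemma_mu_SDF_special_AdS} with $n=2m-1$ and Lemma~\ref{lemma_SDF_special_boundary_AdS} to get the two Schl\"afli-type derivative formulas, substitute the hyperbolic correspondence $V_{\infty,2m-2}(F)=(-1)^{m-1}V_{2m-2}(F)$ from (\ref{equation_polytope_volume_infinity_hyperbolic}) for the faces of the Riemannian slice, and integrate using continuity of both $V_{2m}(P_t)$ and $V_{\infty,2m}(P_t)$. The comparison step you flag as the main obstacle is handled in the paper exactly as you anticipate, by the consistency of the normalizing constants $c_{2m-2}$ and the fact that each face $F$ sits in a lower-dimensional $\mathbb{DH}^{2m-2}$.
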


\begin{proof}
By Remark~\ref{remark_union_polytopes_AdS},
we may assume that $P$ is a good polytope in a finite region in $\mathcal{R}$..
Let $P_t$ be the intersection of $P$ and $x_{2m-1}\le t$,
with $E_t$ the $(2m-1)$-face of $P_t$ on $x_{2m-1}=t$.
Denote $F$ a $(2m-2)$-face of $E_t$ on $x_{2m-1}=t$,
and $\theta_F$ the dihedral angle at $F$.
By Lemma~\ref{lemma_mu_SDF_special_AdS} and \ref{lemma_SDF_special_boundary_AdS} respectively,
we obtain both $\frac{dV_{2m}(P_t)}{dt}$ and $\frac{dV_{\infty,2m}(P_t)}{dt}$ 
as weighted sums of all $\frac{d\theta_F}{dt}$ with coefficients
$\frac{V_{2m-2}(F)}{\kappa (2m-1)}$ and $\frac{V_{\infty,2m-2}(F)}{2m-1}$ respectively.
By (\ref{equation_polytope_volume_infinity}) we have
$\frac{V_{\infty,2m-2}(F)}{2m-1}=\frac{\kappa^{m-1}V_{2m-2}(F)}{2m-1}
=\kappa^m \frac{V_{2m-2}(F)}{\kappa (2m-1)}$.
As both $V_{2m}(P_t)$ and $V_{\infty,2m}(P_t)$ are continuous for $t$ and are 0 when $t\to -\infty$, so
$V_{\infty,2m}(P)=\kappa^m V_{2m}(P)$.
\end{proof}
 
We caution that Theorem~\ref{theorem_polytope_volume_infinity_AdS} does not hold in general 
when $P$ is not a good polytope in $\mathbb{DH}^{2m}_1$,
because when $\mathbb{DH}^{2m}_1$ is treated conformally as $\partial\mathbb{H}^{2m+1}_1$,
$V_{\infty,2m}(P)$ is invariant under conformal transformations of $\partial\mathbb{H}^{2m+1}_1$,
while $V_{2m}(P)$ is not.


\section*{Acknowledgements}

I would like to thank Wei Luo for many helpful suggestions and discussions.

\bibliographystyle{abbrv}  
\bibliography{AdS_arxiv}   

%
%

\end{document}